\g@addto@macro\normalsize{%
\setlength{\abovedisplayskip}{6pt}%
\setlength{\belowdisplayskip}{6pt}%
\setlength{\abovedisplayshortskip}{4pt}%
\setlength{\belowdisplayshortskip}{4pt}%
}
\newcommand{\rr}{\mathbb{R}}
\begin{document}

\RUNAUTHOR{Zhao, Anitescu, and Na}
\RUNTITLE{Continuous-Time Overlapping Schwarz Scheme for LQ Programs}
\TITLE{Overlapping Schwarz Scheme for Linear-Quadratic Programs in Continuous Time}
\ARTICLEAUTHORS{%
\AUTHOR{Hongli Zhao}
\AFF{Department of Statistics, The University of Chicago, \EMAIL{honglizhaobob@uchicago.edu}}
\AUTHOR{Mihai Anitescu}
\AFF{Department of Statistics, The University of Chicago, \EMAIL{anitescu@uchicago.edu}}
\AFF{Mathematics and Computer Science Division, Argonne National Laboratory}
\AUTHOR{Sen Na}
\AFF{School of Industrial and Systems Engineering, Georgia Institute of Technology, \EMAIL{senna@gatech.edu}}
}

\ABSTRACT{
We present an optimize-then-discretize framework for solving linear-quadratic optimal control problems (OCP) governed by time-inhomogeneous ordinary differential equations (ODEs). Our method employs a modified overlapping Schwarz decomposition based on the Pontryagin Minimum Principle, partitioning the temporal domain into overlapping intervals and independently solving Hamiltonian systems in continuous time. We demonstrate that the convergence is ensured by appropriately updating the boundary conditions of the individual Hamiltonian dynamics. The cornerstone of our analysis is to prove that the exponential~decay of \mbox{sensitivity} (EDS) exhibited in discrete-time OCPs carries over to the continuous-time setting. Unlike the discretize-then-optimize~approach,~our method can flexibly incorporate different numerical integration methods for solving the resulting Hamiltonian two-point boundary-value subproblems, including adaptive-time integrators. A numerical experiment~on~a~linear-quadratic~OCP~\mbox{illustrates}~the~practicality of our approach in broad scientific applications.
}

\KEYWORDS{Continuous-time optimal control, overlapping Schwarz decomposition, Pontryagin's minimum principle, exponential decay of sensitivity} 
\maketitle

\section{Introduction}\label{sec:Intro}

Optimal control problems (OCPs) constrained by differential equations are fundamental in a wide range of scientific and engineering applications, including fluid dynamics, biomedical engineering, and aerospace design \cite{Ulbrich2009Analytical, Gunzburger2002Perspectives, Troeltzsch2010Optimal}. These problems require determining an optimal control~function~that~minimizes a given cost functional while satisfying the governing differential constraints. The numerical formulation of OCPs traditionally follows one of the two major paradigms: the direct ``discretize-then-optimize'' approach, and the indirect ``optimize-then-discretize'' approach \cite{Herzog2010Algorithms, Antil2018Brief}.

The direct approach first applies numerical discretization techniques, such as finite element \cite{Brenner2008Mathematical}, finite~difference \cite{LeVeque2007Finite}, or spectral methods \cite{Shen2011Spectral}, to approximate the governing differential equations.
This transforms the original infinite-dimensional problem in the space of solution functions into a finite-dimensional constrained optimization problem, where the control and state variables are indexed by a discrete set of~design~variables. 
Then, to solve the resulting nonlinear program, one can employ constrained optimization techniques, including interior-point methods \cite{Waechter2005implementation} and sequential quadratic programming (SQP) methods \cite{Gill2011Sequential}, which~enforce 

\noindent constraints on both state and control variables. 
Significant~advances~in~direct~\mbox{methods}~have~focused~on~improving numerical efficiency and stability. For example, reduced-space Newton-Krylov solvers \cite{Hernandez2018Newton,Hernandez2019Comparative}~offer~iterative techniques to compute the reduced Karush-Kuhn-Tucker (KKT) matrix-vector products efficiently, while penalty-barrier methods \cite{Neuenhofen2020Integral} introduce regularization terms on the primal-dual problem to~improve~stability. Despite these improvements, direct methods remain inherently dependent on a fully discretized formulation of the OCP, making them increasingly computationally intensive as the discretization level~grows.

In contrast, the indirect approach derives first-order optimality conditions in the continuous setting before applying numerical discretization. This formulation yields necessary optimality conditions in the form of a Hamiltonian system \cite{Craig2008Hamiltonian}. A key component of the indirect method is the adjoint equation, which enables efficient gradient computation with respect to the control variables by solving an auxiliary~system of equations \cite{Eichmeir2020Adjoint}, thereby eliminating the need for explicit numerical differentiation with respect to the discretized design variables.
To solve the resulting Hamiltonian system, state-of-the-art methods employ~numerical techniques that preserve the structure of the continuous problem. 
For example, shooting methods \cite{Betts2010Practical} solve~boundary-value problems by iteratively adjusting the control variables to satisfy the necessary conditions for optimality; and matrix-free adjoint techniques \cite{Hager2000Runge} alleviate memory costs by avoiding the explicit storage of big~Jacobian matrices, making them effective for numerically solving large-scale partial differentiation equations (PDEs). Modern scientific machine learning methods \cite{Cuomo2022Scientific}, which leverage automatic differentiation techniques, are also applicable in this regard.
By preserving the problem’s continuous formulation, optimize-then-discretize approaches offer greater flexibility in handling varying spatial~and~temporal~resolutions, making~them~particularly advantageous in high-fidelity simulations.

A fundamental challenge in ODE/PDE-constrained OCPs is scalability, as the significant number~of~design variables and constraints make classical \textit{centralized} approaches computationally prohibitive for problems~in high dimensions \cite{DelosReyes2015Numerical}. The optimization tends to struggle with memory limitations and long solver runtime when applying centralized methods to large-scale problems. Further, since centralized methods rely~on~a~single powerful processor, their performance also degrades rapidly in resource-constrained computing environments. To address these computational bottlenecks, domain decomposition methods have been developed to divide the computational domain into smaller subdomains, enabling parallel workload distribution across multiple processors \cite{Dolean2015Introduction,Prudencio2005Domain}.

Schwarz alternating methods have long been regarded as effective preconditioners for solving large-scale elliptic and parabolic PDEs, leveraging iterative subdomain solutions to accelerate convergence \cite{Lui1999Schwarz}. More recently, these methods have been adapted to PDE-constrained control problems, where allowing~design~variables to overlap across subdomains has been shown to improve convergence rates \cite{Shin2019Parallel, Na2022Convergence}. A key theoretical insight is that \textit{perturbations at the primal-dual subdomain boundaries exhibit an exponential decay phenomenon as they propagate into the subdomain}. 
This property serves as the foundation for iterative Schwarz-based optimization schemes, ensuring that the concatenation of overlapping subdomain solutions ultimately converge to the full, global optimum under appropriate boundary coordinations \cite{Na2022Convergence}. Building~on~this insight, recent work has refined the local design of the Schwarz method to a globally convergent fast overlapping temporal decomposition (FOTD) method \cite{Na2024Fast}. While these developments have demonstrated the effectiveness of Schwarz-based decomposition in structured settings, existing approaches have purely focused on~discrete OCP formulations and relied on explicit domain partitioning.

In this work, we take the first step toward developing an overlapping Schwarz decomposition framework for general nonlinear PDE-constrained OCPs by studying the linear-quadratic setting. While our method can readily accommodate arbitrary spatial and temporal variables that arise in a PDE, we focus solely on temporal variables and concentrate our efforts on decomposing the temporal domain for clarity. This simplification allows~us~to~clearly~\mbox{distinguish}~the~\mbox{continuous-time}~Schwarz~method~from~\mbox{aforementioned}~\mbox{discrete-time}~counterparts. 
In particular, our method follows the indirect ``optimize-then-discretize'' paradigm and leverages the Pontryagin Minimum Principle (PMP) to partition the time domain into overlapping subdomains,~each~associated with a local OCP formulated as a Hamiltonian two-point boundary value problem in continuous time. At each Schwarz iteration, we specify the boundary values based on the previous iteration; solve~local~OCPs in parallel; and then concatenate the resulting local solutions to form a global trajectory. The method iterates until a gradient-based stopping criterion is satisfied, ensuring convergence to the global optimum.

Performing decomposition directly on continuous-time problems offers several advantages: (i) it preserves the clean analytical structure of the optimality conditions in function space, ensuring consistency with the underlying control problem; (ii) it enables efficient computation of gradient information without requiring full discretization; and (iii) it facilitates the use of flexible solvers with adaptive time-stepping, making it well-suited for stiff or multi-scale systems. In this work, we establish a convergence analysis for the~continuous-time overlapping Schwarz method. We show that under standard controllability and coercivity conditions, \textit{the method converges linearly with a rate that improves exponentially with the overlap size}.

A key component of our analysis is to show that the exponential decay of sensitivity (EDS), previously established in discrete-time OCPs \cite{Na2020Exponential, Shin2022Exponential}, carries over to continuous-time settings. The EDS analysis~draws conceptual connections to the~\emph{turnpike property} in optimal control, which states that in long-horizon~OCPs (without a terminal cost), for any initial state, the optimal trajectory remains close to a steady state for~most of the horizon, deviating only near the boundaries \cite[Theorem 2]{Faulwasser2017turnpike}. More quantitatively, under strict~dissipativity of the cost functional and system controllability conditions, the optimal trajectory~approaches~the~steady~state 

\noindent exponentially fast \cite{Gruene2013Economic,Damm2014Exponential}.
In contrast, our construction of Schwarz subproblems requires an additional~terminal cost due to domain truncation, which characterizes the perturbation from the terminal state to~the~system, a scenario often not covered in classical turnpike studies.
Furthermore, we derive explicit exponential convergence bounds, not with respect to steady states, but rather to the full-horizon optimal trajectory without perturbation. This EDS property of continuous-time OCPs further allows us to establish the linear convergence of the Schwarz methods, with a rate improved exponentially in the overlap size.
Numerical experiments~validate our theory and demonstrate promising performance of our infinite-dimensional Schwarz framework, offering flexibility across numerical solvers  and highlighting its efficiency and structure-preserving properties for general continuous-time OCPs.

\subsection{Organization} 

\hskip-3pt In \Cref{sec:problem-formulation}, we introduce the linear-quadratic OCPs in infinite dimensions, along~with the assumptions and preliminary results. In \Cref{sec:proposedalg}, we present the continuous-time overlapping Schwarz algorithm. 
\Cref{sec:parameterized-ocp} analyzes a generic parameterized linear-quadratic OCP and establishes the exponential decay of sensitivity result that serves as the basis of the convergence analysis of the Schwarz scheme.~In~Section \ref{sec:convergence-analysis}, we prove the pointwise linear convergence of the~Schwarz~scheme~and~show~that~the~\mbox{linear}~rate~improves exponentially with the overlap size. Numerical experiments are presented in \Cref{sec:experiments}, followed~by~conclusions and future directions in \Cref{sec:conclusions}. Some proofs and derivations are deferred to the appendices.

\subsection{Notation}

Throughout the paper, we consider state and control trajectories \( x : [0,T] \rightarrow \mathbb{R}^{n_x} \) and~\( u : [0,T] \rightarrow \mathbb{R}^{n_u} \) defined over the finite time interval $[0, T]$. Let \( \| \cdot \| \) denote the $\ell_2$ norm for vectors and operator norm for matrices. For a symmetric matrix \( A \in \mathbb{R}^{n \times n} \), \( A \succ(\succeq) 0 \) means that \( A \) is positive (semi-)definite. We use \( \langle \cdot, \cdot \rangle \) to denote the norm-induced inner product in appropriate Banach spaces (including the Euclidean space $\mR^n$).
Let \( X \) be a Banach space and \( \mathcal{J} : f\in X \rightarrow \mathbb{R} \) be a functional. We denote~the~total~(Gateaux)~derivative of \( \mathcal{J} \) with respect to \( f \) by \( \delta \mathcal{J}/\delta f \). When \( \mathcal{J} \) depends on multiple functions, we use \( \partial \mathcal{J} / \partial f \) to denote~partial derivatives with respect to \( f \). For further details on functional derivatives in infinite-dimensional spaces, we refer the reader to \cite{Zeidler1990Nonlinear}. The notation for functional derivatives is used throughout to describe \Cref{alg:mainprocedure}, drawing an analogy with its finite-dimensional discrete counterpart.

\section{Problem formulation}\label{sec:problem-formulation} 

Consider the following linear-quadratic OCP of the Bolza-type objective~\cite{Shapiro1966Lagrange}: \vspace{-0.1cm}
\begin{subequations} \label{eqn:linear-quadratic-ocp}
\begin{align}
\min_{u(\cdot),x(\cdot)} \;\;\;\; & \frac{1}{2}\int_0^T \begin{bmatrix}
x(t)\\ u(t)
\end{bmatrix}^\top\begin{bmatrix}
Q(t) & H^\top(t) \\
H(t) & R(t)
\end{bmatrix}\begin{bmatrix}
x(t)\\u(t)
\end{bmatrix} dt + \frac{1}{2}x(T)^\top Q_T x(T), \\
\text{s.t.}\quad\;\; & \dot{x}(t) = A(t)x(t) + B(t)u(t), \quad t\in (0,T], \label{eqn:linear-quadratic-ocp:b}\\
& x(0) = x_0, \label{eqn:linear-quadratic-ocp:c}
\end{align}
\end{subequations}
where $A: [0,T]\rightarrow\mathbb{R}^{n_x\times n_x}$, $B: [0,T]\rightarrow\rr^{n_x\times n_u}$ are the time-varying system matrices; $Q: [0,T]\rightarrow\mathbb{R}^{n_x\times n_x}$, $H: [0,T]\rightarrow\rr^{n_u\times n_x}$, $R: [0,T]\rightarrow\rr^{n_u\times n_u}$ are the running cost matrices; $x_0 \in \mathbb{R}^{n_x}$ is the initial state;~and~$Q_T \in \mathbb{R}^{n_x\times n_x}$ defines the terminal cost.

For Problem \eqref{eqn:linear-quadratic-ocp}, we impose the following regularity conditions. For any mapping~$f: [0, T] \rightarrow\mR^{p\times q}$,~we~let $\|f\|_{\infty} = \sup_{t\in[0, T]}\|f(t)\|$. When $\mR^{p\times q}$ reduces to $\mR^p$ (e.g., for $u(\cdot), x(\cdot)$), $\|\cdot\|$ is understood as the $\ell_2$ norm.

\begin{assumption}  \label{assumption:matrices-are-bounded}
We assume the time-varying matrices in the OCP~\eqref{eqn:linear-quadratic-ocp} satisfy the following conditions:
\begin{enumerate}[label=(\alph*),topsep=0.3em]
\setlength\itemsep{0.4em}
\item The mappings \( A: [0,T] \to \mathbb{R}^{n_x \times n_x} \) and \( B: [0,T] \to \mathbb{R}^{n_x \times n_u} \) are continuous and uniformly bounded: $\|A\|_{\infty}\leq \lambda_A$, $\|B\|_\infty\leq \lambda_B$ for some constants $\lambda_A, \lambda_B>0$ independent of $T$.

\item The mappings $Q: [0,T] \to \mathbb{R}^{n_x \times n_x}$, $H: [0,T] \to \mathbb{R}^{n_u \times n_x}$, and $R: [0,T] \to \mathbb{R}^{n_u \times n_u}$ are continuous and uniformly bounded: $\|Q\|_\infty\leq \lambda_Q$, $\|H\|_\infty\leq \lambda_H$, $\|R\|_\infty\leq \lambda_R$ for some constants \mbox{$\lambda_Q, \lambda_H, \lambda_R>0$}~independent of $T$.
\end{enumerate}
\end{assumption}

We should mention that the uniformity of the boundedness, i.e., the independence of the constants from the terminal time $T$, is not required if we consider $T$ to be fixed. Our analysis of the sensitivity of OCPs~and~the Schwarz method still holds without the uniformity; however, this stronger boundedness~condition will~indicate that the convergence rate of the method, determined by the boundedness constants, is independent of $T$. This uniform result is particularly desirable for considered long-horizon OCPs (i.e., as $T$ becomes large) \cite{Na2020Exponential, Na2022Convergence}.

The regularity conditions in \Cref{assumption:matrices-are-bounded} ensure the existence and uniqueness of the \textit{mild} state solution $x(\cdot)$ for any bounded control input $u(\cdot)$ \cite[Chapter 5.1]{Pazy1983Semigroups}. In particular, under \Cref{assumption:matrices-are-bounded}, for any bounded control mapping $u^*: [0, T] \rightarrow \mathbb{R}^{n_u}$, we define the corresponding state trajectory $x^*: [0, T] \rightarrow \mathbb{R}^{n_x}$ as the continuous function:
\begin{equation} \label{eqn:mild-solution}
x^*(t) = \Phi_A(t, 0)x_0 + \int_0^t \Phi_A(t, s) B(s) u^*(s)\,ds,\quad\quad t\in[0, T],
\end{equation}
where $\Phi_A(t, s)$ is the \textit{linear evolution operator} associated with the homogeneous system $\dot{x}(t) = A(t)x(t)$,~satisfying $x(t) = \Phi_A(t, s) x(s)$. 
The representation of \eqref{eqn:mild-solution} is commonly referred to as the \emph{mild~solution} of the~state equation. The existence, uniqueness, and continuity of $ \Phi_A(\cdot,\cdot)$ follow from classical results in semigroup theory under the continuity of $A(t)$; see~\cite[Theorems 5.1 and 5.2]{Pazy1983Semigroups}.

The mild solution \eqref{eqn:mild-solution} will serve as the foundation for analyzing how control inputs influence the state~trajectory over time, which leads to the following notion of controllability.

\begin{definition}[Complete Controllability]

\hskip-0.25cm A state $x_{t_0} \in \rr^{n_x}$ is~said~to~be~\emph{\mbox{controllable}}~at~time~\mbox{$t_0\ge 0$}~if~there exist $t_1> t_0$ and a control function $\bar{u}: [t_0, t_1] \rightarrow\rr^{n_u}$, depending on $t_0$ and $x_{t_0}$, such that the mild solution of the state equation \eqref{eqn:linear-quadratic-ocp:b} with the initialization $x(t_0)=x_{t_0}$ and the control input $\bar{u}$ satisfies $x(t_1)=0$. That is,
\begin{equation*}
\Phi_A(t_1, t_0)x_{t_0} + \int_{t_0}^{t_1} \Phi_A(t_1, s) B(s) \bar{u}(s)\,ds = 0.
\end{equation*}
The state system~\eqref{eqn:linear-quadratic-ocp:b} is said to be \emph{completely controllable} if every state $x_{t_0}$ is controllable at all $t_0\ge 0$.
\end{definition}

We recall a fundamental result on controllability, which will be used frequently in our later analysis.

\begin{lemma}[Controllability Gramian] \label{lemma:controllabilitygramian}

The state system defined by the pair $(A, B)$ in \eqref{eqn:linear-quadratic-ocp:b} is completely controllable on an interval $[t_0, t_1] \subseteq [0, T]$ if and only if the symmetric matrix
\begin{equation} \label{eqn:definegrammain}
W_{A,B}(t_0, t_1) := \int_{t_0}^{t_1} \Phi_A(t_0, s) B(s) B^\top(s) \Phi_A^\top(t_0, s)\,ds \in \mR^{n_x\times n_x}
\end{equation}
is positive definite. The matrix \( W_{A,B} \) is referred to as the controllability Gramian for the pair \( (A, B) \).~Furthermore, when \( W_{A,B}(t_0, t_1) \) is nonsingular, an explicit control that steers any state $x_{t_0}$ at time $t_0$ to the~origin~at time $t_1$ is given by
\begin{equation*} 
u(t; t_0, x_{t_0}) = 
\begin{cases}
-B^\top(t)\Phi_A^\top(t_0, t)\,W_{A,B}^{-1}(t_0, t_1)\,x_{t_0}, & t \in [t_0, t_1], \\
0, & t > t_1.
\end{cases}
\end{equation*}
\end{lemma}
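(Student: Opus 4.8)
The plan is to prove the classical controllability Gramian characterization in two directions, then exhibit the explicit steering control.

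\textbf{Sufficiency via the explicit control.} Assuming $W_{A,B}(t_0,t_1)\succ 0$, I would take the candidate control $u(t;t_0,x_{t_0}) = -B^\top(t)\Phi_A^\top(t_0,t)W_{A,B}^{-1}(t_0,t_1)x_{t_0}$ on $[t_0,t_1]$ and substitute it into the mild-solution formula \eqref{eqn:mild-solution} initialized at $x(t_0)=x_{t_0}$. The key manipulation is to use the cocycle (semigroup) identity $\Phi_A(t_1,s) = \Phi_A(t_1,t_0)\Phi_A(t_0,s)$, which lets me pull $\Phi_A(t_1,t_0)$ outside the integral:
\begin{equation*}
x(t_1) = \Phi_A(t_1,t_0)\Bigl[x_{t_0} - \Bigl(\int_{t_0}^{t_1}\Phi_A(t_0,s)B(s)B^\top(s)\Phi_A^\top(t_0,s)\,ds\Bigr)W_{A,B}^{-1}(t_0,t_1)x_{t_0}\Bigr] = \Phi_A(t_1,t_0)\bigl[x_{t_0} - x_{t_0}\bigr] = 0.
\end{equation*}
This simultaneously proves that positive definiteness of the Gramian implies complete controllability on $[t_0,t_1]$ (any $x_{t_0}$ is steered to the origin) and verifies the explicit formula claimed in the last sentence of the lemma; boundedness of this control on the compact interval follows from continuity of $B$ and $\Phi_A$ plus invertibility of $W_{A,B}$.

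\textbf{Necessity.} For the converse I would argue by contraposition: suppose $W_{A,B}(t_0,t_1)$ is singular, so there is $0\neq v\in\mathbb{R}^{n_x}$ with $v^\top W_{A,B}(t_0,t_1)v = 0$. Since the integrand $\Phi_A(t_0,s)B(s)B^\top(s)\Phi_A^\top(t_0,s)$ is positive semidefinite and continuous in $s$, this forces $B^\top(s)\Phi_A^\top(t_0,s)v = 0$ for all $s\in[t_0,t_1]$, i.e., $v^\top\Phi_A(t_0,s)B(s)=0$. Now pick the initial state $x_{t_0}$ so that $\Phi_A(t_1,t_0)x_{t_0}$ has a nonzero component ``in the $v$ direction'' after transport — concretely, set $x_{t_0} = \Phi_A(t_0,t_1)^\top$-pullback of $v$, or more cleanly test the reachability relation: for \emph{any} control $\bar u$, applying $v^\top\Phi_A(t_0,t_1)$ to the identity $\Phi_A(t_1,t_0)x_{t_0}+\int_{t_0}^{t_1}\Phi_A(t_1,s)B(s)\bar u(s)\,ds$ and using $\Phi_A(t_0,t_1)\Phi_A(t_1,s)=\Phi_A(t_0,s)$ kills the integral term, leaving $v^\top x_{t_0}$. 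Choosing $x_{t_0}$ with $v^\top x_{t_0}\neq 0$ shows $x(t_1)=0$ is unachievable, so the system is not completely controllable on $[t_0,t_1]$. Symmetry of $W_{A,B}$ is immediate from the symmetric form of the integrand.

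\textbf{Anticipated obstacle.} The routine computations above are not the difficulty; the main point requiring care is the continuity-based argument that a zero quadratic form under the integral forces the integrand curve to vanish pointwise, and the correct use of the evolution-operator composition rules $\Phi_A(t,r)\Phi_A(r,s)=\Phi_A(t,s)$ and $\Phi_A(t,s)^{-1}=\Phi_A(s,t)$ (valid in this linear, finite-dimensional time-varying setting by the classical semigroup results cited after \eqref{eqn:mild-solution}). I would state these composition identities explicitly at the outset and then the rest is bookkeeping. I expect to present sufficiency and the explicit-control verification together, then necessity, keeping each to a few lines.
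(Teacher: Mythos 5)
Your argument is correct and complete. Note, however, that the paper does not actually prove this lemma: its ``proof'' is a one-line citation to \cite{Borkar2024Contributionsa}, so there is no in-paper argument to compare against. What you have written is the standard self-contained textbook proof of the cited result, and both directions check out: for sufficiency, substituting the candidate control into the mild-solution formula \eqref{eqn:mild-solution} and factoring out $\Phi_A(t_1,t_0)$ via the cocycle identity $\Phi_A(t_1,s)=\Phi_A(t_1,t_0)\Phi_A(t_0,s)$ makes the Gramian cancel against its inverse exactly as you say; for necessity, the observation that $v^\top W_{A,B}(t_0,t_1)v=\int_{t_0}^{t_1}\|B^\top(s)\Phi_A^\top(t_0,s)v\|^2\,ds$ combined with continuity of the integrand forces $v^\top\Phi_A(t_0,s)B(s)=0$ pointwise, and applying $v^\top\Phi_A(t_0,t_1)$ to the steering identity annihilates the integral term and leaves $v^\top x_{t_0}$, so any $x_{t_0}$ with $v^\top x_{t_0}\neq 0$ (e.g., $x_{t_0}=v$) is not steerable to the origin at $t_1$. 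The only cosmetic point is that you should commit to the second (``cleaner'') version of the necessity argument rather than presenting both alternatives; the first phrasing about a ``pullback of $v$'' is vague, while the second is airtight. The composition rules $\Phi_A(t,r)\Phi_A(r,s)=\Phi_A(t,s)$ and $\Phi_A(t,s)^{-1}=\Phi_A(s,t)$ that you flag are indeed the only nontrivial ingredients, and they hold here by the classical evolution-operator results the paper cites after \eqref{eqn:mild-solution}.
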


\begin{proof}
The results directly follow from \cite[Proposition 5.2, (5.9), and (5.10)]{Borkar2024Contributionsa}.	
\end{proof}

While complete controllability ensures that the system state can be driven to the origin over some finite~time horizon, our analysis requires a quantitative refinement on the time required to achieve \mbox{controllability}.~Specifically, we impose explicit lower and upper bounds on both the controllability Gramian and the evolution~operator associated with the linear dynamics.

\begin{assumption}[Uniform Complete Controllability (UCC)] \label{assumption:uniform-controllability}
\hskip-0.25cm We assume there exist a fixed constant~$\sigma\in(0, T)$ and some positive constants $\alpha_0(\sigma), \alpha_1(\sigma), \beta_0(\sigma), \beta_1(\sigma) > 0$ (may depend on $\sigma$), such that for every $t_0\in[0, T - \sigma]$, there exists a time $t_1 = t_1(t_0) \in[t_0, t_0 + \sigma]\subseteq[0, T]$ for which the following bounds hold:
\begin{equation}\label{eqn:uniformboundednessofgramian}
\alpha_0(\sigma) I \preceq W_{A,B}(t_0, t_1) \preceq \alpha_1(\sigma) I \quad\; \text{ and } \quad\;
\beta_0(\sigma) I \preceq \Phi_A(t_1, t_0) W_{A,B}(t_0, t_1) \Phi_A^\top(t_1, t_0) \preceq \beta_1(\sigma) I.
\end{equation}

\end{assumption}

\Cref{assumption:uniform-controllability} is introduced in \cite[Definition 5.13]{Borkar2024Contributionsa} and is standard in the numerical control literature;~see,~for example,~\cite[Assumption 2]{Na2022Convergence} and~\cite[Definition 2]{Huerta2024Controllability}. It essentially states that the system, at any time $t_0$~and~state $x_{t_0}$, can be controlled within an interval of length~$\sigma$. Notably, we do not require controllability on the terminal segment $[T - \sigma, T]$ since we consider a finite- (but long-) horizon problem.

To ensure the well-posedness of Problem \eqref{eqn:linear-quadratic-ocp}, we impose the following coercivity conditions on the cost functional. We abuse the constant $\lambda_Q$ from Assumption \ref{assumption:matrices-are-bounded} for notational consistency.

\begin{assumption}\label{assumption:matrices-are-bounded-below}
There exist uniform constants $\gamma_R, \gamma_Q > 0$ independent of $T$ such that
\begin{equation*}
R(t) \succeq \gamma_R I, \quad \quad
Q(t) - H^\top(t) R^{-1}(t) H(t) \succeq \gamma_Q I,\quad\quad \forall t\in[0, T].
\end{equation*}
In addition, the terminal cost matrix satisfies $\gamma_Q I \preceq Q_T\preceq \lambda_Q I$ for some constants $\lambda_Q\geq \gamma_Q> 0$.
\end{assumption}

\Cref{assumption:matrices-are-bounded-below} guarantees the strong convexity of the cost functional, thereby ensuring the existence and uniqueness of the solution to OCP~\eqref{eqn:linear-quadratic-ocp}.

\begin{remark}
Similar to Assumption \ref{assumption:matrices-are-bounded}, the uniformity of the constants is not required when we treat~$T$ as a fixed constant, while the uniformity indicates that our results, including the convergence rate, are independent of $T$. We highlight that, under Assumption \ref{assumption:matrices-are-bounded}(b), Assumption \ref{assumption:matrices-are-bounded-below} is equivalent to what is commonly assumed in the literature (e.g.,~\cite[Assumption 3.1]{Na2020Exponential}, \cite[Assumption 2.1]{Xu2018Exponentially}), where a constant $\gamma' > 0$ independent~of~$T$ is assumed such that
\begin{equation}\label{nequ:1}
\begin{bmatrix}
Q(t) & H^\top(t) \\
H(t) & R(t)
\end{bmatrix} \succeq \gamma'I,\quad \forall t\in [0,T].
\end{equation}
In particular, the condition \eqref{nequ:1} implies that
\begin{equation*}
R(t) \succeq \gamma' I,\quad\quad \begin{bmatrix}
Q(t) - \gamma' I & H^\top(t) \\
H(t) & R(t)
\end{bmatrix} \succeq 0,\quad\quad \forall t\in[0, T],
\end{equation*}
suggesting that Assumption \ref{assumption:matrices-are-bounded-below} holds with $\gamma_R = \gamma_Q = \gamma'$ \cite[Theorem 1.12]{Zhang2005Schur}. On the other hand, Assumption~\ref{assumption:matrices-are-bounded-below} implies for any $\gamma'\leq \epsilon\gamma_R$ with $\epsilon\in(0, 1)$ that $R(t) - \gamma' I \succeq (1-\epsilon) R(t)$. Thus, we have 
\begin{multline*}
Q(t) - H^\top(t)(R(t) - \gamma' I)^{-1}H(t) \succeq Q(t) -  H^\top(t)R(t)^{-1}H(t)/(1-\epsilon) \\ \succeq Q(t) - H^\top(t)R(t)^{-1}H(t) - \frac{\epsilon}{1-\epsilon}\frac{\lambda_H^2}{\gamma_R}I \succeq \rbr{\gamma_Q - \frac{\epsilon}{1-\epsilon}\frac{\lambda_H^2}{\gamma_R}} I.
\end{multline*}
Thus, let $\epsilon$ be small enough such that $\gamma_Q - \frac{\epsilon}{1-\epsilon}\frac{\lambda_H^2}{\gamma_R} \geq \epsilon \gamma_Q$, then \eqref{nequ:1} holds with $\gamma' = \epsilon\min\{\gamma_Q, \gamma_R\}$. Without Assumption \ref{assumption:matrices-are-bounded}, Assumption \ref{assumption:matrices-are-bounded-below} is strictly weaker than \eqref{nequ:1} since one can see that $Q(t) = t^2+1$, $H(t)=t$, $R(t)=1$ satisfies Assumption \ref{assumption:matrices-are-bounded-below} but not \eqref{nequ:1} (the smallest eigenvalue of the quadratic matrix goes to zero as $t\rightarrow\infty$).

\end{remark}

Given the above assumptions, we state the first-order optimality conditions for the linear-quadratic time-varying OCP \eqref{eqn:linear-quadratic-ocp}. These conditions will be later used to truncate time intervals and formulate subproblems~by specifying appropriate boundary conditions.

\begin{theorem}[Pontryagin's Minimum Principle (PMP)] \label{theorem:pontryagin}
	
\hskip-0.25cm Suppose Assumptions \ref{assumption:matrices-are-bounded} and \ref{assumption:matrices-are-bounded-below} hold for the OCP in \eqref{eqn:linear-quadratic-ocp}. Then, there is a unique, continuous optimal control solution $u^*: [0, T] \rightarrow \mathbb{R}^{n_u}$ along with a unique, continuously differentiable optimal state solution $x^*: [0, T] \rightarrow \mathbb{R}^{n_x}$ that solves \eqref{eqn:linear-quadratic-ocp}. Furthermore, we define the Hamiltonian as
\begin{equation} \label{eqn:problem-hamiltonian}
\mathcal{H}(t, x(t), u(t), \lambda(t)) := 
\frac12
\begin{bmatrix}
x(t)\\ u(t)
\end{bmatrix}^\top\begin{bmatrix}
Q(t) & H^\top(t) \\
H(t) & R(t)
\end{bmatrix}\begin{bmatrix}
x(t)\\u(t)
\end{bmatrix}+
\lambda(t)^{\top}(A(t)x(t) + B(t)u(t)).
\end{equation}
Then, there exists a unique, continuously differentiable adjoint (costate) solution $\lambda^*: [0,T] \rightarrow \rr^{n_x}$ such that \vskip-0.4cm
\begin{subequations}\label{equ:PMP}
\begin{align}
\dot{x}^*(t) & = \nabla_\lambda \mathcal{H}(t, x^*(t), u^*(t), \lambda^*(t)),  & x^*(0) &= x_0, \label{subeqn:pmp-state} \\
\dot{\lambda}^*(t) &= -\nabla_x \mathcal{H}(t, x^*(t), u^*(t), \lambda^*(t)),  & \lambda^*(T) &= Q_Tx^*(T), \label{subeqn:pmp-adjoint}\\
&\hskip-2cm  \mathcal{H}(t, x^*(t), u^*(t), \lambda^*(t)) \le \mathcal{H}(t, x^*(t), u, \lambda^*(t)), & & \hskip-1cm  \forall u \in \mathbb{R}^{n_u}, \; \forall t\in[0, T]. \label{eqn:optimalcontrolminimizes}
\end{align}
\end{subequations}
Conversely, if the triple $(x^*, u^*, \lambda^*)$ with $x^*$, $\lambda^*$ being continuously differentiable and $u^*$ being continuous satisfies the conditions \eqref{equ:PMP}, then $(x^*, u^*)$ is the unique solution to OCP~\eqref{eqn:linear-quadratic-ocp}.
\end{theorem}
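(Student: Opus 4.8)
The plan is to prove the Pontryagin Minimum Principle for the linear-quadratic OCP~\eqref{eqn:linear-quadratic-ocp} in three stages: (i) existence and uniqueness of an optimal pair $(x^*,u^*)$ via strong convexity of the reduced cost functional; (ii) derivation of the first-order optimality conditions~\eqref{equ:PMP} via a variational/adjoint argument; and (iii) the converse, that any triple satisfying~\eqref{equ:PMP} is optimal. For stage (i), I would first use the mild-solution formula~\eqref{eqn:mild-solution} to eliminate the state and view the problem as minimizing a functional $\mathcal{J}(u)$ over $u\in L^2([0,T];\mathbb{R}^{n_u})$. Since $x$ depends affinely on $u$ through the bounded linear operator in~\eqref{eqn:mild-solution} (boundedness follows from \Cref{assumption:matrices-are-bounded} via Gr\"onwall-type estimates on $\Phi_A$), the functional $\mathcal{J}$ is quadratic. \Cref{assumption:matrices-are-bounded-below} gives $R(t)\succeq\gamma_R I$ together with the Schur-complement positivity of the running cost block and $Q_T\succeq\gamma_Q I$, which I would combine to show that the Hessian of $\mathcal{J}$ is coercive, i.e. $\mathcal{J}(u)\ge \tfrac{c}{2}\|u\|_{L^2}^2 + (\text{lower order})$ for some $c>0$ independent of $T$; hence $\mathcal{J}$ is strongly convex, lower semicontinuous, and coercive, so a unique minimizer $u^*$ exists in $L^2$. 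Continuity of $u^*$ will then be read off after the fact from the optimality condition (see below), and continuous differentiability of $x^*$ follows from~\eqref{eqn:linear-quadratic-ocp:b} once $u^*$ is continuous.

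For stage (ii), I would introduce the costate $\lambda^*$ via the adjoint equation $\dot\lambda^*(t) = -\nabla_x\mathcal{H} = -Q(t)x^*(t) - H^\top(t)u^*(t) - A^\top(t)\lambda^*(t)$ with terminal condition $\lambda^*(T)=Q_Tx^*(T)$; this linear ODE has a unique continuously differentiable solution under \Cref{assumption:matrices-are-bounded}. Then, for an arbitrary admissible perturbation $u^* + \epsilon\, v$ with corresponding state variation $y$ solving $\dot y = A y + B v$, $y(0)=0$, I would compute $\tfrac{d}{d\epsilon}\big|_{\epsilon=0}\mathcal{J}(u^*+\epsilon v)$. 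Integrating by parts the term $\int_0^T \lambda^{*\top}(\dot y - Ay - Bv)\,dt = 0$ and using the adjoint equation and terminal condition, the state-variation terms cancel, leaving $\langle \nabla_u\mathcal{H}, v\rangle_{L^2} = \langle R(t)u^*(t) + H(t)x^*(t) + B^\top(t)\lambda^*(t),\, v\rangle_{L^2}$. First-order optimality of $u^*$ forces this to vanish for all $v\in L^2$, hence $R(t)u^*(t) + H(t)x^*(t) + B^\top(t)\lambda^*(t) = 0$ a.e.; since $R(t)\succeq\gamma_R I$ is invertible with $R^{-1}$, $x^*$, $\lambda^*$ continuous, this determines $u^*(t) = -R^{-1}(t)(H(t)x^*(t)+B^\top(t)\lambda^*(t))$ continuously, upgrading $u^*$ to a genuine continuous function. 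The pointwise minimization~\eqref{eqn:optimalcontrolminimizes} then follows because $u\mapsto\mathcal{H}(t,x^*,u,\lambda^*)$ is strongly convex in $u$ (its Hessian is $R(t)\succeq\gamma_R I$) and the stationarity equation above is exactly $\nabla_u\mathcal{H}=0$ at $u=u^*$, so $u^*(t)$ is its unique global minimizer. Equations~\eqref{subeqn:pmp-state}--\eqref{subeqn:pmp-adjoint} are just restatements of the dynamics and the adjoint construction.

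For stage (iii), the converse, I would observe that given any triple with the stated regularity satisfying~\eqref{equ:PMP}, the minimization condition~\eqref{eqn:optimalcontrolminimizes} plus strong convexity of $\mathcal{H}$ in $u$ yields $u^*(t) = \arg\min_u\mathcal{H}(t,x^*(t),u,\lambda^*(t))$, hence the stationarity relation $R(t)u^*(t)+H(t)x^*(t)+B^\top(t)\lambda^*(t)=0$; reversing the integration-by-parts computation from stage (ii) shows that the Gateaux derivative of $\mathcal{J}$ at $u^*$ vanishes, and strong convexity of $\mathcal{J}$ promotes this stationary point to the unique global minimizer, which by uniqueness from stage (i) must coincide with the optimal pair. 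I expect the main obstacle to be stage (i): carefully establishing the \emph{uniform} (in $T$) coercivity constant for $\mathcal{J}$. The naive bound using $\Phi_A$ over $[0,T]$ is exponential in $T$ and destroys uniformity; the standard fix is not to eliminate the state globally but to use \Cref{assumption:matrices-are-bounded-below} directly on the integrand --- writing the running cost as $\tfrac12(u - (-R^{-1}Hx))^\top R (u-(-R^{-1}Hx)) + \tfrac12 x^\top(Q - H^\top R^{-1}H)x$ via a Schur/completion-of-squares argument --- so that coercivity in $u$ is manifest pointwise with constant $\gamma_R$ and the remaining $x$-cost is nonnegative. Making this rigorous in the infinite-dimensional $L^2$ setting (weak lower semicontinuity, existence of the minimizer, and the validity of exchanging differentiation and integration when computing the variation) is the delicate part; the variational computation itself and the converse are routine once the functional-analytic framework is set up, and I would likely cite~\cite{Zeidler1990Nonlinear} and~\cite{Pazy1983Semigroups} for the standard pieces.
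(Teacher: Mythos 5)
Your proposal is correct, and it is in substance the same route the paper takes: the paper offers no argument of its own, deferring entirely to \cite{Anderson2007Optimal} for existence and uniqueness of the LQ solution and to \cite{Chachuat2007Nonlinear} for the necessary and (Mangasarian) sufficient conditions, and the direct-method/variational/convex-sufficiency argument you outline is precisely the standard content of those references. The one step to tighten is the coercivity bookkeeping: completion of squares gives pointwise coercivity only in the shifted variable $u+R^{-1}Hx$, so before concluding $\mathcal{J}(u)\ge \tfrac{c}{2}\|u\|_{L^2}^2+(\text{lower order})$ you must absorb the cross term using the margin $\gamma_Q$ from $Q-H^\top R^{-1}H\succeq\gamma_Q I$ (yielding $c$ depending on $\gamma_R,\gamma_Q,\lambda_H$) --- exactly the computation the paper carries out when showing Assumption~\ref{assumption:matrices-are-bounded-below} is equivalent to \eqref{nequ:1}.
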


\begin{proof}
We refer the reader to the discussions of~\cite[Sections 2.3 and 3.4]{Anderson2007Optimal} for the existence and uniqueness of the optimal solution to the linear-quadratic OCP~\eqref{eqn:linear-quadratic-ocp}; also refer to \cite[Theorems 3.9 and 3.11, and Remark 3.12]{Chachuat2007Nonlinear} for the necessary and (Mangasarian) sufficient conditions of OCPs.
\end{proof}

\Cref{theorem:pontryagin} generalizes the KKT conditions of nonlinear optimization problems to the setting of infinite-dimensional problems. 
In particular, \Cref{theorem:pontryagin} provides an \textit{open-loop} characterization of the optimality: for each $t$, the optimal control \( u^*(t) \) is not expressed as a function of the state \( x^*(t) \), but rather satisfies~the~minimization conditions derived from the Hamiltonian. This form is particularly useful in the design and analysis of approximation algorithms. In contrast, the \textit{closed-loop} characterization, where the control is expressed explicitly as a function of the current state, is useful for studying stability. Such a~formulation typically arises from the dynamic programming principle, which we now briefly review.

\begin{theorem}[Hamilton-Jacobi-Bellman (HJB) Equation] \label{theorem:hjbequation} 
For each \( t \in [0,T] \), we define the optimal cost-to-go function as
\begin{equation}\label{eqn:optimal-cost-to-go}
J^*(t, z) := \min_{u(\cdot),x(\cdot)} \;\;\;\;
\frac{1}{2}\int_t^T \begin{bmatrix}
x(s)\\ u(s)
\end{bmatrix}^\top\begin{bmatrix}
Q(s) & H^\top(s) \\
H(s) & R(s)
\end{bmatrix}\begin{bmatrix}
x(s)\\u(s)
\end{bmatrix} ds + \frac{1}{2}x(T)^\top Q_T x(T),
\end{equation}
where the state trajectory \( x(\cdot) \) satisfies the linear dynamics $\dot{x}(s) = A(s)x(s)+B(s)u(s)$, $s \in (t, T]$ and~$x(t) = z$. Let the Hamiltonian be defined as in~\eqref{eqn:problem-hamiltonian}. Under Assumptions \ref{assumption:matrices-are-bounded} and \ref{assumption:matrices-are-bounded-below}, $J^*(t,z)$ is the solution of the Hamilton-Jacobi-Bellman (HJB) partial differential equation:
\begin{subequations}\label{eqn:hjb-equation}
\begin{align}
\frac{\partial J^*}{\partial t}(t, z) + \min_{u \in \mathbb{R}^{n_u}} \mathcal{H}(t, z, u, \nabla_z J^*(t, z)) &= 0, \label{subeqn:minimization-pde}\\
J^*(T, z) &= \frac12z^{\top}Q_Tz. \label{subeqn:minimization-pde1}
\end{align}
\end{subequations}
	
\end{theorem}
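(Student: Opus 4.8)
The plan is to establish the HJB equation for the linear-quadratic problem by exploiting the quadratic structure to guess an explicit ansatz for the value function, and then verify it solves the PDE. First I would posit that $J^*(t,z) = \tfrac12 z^\top P(t) z$ for some symmetric matrix-valued function $P:[0,T]\to\mathbb{R}^{n_x\times n_x}$, motivated by the fact that the running and terminal costs are quadratic, the dynamics are linear, and the minimization over $u$ of a quadratic in $u$ (with $R(t)\succeq\gamma_R I\succ 0$ by Assumption~\ref{assumption:matrices-are-bounded-below}, hence invertible) again yields a quadratic expression. Substituting the ansatz into \eqref{subeqn:minimization-pde}, the inner minimization $\min_u \mathcal{H}(t,z,u,P(t)z)$ is a strictly convex quadratic in $u$; carrying out the minimization explicitly gives the optimal feedback $u^*(t) = -R^{-1}(t)\bigl(H(t) + B^\top(t) P(t)\bigr) z$, and plugging this back collapses \eqref{subeqn:minimization-pde} into the requirement that $P(\cdot)$ solve the differential Riccati equation
\begin{equation*}
\dot{P}(t) + P(t)A(t) + A^\top(t)P(t) + Q(t) - \bigl(H(t)+B^\top(t)P(t)\bigr)^\top R^{-1}(t)\bigl(H(t)+B^\top(t)P(t)\bigr) = 0,
\end{equation*}
with terminal condition $P(T) = Q_T$, which matches \eqref{subeqn:minimization-pde1} under the ansatz.

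The remaining analytic content splits into two pieces. First, I would invoke standard existence theory for the differential Riccati equation: under Assumptions~\ref{assumption:matrices-are-bounded} and~\ref{assumption:matrices-are-bounded-below} the right-hand side is locally Lipschitz in $P$ and continuous in $t$, so a unique local solution exists backward from $t=T$; the coercivity conditions (together with $Q_T\succeq\gamma_Q I$) guarantee $P(t)\succeq 0$ along the solution and, via a comparison/monotonicity argument for Riccati flows, an a priori bound $\|P(t)\|\le C$ uniform in $t$ (and in $T$, given the uniformity of the constants in the assumptions), which precludes finite-time blow-up and extends the solution to all of $[0,T]$. This yields a $C^1$ function $P$, hence a $C^1$ candidate $J^*(t,z)=\tfrac12 z^\top P(t)z$, which by construction satisfies the HJB PDE \eqref{eqn:hjb-equation}.

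Second, I would close the loop with a verification (dynamic programming) argument showing this candidate equals the optimal cost-to-go defined in \eqref{eqn:optimal-cost-to-go}: for any admissible pair $(x(\cdot),u(\cdot))$ with $x(t)=z$, differentiate $s\mapsto V(s,x(s))$ where $V(s,z):=\tfrac12 z^\top P(s)z$, use that $V$ satisfies the HJB inequality $\partial_s V + \mathcal{H}(s,x(s),u(s),\nabla_z V)\ge 0$ pointwise (equality at the minimizing control), integrate from $t$ to $T$, and use $V(T,z)=\tfrac12 z^\top Q_T z$ to conclude $V(t,z)\le$ (cost of any admissible trajectory), with equality attained by the feedback control $u^*$ above, whose closed-loop dynamics $\dot x = (A - BR^{-1}(H+B^\top P))x$ are linear with continuous (hence integrable) coefficients and thus admit a unique admissible solution. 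Therefore $V=J^*$ and $J^*$ solves \eqref{eqn:hjb-equation}. Alternatively, one may simply cite a standard reference (e.g., \cite{Anderson2007Optimal}) for the classical LQ–HJB correspondence, since the problem is the textbook finite-horizon LQR with cross terms.

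I expect the main obstacle to be the \emph{a priori boundedness and global existence} of the Riccati solution $P(t)$ on the full interval with constants independent of $T$ — i.e., ruling out finite escape time while tracking the uniformity needed for the long-horizon analysis. The convexity/minimization and the verification step are routine given the quadratic structure; the Riccati well-posedness, in contrast, is where the coercivity Assumption~\ref{assumption:matrices-are-bounded-below} and the uniform bounds of Assumption~\ref{assumption:matrices-are-bounded} must be used carefully (for instance, bounding $P$ from above by the cost of the zero control and from below by $0$, then applying Grönwall-type estimates to the Riccati flow).
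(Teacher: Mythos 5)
Your proposal is correct, but it does substantially more work than the paper, whose ``proof'' of \Cref{theorem:hjbequation} is only a citation to the classical LQ--HJB correspondence in \cite{Anderson2007Optimal} (you anticipated this as your fallback option). Your constructive route --- quadratic ansatz $J^*(t,z)=\tfrac12 z^\top P(t)z$, explicit minimization in $u$ yielding the feedback $u^*=-R^{-1}(H+B^\top P)z$, reduction of \eqref{subeqn:minimization-pde} to a differential Riccati equation with $P(T)=Q_T$, and a verification argument comparing $V(s,x(s))$ along arbitrary admissible trajectories --- is exactly the mechanism the paper deploys \emph{later} for the more general parameterized problem: Appendix~\ref{appendix:closed-loop-sensitivity-ocp} posits the (affine-extended) ansatz and derives the Riccati equation \eqref{eqn:riccati-equation}, and \Cref{lemma:riccati-positive-definite} and \Cref{proposition:mainprop} handle existence, positive definiteness, and two-sided bounds of the Riccati solution. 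So your proof is a self-contained special case of machinery the paper builds anyway, which buys transparency at the cost of redundancy. You also correctly identify global existence of $P$ as the only nontrivial analytic point; your plan (lower bound $P\succeq 0$ from coercivity, upper bound from the cost of a suboptimal control, hence no finite escape time on $[0,T]$) is sound. One small caveat: your parenthetical claim that the a priori bound on $\|P(t)\|$ is uniform in $T$ does not follow from Assumptions \ref{assumption:matrices-are-bounded} and \ref{assumption:matrices-are-bounded-below} alone --- the zero-control upper bound grows like $e^{c(T-t)}$, and the paper needs the controllability Assumption \ref{assumption:uniform-controllability} (via \Cref{lemma:ucc-carries-over} and \Cref{lemma:dual-controllability}) to obtain $T$-independence in \Cref{proposition:mainprop}. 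Since \Cref{theorem:hjbequation} only requires existence on a fixed $[0,T]$, this overclaim is harmless here, but it should be dropped or qualified.
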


\begin{proof}
See \cite[Sections 2.2 and 2.3]{Anderson2007Optimal} for the argument. We also refer the readers to the literature in optimal control theory, including~\cite{Athans2013Optimal, Bertsekas2005Dynamic,Hinze2006optimal,Troeltzsch2010Optimal, Bokanowski2021Relationship}.
\end{proof}

The upcoming section introduces the subproblem formulation used in our Schwarz decomposition, along with its theoretical justification, leveraging the optimality theorems presented above.

\section{Continuous-Time Overlapping Schwarz Decomposition}  \label{sec:proposedalg}  

In this section, we introduce a continuous-time overlapping Schwarz decomposition scheme for solving linear-quadratic OCPs. We decompose the~time domain into overlapping subdomains and formulate a subproblem for each subdomain. The subproblem~formulation depends on properly chosen boundary conditions. 
The Schwarz method enables parallel computation and achieves convergence to the global solution by iteratively updating these boundary conditions. 
Our subproblem formulation leverages the PMP in \Cref{theorem:pontryagin}, and we offer the flexibility to incorporate~different numerical solvers for solving the continuous-time subproblems.

We begin by partitioning the time domain $[0,T]$ into $m$ subdomains using a sequence of time points:
\begin{equation*}
0 = t_0 < t_1 < \cdots < t_m = T, \quad \text{so that} \quad [0,T] = \bigcup_{j=1}^m [t_{j-1}, t_j].
\end{equation*}
We then define the overlap parameters \( (\tau_j^0, \tau_j^1) \) for each subdomain $j$, where \( \tau_j^0 >0 \) denotes the backward~overlap size and \( \tau_j^1 > 0 \) the forward overlap size. Using these parameters, we define the overlapping~subdomains $[t_j^0, t_j^1]$ as
\begin{equation*}
t_j^0 = \max\{t_{j-1} - \tau_j^0, 0\} \;\;\text{ and }\;\; t_j^1 = \min\{t_j +  \tau_j^1, T\}\quad\text{ for }\; j = 1,\ldots,m.
\end{equation*}

Each subproblem will be formulated and solved over its corresponding overlapping subdomain, allowing the scheme to iteratively propagate information across the entire time domain while exploiting parallelism at each~\mbox{iteration}.~To~\mbox{formalize}~the~\mbox{proposed}~scheme,~we~define~the~\mbox{following}~\mbox{parameterized}~\mbox{subproblems}.~From this point onward, we denote the full OCP \eqref{eqn:linear-quadratic-ocp} as $\mathcal{P}([0,T]; x_0)$,~with~its~\mbox{optimal}~\mbox{solution}~denoted~by~$(x^*, u^*, \lambda^*)$.

\begin{definition}\label{definition:define-full-and-subproblems} 

For each $1 \le j \le m$, we define the $j$-th subproblem on the interval $[t_j^0, t_j^1]$ with boundary parameters $(p_j, q_j)\in \mathbb{R}^{n_x}\times \mathbb{R}^{n_x}$, denoted by $\mathcal{P}_j([t_j^0, t_j^1]; p_j, q_j)$, as
\begin{subequations}\label{eqn:defineparameterizedsubproblem}
\begin{align}
\min_{u_j(\cdot), x_j(\cdot)} \quad & \mathcal{J}_j[u_j, x_j; q_j] \coloneqq \frac12\int_{t_j^0}^{t_j^1} 
\begin{bmatrix}
x_j(t) \\ u_j(t)
\end{bmatrix}^\top\begin{bmatrix}
Q(t) & H^\top(t) \\
H(t) & R(t)
\end{bmatrix}\begin{bmatrix}
x_j(t) \\ u_j(t)
\end{bmatrix} dt + L_j(x_j(t_j^1); q_j), \label{subeqn:subproblem_cost} \\
\text{s.t. } \quad\;\;\;\; 
& \dot{x}_j(t) = A(t)x_j(t) + B(t)u_j(t), \quad t \in (t_j^0, t_j^1], \label{subeqn:subproblem_dynamics} \\
& x_j(t_j^0) = p_j, \label{subeqn:subproblem_initial}
\end{align}
\end{subequations}
where the terminal cost $L_j(x_j(t_j^1); q_j)$ is given by
\begin{equation}  \label{eqn:terminalcostparameterized}
L_j(x_j(t_j^1); q_j) \coloneqq \begin{cases}
\frac12x_j^\top(t_j^1)Q(t_j^1)x_j(t_j^1) - x_j^{\top}(t_j^1)Q(t_j^1)q_j, & \text{if } 1 \le j < m, \\
\frac12 x_j(T)^\top Q_T x_j(T), & \text{if } j = m.
\end{cases}
\end{equation}

\end{definition}

In \Cref{definition:define-full-and-subproblems}, the parameter $p_j\in \mathbb{R}^{n_x}$ defines the initial state, and $q_j\in \mathbb{R}^{n_x}$ defines the target state,~which is also used to define the terminal cost. In \eqref{eqn:terminalcostparameterized}, the first case promotes continuity across overlapping~intervals through a quadratic penalty, and the second case recovers the original terminal cost from the full problem~\eqref{eqn:linear-quadratic-ocp}.
The inclusion of an appropriate penalty on the boundary centered at a reference point is essential to ensure boundary consistency. The following proposition establishes that, with properly chosen boundary parameters, the subproblems defined in~\eqref{eqn:defineparameterizedsubproblem} recover the truncated optimal solution of the full problem.

We note that it is beneficial to formulate the subproblems so that their optimality conditions take the same form as the PMP equations~\eqref{equ:PMP} for the full problem. This structural alignment ensures that a single numerical solver can be reused across parallel subproblems with minimal modification.

\begin{proposition} \label{proposition:fullproblemsolvedimpliessubproblemsolved}

Suppose Assumptions \ref{assumption:matrices-are-bounded} and \ref{assumption:matrices-are-bounded-below} hold for $\mathcal{P}([0,T]; x_0)$. For each $1\leq j\leq m$, we consider the subproblem $\mathcal{P}_j([t_j^0, t_j^1]; p_j^*,q_j^*)$ with the boundary parameters $(p_j^*,q_j^*)$ defined as
\begin{equation}\label{eqn:optimalparameterchoices}
p_j^* =  \begin{cases}
x_0, & \text{if } j = 1, \\
x^*(t_j^0), & \text{if } j > 1,
\end{cases} \quad \quad\quad
q_j^* = 
\begin{cases}
x^*(t_j^1) - Q^{-1}(t_j^1) \lambda^*(t_j^1), & \text{if } j < m, \\
0, & \text{if } j = m.
\end{cases}
\end{equation}
Then, $\mathcal{P}_j([t_j^0, t_j^1]; p_j^*,q_j^*)$ admits a unique solution $(x_j^*, u_j^*, \lambda_j^*): [t_j^0, t_j^1]\rightarrow \mR^{n_x}\times \mR^{n_u}\times \mR^{n_x}$, given by
\begin{equation*}
x_j^*(t) = x^*(t), \quad u_j^*(t) = u^*(t), \quad \lambda_j^*(t) = \lambda^*(t), \quad\quad t \in [t_j^0, t_j^1].
\end{equation*}
That is, the restriction of the full problem's solution to the interval \( [t_j^0, t_j^1] \) solves the subproblem exactly.

\end{proposition}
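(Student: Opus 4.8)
The plan is to reduce everything to the sufficiency (converse) direction of \Cref{theorem:pontryagin}, applied separately to each subproblem $\mathcal{P}_j([t_j^0,t_j^1];p_j^*,q_j^*)$. First I would observe that $\mathcal{P}_j$ is itself a linear-quadratic OCP of the form \eqref{eqn:linear-quadratic-ocp}, posed on the shorter horizon $[t_j^0,t_j^1]\subseteq[0,T]$, with the \emph{same} running-cost matrices $Q,H,R$ and dynamics matrices $A,B$ (restricted to that interval), initial state $p_j^*$, and terminal-cost matrix $Q(t_j^1)$ when $j<m$ (resp.\ $Q_T$ when $j=m$). Consequently \Cref{assumption:matrices-are-bounded} and \Cref{assumption:matrices-are-bounded-below} are inherited: continuity and the bounds restrict trivially, and $R(t)\succeq\gamma_R I$, $Q(t)-H^\top(t)R^{-1}(t)H(t)\succeq\gamma_Q I$ hold on the subinterval. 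For $j<m$ the terminal matrix obeys the required two-sided bound because $Q(t_j^1)\succeq Q(t_j^1)-H^\top(t_j^1)R^{-1}(t_j^1)H(t_j^1)\succeq\gamma_Q I$ (as $H^\top R^{-1}H\succeq 0$) and $Q(t_j^1)\preceq\lambda_Q I$ by \Cref{assumption:matrices-are-bounded}(b); for $j=m$ it holds by hypothesis on $Q_T$. In particular $Q(t_j^1)\succ 0$ is invertible, so $q_j^*$ in \eqref{eqn:optimalparameterchoices} is well-defined, and \Cref{theorem:pontryagin} applies to $\mathcal{P}_j$: it has a unique optimal solution, and any triple satisfying the subproblem's PMP system \emph{is} that solution.

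Next I would write out the subproblem's PMP. The subproblem Hamiltonian coincides with $\mathcal{H}$ in \eqref{eqn:problem-hamiltonian} (the integrand and the term $\lambda^\top(Ax+Bu)$ are unchanged), so the conditions are $\dot x_j^*=Ax_j^*+Bu_j^*$ with $x_j^*(t_j^0)=p_j^*$; $\dot\lambda_j^*=-(Qx_j^*+H^\top u_j^*+A^\top\lambda_j^*)$ with $\lambda_j^*(t_j^1)=\nabla_x L_j(x_j^*(t_j^1);q_j^*)$; and the pointwise minimization \eqref{eqn:optimalcontrolminimizes}. Differentiating \eqref{eqn:terminalcostparameterized} gives $\nabla_x L_j(x;q_j^*)=Q(t_j^1)(x-q_j^*)$ for $j<m$ and $\nabla_x L_m(x)=Q_Tx$. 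It then remains to check that the restriction $(x^*,u^*,\lambda^*)|_{[t_j^0,t_j^1]}$ meets all of these. The state and adjoint ODEs and the minimization inequality hold on $[t_j^0,t_j^1]$ because they hold on all of $[0,T]$ by the full PMP \eqref{equ:PMP}, and $x^*,\lambda^*$ are $C^1$ and $u^*$ continuous globally, hence on the subinterval. Only the two boundary conditions need attention. For the initial condition: if $j=1$ then $t_1^0=\max\{-\tau_1^0,0\}=0$ and $p_1^*=x_0=x^*(0)$; if $j>1$ then $p_j^*=x^*(t_j^0)$ by definition (this remains $x^*(t_j^0)$ even when the backward overlap clips $t_j^0$ to $0$). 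For the terminal adjoint condition with $j<m$: substituting $q_j^*=x^*(t_j^1)-Q^{-1}(t_j^1)\lambda^*(t_j^1)$ yields $Q(t_j^1)(x^*(t_j^1)-q_j^*)=\lambda^*(t_j^1)$, matching $\lambda_j^*(t_j^1)$; for $j=m$, $t_m^1=\min\{T+\tau_m^1,T\}=T$ and $\lambda^*(T)=Q_Tx^*(T)$ is exactly \eqref{subeqn:pmp-adjoint}.

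Having verified all three PMP conditions of $\mathcal{P}_j$ for the restricted triple, the converse part of \Cref{theorem:pontryagin} applied to $\mathcal{P}_j$ gives that $(x^*,u^*,\lambda^*)|_{[t_j^0,t_j^1]}$ is precisely the unique optimal solution of $\mathcal{P}_j([t_j^0,t_j^1];p_j^*,q_j^*)$, which is the claim. I do not expect a genuine obstacle: the argument is essentially bookkeeping confirming that the parameter choices \eqref{eqn:optimalparameterchoices} are exactly those that make the truncated adjoint terminal condition agree with the global adjoint, and that the boundary-interval edge cases ($j=1$, $j=m$, and a clipped backward overlap) are consistent. The step requiring the most care is the preliminary one — checking that each $\mathcal{P}_j$ satisfies \Cref{assumption:matrices-are-bounded-below} (in particular that the terminal matrix $Q(t_j^1)$ lies between $\gamma_Q I$ and $\lambda_Q I$ and is invertible) so that \Cref{theorem:pontryagin}, and hence its converse, is legitimately available on every subdomain.
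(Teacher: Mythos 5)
Your proposal is correct and follows essentially the same route as the paper's proof: invoke the sufficiency direction of \Cref{theorem:pontryagin} for each subproblem, observe that the state/adjoint dynamics and the Hamiltonian minimization restrict trivially from the full PMP system \eqref{equ:PMP}, and then check only the two boundary conditions $x^*(t_j^0)=p_j^*$ and $\lambda^*(t_j^1)=\nabla_{x_j}L_j(x^*(t_j^1);q_j^*)$ via the parameter choices \eqref{eqn:optimalparameterchoices}. Your preliminary verification that \Cref{assumption:matrices-are-bounded,assumption:matrices-are-bounded-below} are inherited on each subinterval (in particular that $\gamma_Q I\preceq Q(t_j^1)\preceq\lambda_Q I$, so $Q(t_j^1)$ is invertible and $q_j^*$ is well defined) is a detail the paper leaves implicit, and is a welcome addition.
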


\begin{proof}
By the necessary and sufficient conditions stated in Theorem \ref{theorem:pontryagin}, it suffices to verify that $\{x^*(t), u^*(t), \\ \lambda^*(t)\}_{t\in[t_j^0, t_j^1]}$ satisfies the PMP system for the subproblem $\mathcal{P}_j([t_j^0, t_j^1]; p_j^*,q_j^*)$ under the parameter choices given in~\eqref{eqn:optimalparameterchoices}. For any $1\leq j \leq m$, let us define the Hamiltonian of the $j$-th subproblem as
\begin{equation*}
\mathcal{H}_j(t, x_j(t), u_j(t), \lambda_j(t)) \coloneqq \frac{1}{2}\begin{bmatrix}
x_j(t)\\ u_j(t)
\end{bmatrix}^\top\begin{bmatrix}
Q(t) & H^\top(t) \\
H(t) & R(t)
\end{bmatrix}\begin{bmatrix}
x_j(t)\\u_j(t)
\end{bmatrix}+
\lambda_j(t)^{\top}(A(t)x_j(t) + B(t)u_j(t)).
\end{equation*}
Then, the PMP system for the subproblem $\mathcal{P}_j([t_j^0, t_j^1]; p_j,q_j)$ is
\begin{subequations}
\begin{align*}
\dot{x}_j^*(t) & = \nabla_{\lambda_j} \mathcal{H}_j(t, x_j^*(t), u_j^*(t), \lambda_j^*(t)) = A(t) x_j^*(t) + B(t) u_j^*(t),  & x_j^*(t_j^0) &= p_j,  \\
\dot{\lambda}_j^*(t) &= -\nabla_{x_j} \mathcal{H}(t, x_j^*(t), u_j^*(t), \lambda_j^*(t)) = - \begin{bmatrix}
Q(t) & H^\top(t)
\end{bmatrix}\begin{bmatrix}
x_j^*(t)\\
u_j^*(t)
\end{bmatrix} - A^\top(t)\lambda_j^*(t),  & \lambda_j^*(t_j^1) &= \nabla_{x_j} L_j(x_j^*(t_j^1); q_j), \\
&\hskip-0.5cm  \mathcal{H}_j(t, x_j^*(t), u_j^*(t), \lambda_j^*(t)) \le \mathcal{H}_j(t, x_j^*(t), u, \lambda_j^*(t)), & & \hskip-1cm  \forall u \in \mathbb{R}^{n_u}, \; \forall t\in[t_j^0, t_j^1]. 
\end{align*}
\end{subequations}
Comparing the above system with the full PMP system in \eqref{equ:PMP}, we see that it suffices to verify $x^*(t_j^0) = p_j^*$~and $\lambda^*(t_j^1) = \nabla_{x_j} L_j(x^*(t_j^1); q_j^*)$, since all other conditions are subconditions and are directly implied by \eqref{equ:PMP}. Note that $x^*(t_j^0) = p_j^*$ is implied by \eqref{eqn:optimalparameterchoices} and \eqref{eqn:linear-quadratic-ocp:c}. When $j = m$, 
\begin{equation*}
\lambda^*(t_m^1) = \lambda^*(T) \stackrel{\eqref{subeqn:pmp-adjoint}}{=} Q_Tx^*(T) = \nabla_{x_m} L_m(x^*(t_m^1); q_m^*).
\end{equation*}
When $1\leq j<m$, we have
\begin{equation*}
\nabla_{x_j} L_j(x^*(t_j^1); q_j^*) \stackrel{\eqref{eqn:terminalcostparameterized}}{=} Q(t_j^1) \left(x^*(t_j^1) - q_j^*\right) \stackrel{\eqref{eqn:optimalparameterchoices}}{=}\lambda^*(t_j^1).
\end{equation*}
This completes the proof.
\end{proof}

\begin{algorithm}[!htb]
\caption{Continuous-Time Overlapping Schwarz Decomposition}
\label{alg:mainprocedure}
\begin{algorithmic}[1]
\State \textbf{Input:} Initial state, control, and adjoint trajectory $(x^{(0)}, u^{(0)}, \lambda^{(0)}):[0,T]\rightarrow \rr^{n_x}\times \rr^{n_u}\times \rr^{n_x}$ with $x^{(0)}(0) = x_0$; domain decomposition and its overlapping subdomains $[t_{j-1},t_j]\subseteq [t_j^0, t_j^1]$, $1\leq j\leq m$.
\For{$k = 0, 1, 2, \ldots$}
\For{$j = 1$ \textbf{to} $m$ \textbf{in parallel}}
\State Specify boundary parameters: $p_j^{(k+1)} = x^{(k)}(t_j^0)$ and $q_j^{(k+1)} = x^{(k)}(t_j^1) - Q^{-1}(t_j^1) \lambda^{(k)}(t_j^1)$.
\State Solve the subproblem $\mathcal{P}_j([t_j^0,t_j^1]; p_j^{(k+1)}, q_j^{(k+1)})$ to obtain~$(x_j^{(k+1)}, u_j^{(k+1)}, \lambda_j^{(k+1)})$~(e.g.,~with~warm initialization $(x_j^{(k)}, u_j^{(k)}, \lambda_j^{(k)})$ and Algorithm \ref{alg:gradient-descent-subinterval}).
\EndFor
\State Aggregate $(x^{(k+1)}, u^{(k+1)}, \lambda^{(k+1)})(t) \coloneqq (x_j^{(k+1)}, u_j^{(k+1)}, \lambda_j^{(k+1)})(t)$ for $t\in[t_{j-1}, t_j)$ when $1\leq j\leq m-1$ and for $t\in[t_{m-1}, t_m]$ when $j=m$.
\EndFor
\end{algorithmic}
\end{algorithm}

Since each subproblem admits a unique optimal solution,~\mbox{Proposition}~\ref{proposition:fullproblemsolvedimpliessubproblemsolved}~\mbox{guarantees}~that,~with~\mbox{correctly}~specified boundary parameters, solving $\mathcal{P}_j([t_j^0, t_j^1]; p_j^*,q_j^*)$ yields the restriction of the global solution~$(x^*, u^*, \lambda^*)$ to the sub-interval $[t_j^0, t_j^1]$. 
This observation motivates our decomposition scheme. Since in practice the optimal boundary parameters are unknown without solving full problem~$\mathcal{P}([0,T];x_0)$,~we~present~the~continuous-time overlapping Schwarz algorithm in \Cref{alg:mainprocedure} to address this, which iteratively refines the parameters $(p_j, q_j)$. Starting with an imperfect set of parameters, the algorithm improves them over iterations,~and~ultimately drives them to converge to the full problem solution.  
We also present a gradient descent procedure for solving the subproblems Appendix~\ref{appendix:deriveadjointgradient}, where the required gradient information is derived via functional calculus.

In the following sections, we turn to the analysis of the properties of the linear-quadratic OCPs to~investigate the theoretical foundations of the convergence of the Schwarz method.

\section{Exponential Decay of Sensitivity in Linear-Quadratic Control} \label{sec:parameterized-ocp} 

While~\Cref{proposition:fullproblemsolvedimpliessubproblemsolved} provides a~theoretical foundation for recovering the full problem solution using the overlapping Schwarz scheme under~ideal boundary conditions, it relies on knowledge of the exact optimal trajectory that is not available in practice. To rigorously justify the convergence behavior of~\Cref{alg:mainprocedure}, we conduct a stability analysis of the OCP~\eqref{eqn:linear-quadratic-ocp}. 

\noindent In particular, we examine how errors in the boundary data influence the resulting solution, and how such~perturbations propagate through the system dynamics over time.

Our sensitivity study can be made on a more general problem. We begin by introducing a parameterized linear-quadratic OCP that generalizes~\eqref{eqn:linear-quadratic-ocp} by including additional dependence on an auxiliary input mapping \( d: [0,T] \rightarrow \mathbb{R}^{n_d} \) and two boundary vectors $d_0\in\mR^{n_x}, d_T\in\mR^{n_d}$:
\begin{subequations} \label{eqn:parameterized-linear-quadratic-program}
\begin{align}
\min_{u(\cdot),\,x(\cdot)}\quad
& \frac12\int_0^T
\begin{bmatrix}
x(t)\\ u(t)\\ d(t)
\end{bmatrix}^\top
\begin{bmatrix}
Q(t) & H^\top(t) & G^\top(t) \\
H(t) & R(t) & W^\top(t) \\
G(t) & W(t) & 0
\end{bmatrix}
\begin{bmatrix}
x(t)\\ u(t)\\ d(t)
\end{bmatrix}
dt + \frac12 \begin{bmatrix}
x(T)\\ d_T
\end{bmatrix}^\top\begin{bmatrix}
Q_T & G_T^\top \\
G_T & 0
\end{bmatrix}\begin{bmatrix}
x(T)\\ d_T
\end{bmatrix}, \\
\text{s.t.} \quad\;\;\; & \dot{x}(t) = A(t)x(t) + B(t)u(t) + C(t)d(t), \quad t \in (0,T], \\
& x(0) = d_0.
\end{align}
\end{subequations}
Here, $G: [0,T] \rightarrow \mathbb{R}^{n_d \times n_x}$, $W: [0,T] \rightarrow \mathbb{R}^{n_d \times n_u}$, and $G_T \in \mathbb{R}^{n_x \times n_d}$ represent additional linear contributions of the parameter trajectory to the cost; and $C: [0,T] \rightarrow \mathbb{R}^{n_x \times n_d}$ introduces a linear dependence of~the~dynamics on $d$. To ensure well-posedness, we extend Assumption \ref{assumption:matrices-are-bounded} to hold for parameter-dependent matrices.

\begin{assumption} \label{assumption:additional-boundedness-for-parameterized}
We assume the mappings $G, W, C$ are continuous and uniformly bounded: $\|G\|_\infty\leq \lambda_G$, $\|W\|_\infty\leq \lambda_W$, $\|C\|_\infty\leq \lambda_C$ for some constants $\lambda_G, \lambda_W, \lambda_C>0$ independent of $T$. In addition, $\|G_T\|\leq \lambda_G$.

\end{assumption}

In essence, Problem~\eqref{eqn:parameterized-linear-quadratic-program} allows bounded linear dependence on the mapping $d$ and vectors $d_0, d_T$, which include the boundary parameterization as given by~\Cref{proposition:fullproblemsolvedimpliessubproblemsolved}, where only $d_0, d_T$ are possibly nonzero at domain boundaries~of~subproblems. We now state the equations that are central to our analysis.

\begin{theorem}  \label{theorem:state-closed-loop-equation}

Under Assumptions~\ref{assumption:matrices-are-bounded}--\ref{assumption:additional-boundedness-for-parameterized}, the optimal state trajectory $x^*(t)$ of the parameterized OCP~\eqref{eqn:parameterized-linear-quadratic-program} satisfies the closed-loop system:
\begin{subequations}
\begin{align} \label{eqn:state-closed-loop-system}
\dot{x}^*(t) & = \cbr{A(t) - B(t)R^{-1}(t)H(t) - B(t)R^{-1}(t)B^\top(t)S(t)}x^*(t) - B(t)R^{-1}(t)B^\top(t)v(t)  \nonumber \\
&\quad  + \cbr{C(t) - B(t)R^{-1}(t)W^\top(t)}d(t), \quad t\in(0, T],\\
x^*(0) &= d_0,
\end{align}
\end{subequations}
where the mapping $S: [0,T] \rightarrow \mathbb{R}^{n_x \times n_x}$ solves the matrix Riccati equation:
\begin{subequations} \label{eqn:riccati-equation}
\begin{align}
\dot{S}(t) &= S(t)B(t)R^{-1}(t)B^\top(t)S(t) - S(t)\cbr{A(t) - B(t)R^{-1}(t)H(t)} \nonumber\\
&\quad - \cbr{A(t) - B(t)R^{-1}(t)H(t)}^\top S(t) - \cbr{Q(t) - H^\top(t)R^{-1}(t)H(t)}, \quad t\in [0,T),\\
S(T) &= Q_T,
\end{align}
\end{subequations}
and the mapping $v: [0,T] \rightarrow \mathbb{R}^{n_x}$ solves the following backward linear dynamics
\begin{subequations}\label{eqn:vector-equation}
\begin{align} 
\dot{v}(t) &= -\cbr{A(t) - B(t)R^{-1}(t)H(t) - B(t)R^{-1}(t)B^\top(t)S(t)}^\top v(t) \nonumber\\
& \quad + \cbr{W(t)R^{-1}(t)\sbr{B^\top(t)S(t) + H(t)} - (G(t) + C^\top(t)S(t))}^\top d(t), \quad t\in[0, T),\\
v(T) & = G_T^\top d_T.
\end{align}
\end{subequations}
Furthermore, the optimal control trajectory is given by the expression:
\begin{equation} \label{eqn:sensitivity-control-expression}
u^*(t) = -R^{-1}(t)\cbr{H(t)x^*(t) + B^\top(t)\lambda^*(t) + W^\top(t)d(t)},
\end{equation}
where the adjoint is given by
\begin{equation*}
\lambda^*(t) = S(t)x^*(t) + v(t).
\end{equation*}
\end{theorem}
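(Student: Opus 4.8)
The plan is to derive the stated closed-loop system by writing out the PMP conditions for the parameterized OCP~\eqref{eqn:parameterized-linear-quadratic-program} and then decoupling the resulting Hamiltonian two-point boundary-value problem via a Riccati-type ansatz. First I would form the Hamiltonian $\mathcal{H}(t,x,u,\lambda) = \tfrac12 [x;u;d]^\top M(t) [x;u;d] + \lambda^\top(Ax + Bu + Cd)$, where $M(t)$ is the block cost matrix; since $d(\cdot), d_0, d_T$ are fixed data (not optimization variables), the stationarity condition $\nabla_u \mathcal{H} = 0$ reads $H(t)x^*(t) + R(t)u^*(t) + W^\top(t)d(t) + B^\top(t)\lambda^*(t) = 0$. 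Under Assumption~\ref{assumption:matrices-are-bounded-below}, $R(t)$ is invertible, so this gives the control expression~\eqref{eqn:sensitivity-control-expression}. Substituting $u^*$ back into $\dot{x}^* = Ax^* + Bu^* + Cd$ and into the adjoint equation $\dot\lambda^* = -\nabla_x \mathcal{H} = -Q x^* - H^\top u^* - G^\top d - A^\top\lambda^*$, with boundary conditions $x^*(0) = d_0$ and $\lambda^*(T) = Q_T x^*(T) + G_T^\top d_T$ (the latter coming from the terminal cost in~\eqref{eqn:parameterized-linear-quadratic-program}), yields a coupled linear forward-backward system in $(x^*,\lambda^*)$.

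Next I would posit the affine ansatz $\lambda^*(t) = S(t)x^*(t) + v(t)$ with $S(T) = Q_T$ and $v(T) = G_T^\top d_T$ to match the terminal condition, and differentiate: $\dot\lambda^* = \dot{S}x^* + S\dot{x}^* + \dot{v}$. On one side, $\dot{x}^*$ is replaced using the closed-loop drift obtained after eliminating $u^*$ and substituting $\lambda^* = Sx^* + v$; on the other side, $\dot\lambda^*$ is replaced using the adjoint equation with the same substitutions. Equating the two expressions and collecting the terms multiplying $x^*(t)$, the terms multiplying $d(t)$, and the constant-in-$x^*$ terms separately — this decomposition is legitimate because it must hold for the particular trajectory, but the standard move is to demand it hold identically, which forces the $x^*$-coefficient to satisfy the Riccati equation~\eqref{eqn:riccati-equation} and the remaining part to satisfy the linear dynamics~\eqref{eqn:vector-equation} for $v$. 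Plugging $\lambda^* = Sx^* + v$ into~\eqref{eqn:sensitivity-control-expression} and then into $\dot{x}^* = (A - BR^{-1}H)x^* - BR^{-1}B^\top\lambda^* + (C - BR^{-1}W^\top)d$ produces the stated closed-loop system~\eqref{eqn:state-closed-loop-system}.

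The main obstacle is not conceptual but organizational: one must verify that the matrix Riccati equation~\eqref{eqn:riccati-equation} has a solution $S(\cdot)$ that exists on the entire interval $[0,T]$ (no finite-escape blow-up) and is symmetric, so that the ansatz is well-defined; this is where Assumption~\ref{assumption:matrices-are-bounded-below} (coercivity, giving $Q - H^\top R^{-1}H \succeq \gamma_Q I$ and $R \succeq \gamma_R I$) together with Assumption~\ref{assumption:matrices-are-bounded} is essential, as it identifies the Riccati equation with the value-function Hessian of a strongly convex LQ problem on $[t,T]$ whose cost-to-go $J^*(t,z) = \tfrac12 z^\top S(t) z + \cdots$ is finite by Theorem~\ref{theorem:hjbequation}. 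Given global existence of $S$, the equation for $v$ is a linear ODE with bounded coefficients and hence has a unique solution on $[0,T]$. The remaining work — carrying out the term-by-term matching and checking each matrix product lands exactly as written in~\eqref{eqn:riccati-equation}, \eqref{eqn:vector-equation}, and~\eqref{eqn:state-closed-loop-system} — is routine but must be done carefully to track the $W, C, G, G_T$ contributions that distinguish the parameterized problem from the classical LQ regulator. Finally, since Theorem~\ref{theorem:pontryagin}-type sufficiency applies (the problem is strongly convex), the triple constructed this way is indeed the unique optimal solution, closing the argument.
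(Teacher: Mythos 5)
Your proposal is correct, but it takes a genuinely different route from the paper's. You derive the closed-loop system by the classical \emph{costate sweep}: write the PMP forward--backward system for the parameterized problem, eliminate $u^*$ via stationarity of the Hamiltonian, posit the affine ansatz $\lambda^*(t)=S(t)x^*(t)+v(t)$ with $S(T)=Q_T$, $v(T)=G_T^\top d_T$, differentiate, and match the $x^*$-coefficient (giving the Riccati equation) against the remaining affine part (giving the $v$-equation); I checked the sign bookkeeping and your matching does land exactly on \eqref{eqn:riccati-equation}, \eqref{eqn:vector-equation}, and \eqref{eqn:state-closed-loop-system}, including the $(B^\top S+H)^\top R^{-1}W^\top - G^\top - SC$ coefficient of $d$. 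The paper instead works through the HJB equation (Appendix~\ref{appendix:closed-loop-sensitivity-ocp}): it posits the quadratic value-function ansatz $J^*(t,x)=\tfrac12 x^\top S x + x^\top v + \tfrac12 z$, uses $\lambda^*=\nabla_x J^*$, minimizes the Hamiltonian in $u$, and then matches quadratic, linear, and constant terms in $x^*$ inside the scalar HJB PDE. The two derivations are dual to each other ($\lambda^*$ being the state-gradient of the value function); yours trades the scalar-PDE coefficient matching for vector-ODE coefficient matching and closes via PMP sufficiency under strong convexity, whereas the paper closes via the verification/HJB machinery of \Cref{theorem:hjbequation}. Your approach is arguably a bit more self-contained on one point the paper glosses over in the appendix: you explicitly flag that global existence (no finite escape) and symmetry of $S$ on $[0,T]$ must be verified for the ansatz to be legitimate, and you sketch the right argument (identify $S$ with the Hessian of the finite cost-to-go of a strongly convex LQ problem); the paper defers this to \Cref{lemma:riccati-positive-definite} with a citation. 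The only shared informality is that both routes invoke optimality conditions (\Cref{theorem:pontryagin} or \Cref{theorem:hjbequation}) stated for the unparameterized problem \eqref{eqn:linear-quadratic-ocp} and apply them to the parameterized problem \eqref{eqn:parameterized-linear-quadratic-program}; the extension is immediate since the added terms are affine in $(x,u)$ for fixed $d$, but it would be worth one sentence to say so.
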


\begin{proof}
The proof is provided in Appendix~\ref{appendix:closed-loop-sensitivity-ocp}.
\end{proof}

The stability of the closed-loop system \eqref{eqn:state-closed-loop-system} is governed by the matrix-valued function $S$, which evolves according to \eqref{eqn:riccati-equation} and characterizes the sensitivity of the cost-to-go function \eqref{eqn:optimal-cost-to-go}. We will demonstrate that if $S$ remains positive~definite and uniformly bounded (especially from below), the resulting states~\eqref{eqn:state-closed-loop-system}~exhibit exponential stability. Thus, quantifying the spectral properties of $S$ is essential for analyzing how perturbations in the parameters propagate through the system.

To facilitate the analysis of $S$, we introduce an auxiliary quadratic control problem that possesses the same Riccati equation~\eqref{eqn:state-closed-loop-system} but without the linear dependence on the parameter $d$, which simplifies the derivation of spectral bounds.

\begin{lemma}[Shifted OCP] \label{theorem:shifted-ocp}
Suppose $Q, R, H, A, B$ be time-varying matrices satisfying Assumptions~\ref{assumption:matrices-are-bounded} and \ref{assumption:matrices-are-bounded-below}. For any initial state $x_0 \in \mathbb{R}^{n_x}$, consider the following OCP on the interval $[0, T]$:
\begin{subequations} \label{eqn:riccati-equivalent-problem}
\begin{align}
\min_{u(\cdot), \, x(\cdot)}\quad & \frac12 \int_{0}^T
\begin{bmatrix}
x(t)\\ u(t)
\end{bmatrix}^\top\begin{bmatrix}
Q(t) - H^\top(t) R^{-1}(t) H(t) & 0\\
0 & R(t)
\end{bmatrix}\begin{bmatrix}
x(t)\\ u(t)
\end{bmatrix} dt + \frac12x^\top(T) Q_T x(T), \\
\text{s.t.} \quad & \dot{x}(t) = \big(A(t) - B(t) R^{-1}(t) H(t)\big) x(t) + B(t) u(t), \label{subeqn:shfitedpair}\\
& x(0) = x_0. \nonumber
\end{align}
\end{subequations}
Then, the associated Riccati matrix $S$ of \eqref{eqn:riccati-equivalent-problem} is given by the same Riccati equation~\eqref{eqn:riccati-equation} as in~\Cref{theorem:state-closed-loop-equation}.
\end{lemma}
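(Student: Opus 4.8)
The plan is to recognize the shifted OCP~\eqref{eqn:riccati-equivalent-problem} as a standard linear-quadratic problem with no state--control cross term, invoke the Riccati characterization already available for such problems, and then match coefficients. Concretely, \eqref{eqn:riccati-equivalent-problem} is the instance of the parameterized problem~\eqref{eqn:parameterized-linear-quadratic-program} obtained by setting the auxiliary input $d\equiv 0$ (so that $G,W,C,G_T$ play no role) and making the substitutions $A\mapsto \widetilde A := A - BR^{-1}H$, $B\mapsto B$, $Q\mapsto \widetilde Q := Q - H^\top R^{-1}H$, $H\mapsto 0$, $R\mapsto R$, and $Q_T\mapsto Q_T$. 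Hence \Cref{theorem:state-closed-loop-equation} applies to \eqref{eqn:riccati-equivalent-problem} once we check that the substituted data obey Assumptions~\ref{assumption:matrices-are-bounded}--\ref{assumption:additional-boundedness-for-parameterized}, and the Riccati matrix associated with \eqref{eqn:riccati-equivalent-problem} is then precisely the function $S$ produced by that theorem.

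The first step is to verify these hypotheses for the substituted data. The only point that is not immediate is the uniform boundedness of $\widetilde A$: by \Cref{assumption:matrices-are-bounded-below} we have $R(t)\succeq \gamma_R I$ for all $t$, hence $\|R^{-1}\|_\infty \le 1/\gamma_R$, so $\widetilde A$ is continuous with $\|\widetilde A\|_\infty \le \lambda_A + \lambda_B\lambda_H/\gamma_R$, a bound independent of $T$. The coercivity conditions transfer directly, since for the shifted data the control weight is $R\succeq \gamma_R I$ and the ``$Q$ minus $H^\top R^{-1}H$'' quantity equals $\widetilde Q - 0 = Q - H^\top R^{-1}H \succeq \gamma_Q I$; the terminal cost $Q_T$ is unchanged and already satisfies $\gamma_Q I\preceq Q_T\preceq \lambda_Q I$. \Cref{assumption:additional-boundedness-for-parameterized} holds trivially with all parameter matrices set to zero.

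The second step is to instantiate the Riccati equation~\eqref{eqn:riccati-equation} with $A\mapsto \widetilde A$, $H\mapsto 0$, $Q\mapsto \widetilde Q$. Since the new cross term is zero, $\widetilde A - BR^{-1}\cdot 0 = \widetilde A$ and $\widetilde Q - 0^\top R^{-1} 0 = \widetilde Q$, so~\eqref{eqn:riccati-equation} evaluated at this data reads
\begin{multline*}
\dot S = S B R^{-1} B^\top S - S\widetilde A - \widetilde A^\top S - \widetilde Q \\
= S B R^{-1} B^\top S - S\big(A - B R^{-1} H\big) - \big(A - B R^{-1} H\big)^\top S - \big(Q - H^\top R^{-1} H\big),
\end{multline*}
with $S(T) = Q_T$, which is verbatim~\eqref{eqn:riccati-equation} written with the original matrices. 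This proves the lemma.

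If one prefers an argument that does not route through \Cref{theorem:state-closed-loop-equation}, the same conclusion follows from the classical ``sweep'' method: apply the PMP (\Cref{theorem:pontryagin}) to \eqref{eqn:riccati-equivalent-problem}; its stationarity condition gives $u^*(t) = -R^{-1}(t)B^\top(t)\lambda^*(t)$ and its adjoint equation is $\dot\lambda^* = -\widetilde Q x^* - \widetilde A^\top \lambda^*$ with $\lambda^*(T) = Q_T x^*(T)$; substitute $\lambda^*(t) = S(t)x^*(t)$ (legitimate because an LQ cost-to-go is quadratic, so $\lambda^*(t) = \nabla_z J^*(t,x^*(t)) = S(t)x^*(t)$), differentiate, eliminate $\dot x^*$ via the closed-loop dynamics, and identify coefficients using that $x^*(t)$ ranges over all of $\rr^{n_x}$ as $x_0$ does. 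I do not expect a genuine obstacle here: the content is coefficient bookkeeping, and the only point that deserves care is the well-posedness check in the second paragraph, which is exactly what makes ``the Riccati matrix of~\eqref{eqn:riccati-equivalent-problem}'' a well-defined object in the first place.
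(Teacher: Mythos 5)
Your proposal is correct and is in substance the same argument as the paper's: the paper simply cites the classical Riccati equation for a cross-term-free linear-quadratic problem (Kirk, Section 3.12, Eq.~(3.12-14)) and applies it to the shifted data $\widetilde A = A - BR^{-1}H$, $\widetilde Q = Q - H^\top R^{-1}H$, which is exactly your coefficient-matching step; your second, PMP/sweep route is essentially what that citation unpacks to. The one wrinkle in your primary route is that \Cref{theorem:state-closed-loop-equation} formally lists Assumption~\ref{assumption:uniform-controllability} (UCC) among its hypotheses, which the present lemma does not assume and which you do not verify for the shifted pair $(A-BR^{-1}H,B)$; this is harmless here, since the derivation of the Riccati equation in Appendix~\ref{appendix:closed-loop-sensitivity-ocp} never uses controllability (and your sweep argument avoids the issue entirely), but it is worth noting that you are invoking a theorem under nominally stronger hypotheses than the lemma grants, and your hypothesis-verification paragraph silently skips that item.
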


\begin{proof}
See \cite[Section 3.12]{Kirk2004Optimal} for the matrix Riccati equation corresponding to a linear-quadratic OCP~without cross-product terms in the cost functional. By directly applying \cite[(3.12-14)]{Kirk2004Optimal} to \eqref{eqn:riccati-equivalent-problem}, we obtain the desired result.
\end{proof}

We now study the properties of the Riccati matrix through the lens of the shifted problem \eqref{eqn:riccati-equivalent-problem}. Note that the auxiliary problem shares the same Riccati equation as the original parameterized control problem~\eqref{eqn:parameterized-linear-quadratic-program}; thus, any boundedness or decay properties of $S$ can be analyzed through problem \eqref{eqn:riccati-equivalent-problem}. We begin the~discussion~with an intermediate result.

\begin{lemma} \label{lemma:riccati-positive-definite}
Under~Assumptions \ref{assumption:matrices-are-bounded} and~\ref{assumption:matrices-are-bounded-below}, a unique solution $S: [0,T]\rightarrow \rr^{n_x\times n_x}$ of the matrix Riccati~equation \eqref{eqn:riccati-equation} exists and satisfies $S(t)\succ 0$ for any $t\in[0, T]$.
\end{lemma}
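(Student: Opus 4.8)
The plan is to analyze the Riccati matrix $S$ through the shifted OCP \eqref{eqn:riccati-equivalent-problem} provided by \Cref{theorem:shifted-ocp}, since that problem carries the same Riccati equation but has a decoupled, manifestly convex cost. First I would argue existence and uniqueness of a solution $S$ on all of $[0,T]$. The Riccati equation \eqref{eqn:riccati-equation} is a matrix ODE with a quadratic (hence locally Lipschitz) right-hand side and the terminal condition $S(T) = Q_T$, so by the Picard–Lindel\"of theorem a unique solution exists on a maximal subinterval of $[0,T]$; the only way it could fail to extend to the whole interval is finite-time blow-up. To rule that out, I would identify $S(t)$ with the value-function Hessian of the shifted problem on the truncated horizon $[t,T]$: concretely, letting $J_t^*(z)$ denote the optimal cost of \eqref{eqn:riccati-equivalent-problem} started at $x(t)=z$, one has $J_t^*(z) = \tfrac12 z^\top S(t) z$ (this is the standard LQR value-function identity, and it is exactly what \Cref{theorem:hjbequation} specialized to the shifted data yields, with $S$ solving \eqref{eqn:riccati-equation}). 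Because the running cost weight $Q(t)-H^\top(t)R^{-1}(t)H(t) \succeq \gamma_Q I$ and $R(t)\succeq \gamma_R I$ by \Cref{assumption:matrices-are-bounded-below}, and $0 \preceq Q_T \preceq \lambda_Q I$, the optimal cost is finite and nonnegative for every $z$; feeding in the admissible control $u \equiv 0$ gives a finite upper bound (the state of \eqref{subeqn:shfitedpair} with $u\equiv 0$ stays bounded because $A - BR^{-1}H$ is uniformly bounded by Assumptions \ref{assumption:matrices-are-bounded} and \ref{assumption:matrices-are-bounded-below}). Hence $0 \le z^\top S(t) z < \infty$ uniformly, which both prevents blow-up — so $S$ is defined on all of $[0,T]$ — and shows $S(t) \succeq 0$.

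Next I would upgrade $S(t) \succeq 0$ to $S(t) \succ 0$. The cleanest route is again the value-function characterization: $z^\top S(t) z = 2 J_t^*(z)$, and I claim $J_t^*(z) > 0$ whenever $z \neq 0$. Indeed, the running cost $\tfrac12\int_t^T \big( x^\top (Q - H^\top R^{-1} H) x + u^\top R u\big)\,ds \ge \tfrac12\gamma_Q \int_t^T \|x(s)\|^2\,ds$ is strictly positive unless the optimal state trajectory is identically zero on $[t,T]$; but $x(t)=z\neq 0$ forces $x(\cdot)$ not to vanish identically (it is continuous), so the integral term is strictly positive, giving $J_t^*(z) > 0$. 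One subtlety is the degenerate endpoint $t = T$, where the integral is vacuous: there $S(T) = Q_T \succeq \gamma_Q I \succ 0$ by \Cref{assumption:matrices-are-bounded-below} directly, so positivity holds there too. Combining the interior case and the endpoint gives $S(t) \succ 0$ for all $t \in [0,T]$.

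An alternative, more self-contained argument for positive definiteness that avoids re-deriving the value-function identity is a monotonicity/comparison argument on the ODE itself. Write $S(t) = S_0(t) + \Delta(t)$ where $S_0$ solves the same Riccati equation with terminal data $0$ in place of $Q_T$; show $S_0 \succeq 0$ and then show the difference picks up the propagated terminal term, which stays positive semidefinite, plus use the coercivity $Q - H^\top R^{-1}H \succeq \gamma_Q I$ to force strict positivity via a Gr\"onwall-type lower bound on the smallest eigenvalue. I expect the main obstacle to be the technical care needed at exactly the bookkeeping level rather than anything deep: making the value-function identity $J_t^*(z) = \tfrac12 z^\top S(t) z$ rigorous in the continuous-time, time-varying setting (it is classical, and \Cref{theorem:hjbequation} essentially hands it to us once we match the shifted data to the HJB equation and verify the quadratic ansatz), handling the vacuous-integral endpoint $t=T$ separately, and confirming that the $u\equiv 0$ comparison control yields the needed finite upper bound uniformly in $t$ (which uses the uniform boundedness in \Cref{assumption:matrices-are-bounded} to keep $\|\Phi_{A-BR^{-1}H}(\cdot,\cdot)\|$ controlled on $[0,T]$). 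None of these steps is genuinely hard, but they must be assembled in the right order so that the no-blow-up conclusion (needed for $S$ to exist on all of $[0,T]$) is not circular with the a priori bound on $S$.
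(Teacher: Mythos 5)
Your proposal is correct and matches the paper's proof in its essential mechanism: both work through the shifted OCP \eqref{eqn:riccati-equivalent-problem}, identify $z^\top S(t)z$ with twice the optimal cost-to-go on $[t,T]$ (the paper derives this identity via the adjoint relation $\lambda^*(t)=S(t)x^*(t)$ and integrating $\tfrac{d}{dt}\big(x^{*\top}(t)\lambda^*(t)\big)$ rather than via the HJB quadratic ansatz, but it is the same identity), and then obtain strict positivity from the coercivity $Q-H^\top R^{-1}H\succeq \gamma_Q I$ combined with continuity of the optimal state near the initial time. The only divergence is that the paper imports existence, uniqueness, and positive semidefiniteness of $S$ from a cited classical result instead of running your Picard--Lindel\"of-plus-no-blow-up argument, which is a standard and acceptable substitute provided the a priori bound is established on the maximal existence interval exactly as you caution.
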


\begin{proof}
For Problem \eqref{eqn:riccati-equivalent-problem}, the existence and uniqueness of a positive semidefinite solution $S$ to the Riccati equation \eqref{eqn:riccati-equation} under Assumptions \ref{assumption:matrices-are-bounded} and \ref{assumption:matrices-are-bounded-below} is classical; see \cite[Equations 6.1--6.4]{Borkar2024Contributionsa}. We provide an~explicit~verification of strict positive definiteness. For any $t_0\in[0, T]$, we let $(x^*, u^*)$ denote the truncated optimal~solution of \eqref{eqn:riccati-equivalent-problem} on $[t_0, T]$. By the PMP necessary conditions in \Cref{theorem:pontryagin}, there exists a continuously differentiable adjoint trajectory $\lambda^*: [t_0,T]\rightarrow \rr^{n_x}$ satisfying $\lambda^*(T) = Q_Tx^*(T)$ and
\begin{equation}\label{nequ:5}
\dot{\lambda}^*(t) = -\big(A(t) - B(t)R^{-1}(t)H(t)\big)^\top\lambda^*(t) - \big(Q(t) - H^\top(t)R^{-1}(t)H(t)\big)x^*(t), \quad\quad t\in[t_0, T).
\end{equation} 
In fact, applying \Cref{theorem:state-closed-loop-equation} with $d = 0$ (so $v=0$), we know the optimal adjoint is explicitly given by~$\lambda^*(t) = S(t)x^*(t)$ and the optimal control is explicitly given by $u^*(t) = -R^{-1}(t)B^\top(t)\lambda^*(t)$. With these formulations and for any initial state $0\neq x_{t_0}\in \mR^{n_x}$, we have
\begin{multline}\label{nequ:6}
x_{t_0}^\top S(t_0)x_{t_0} = x^*(T)^\top Q_Tx^*(T) - \int_{t_0}^T \frac{d}{dt}(x^*(t)^\top S(t)x^*(t)) dt \\ = x^*(T)^\top Q_Tx^*(T) - \int_{t_0}^T \frac{d}{dt} \big(x^*(t)^ \top \lambda^*(t)\big)dt .
\end{multline}
On the other hand, we apply the chain rule and substitute the dynamics of $x^*$, $u^*$, and $\lambda^*$, and obtain for any $t\in(t_0, T)$,
\begin{align*}
\frac{d}{dt}  \big(x^*(t)^\top \lambda^*(t)\big) & = \dot{x}^*(t)^\top \lambda^*(t) + x^*(t)^\top \dot{\lambda}^*(t) \\
&\hskip-0.45cm \stackrel{\eqref{subeqn:shfitedpair}, \eqref{nequ:5}}{=} \cbr{(A(t)-B(t)R^{-1}(t)H(t))x^*(t) + B(t)u^*(t)}^\top \lambda^*(t) \\
& \quad - x^*(t)^\top \cbr{ \big(A(t) - B(t)R^{-1}(t)H(t)\big)^\top \lambda^*(t) + \big(Q(t) - H^\top(t)R^{-1}(t)H(t)\big) x^*(t) } \\
&= -x^*(t)^\top \big(Q(t) - H^\top(t) R^{-1}(t) H(t)\big) x^*(t) - \lambda^*(t)^\top B(t) R^{-1}(t) B^\top(t) \lambda^*(t) \\
& \leq -x^*(t)^\top \big(Q(t) - H^\top(t) R^{-1}(t) H(t)\big) x^*(t) \leq 0,
\end{align*} 
where the third equality is because $u^*(t) = -R^{-1}(t)B^\top(t)\lambda^*(t)$~and~the~last~two~\mbox{inequalities}~are~due~to~Assumption \ref{assumption:matrices-are-bounded-below}. Since $x_{t_0}\neq 0$, by the continuity of $x^*$, we know there exists $t_0'>t_0$ such that $x^*(t)\neq 0$ for any~$t\in[t_0, t_0']$. Within $[t_0, t_0']$, the above derivation implies $\frac{d}{dt} \big(x^*(t)^ \top \lambda^*(t)\big)<0$ strictly (by Assumption~\ref{assumption:matrices-are-bounded-below}). Thus,~we~have
\begin{equation*}
\int_{t_0}^T \frac{d}{dt} \big(x^*(t)^ \top \lambda^*(t)\big)dt \leq \int_{t_0}^{t_0'} \frac{d}{dt} \big(x^*(t)^ \top \lambda^*(t)\big)dt <0.
\end{equation*}
Combining the above display with \eqref{nequ:6} and noting that $x^*(T)^\top Q_Tx^*(T)\geq 0$, we have $x_{t_0}^\top S(t_0)x_{t_0}>0$. Since $x_{t_0}$ is any nonzero vector, we have $S(t_0)\succ 0$. This completes the proof.
\end{proof}

While~\Cref{lemma:riccati-positive-definite} ensures that the solution to the matrix Riccati equation exists and is positive definite~on 

\noindent $[0,T]$, this property alone is not sufficient to determine~how~\mbox{perturbations}~\mbox{propagate}.~In~our~\mbox{context},~it~is~important that $S$ is not only positive definite but also uniformly bounded from both above and below. To proceed, we require an intermediate result confirming that if the uniform controllability condition (\Cref{assumption:uniform-controllability}) holds for the system matrix pair $(A, B)$, then it also holds for the pair $(A - BR^{-1}H, B)$. The latter corresponds to the shifted system in \eqref{eqn:riccati-equivalent-problem}.

\begin{lemma}\label{lemma:ucc-carries-over}

Consider the linear system $\dot{x}(t) = A(t)x(t) + B(t)u(t)$. Suppose $A: [0,T] \rightarrow \mathbb{R}^{n_x \times n_x}$ and~$B: [0,T] \rightarrow \mathbb{R}^{n_x \times n_u}$ define a matrix pair $(A, B)$ satisfying Assumptions \ref{assumption:matrices-are-bounded} and \ref{assumption:uniform-controllability}. Then, for any continuous~mapping $F: [0,T] \to \rr^{n_u \times n_x}$ satisfying $\|F\|_\infty\leq \lambda_F$ for some constant $\lambda_F>0$ independent of $T$, the system
\begin{equation*}
\dot{x}(t) = (A(t) + B(t)F(t))x(t) + B(t)u(t)
\end{equation*}
also satisfies UCC in the sense of \eqref{eqn:uniformboundednessofgramian}. In particular, there exist constants $\alpha_0'(\sigma), \alpha_1'(\sigma), \beta_0'(\sigma), \beta_1'(\sigma) > 0$, depending on $\sigma$ from Assumption \ref{assumption:uniform-controllability} but independent of $T$, such that for every $t_0\in[0, T-\sigma]$, there exists a time $t_1 = t_1(t_0)\in[t_0, t_0+\sigma]\subseteq[0, T]$ for which the following bounds hold:
\begin{equation*}
\alpha_0'(\sigma) I \preceq W_{A+BF, B}(t_0, t_1) \preceq \alpha_1'(\sigma) I \quad \text{and}\quad \beta_0'(\sigma) I \preceq \Phi_{A+BF}(t_1, t_0) W_{A+BF, B}(t_0, t_1) \Phi_{A+BF}^\top(t_1, t_0) \preceq \beta_1'(\sigma) I.
\end{equation*}
(The expressions of $\alpha_0'(\sigma), \alpha_1'(\sigma), \beta_0'(\sigma), \beta_1'(\sigma) > 0$ are provided in \eqref{eqn:shifted-gramian-lower-bound}, \eqref{eqn:shifted-gramian-upper-bound}, and \eqref{eqn:define-gramian-phi-upper-lower-bound}.)
\end{lemma}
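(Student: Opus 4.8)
The plan is to relate the evolution operator and controllability Gramian of the shifted pair $(A+BF, B)$ to those of the original pair $(A, B)$ by a variation-of-constants argument, and then transfer the bounds in \Cref{assumption:uniform-controllability} through explicit (but $T$-independent) Gronwall-type constants. First I would fix $t_0 \in [0, T-\sigma]$ and let $t_1 = t_1(t_0) \in [t_0, t_0+\sigma]$ be the time guaranteed by \Cref{assumption:uniform-controllability} for the pair $(A,B)$; I will use the \emph{same} $t_1$ for the shifted pair. The key algebraic identity is the factorization of the shifted evolution operator: writing $M(t,s) := \Phi_{A+BF}(t,s)\,\Phi_A(s,t)$, one checks directly that $\frac{d}{dt}\Phi_{A+BF}(t,s) = (A(t)+B(t)F(t))\Phi_{A+BF}(t,s)$ together with the analogous equation for $\Phi_A$ yields $\Phi_{A+BF}(t,s) = \Phi_A(t,s) + \int_s^t \Phi_A(t,r) B(r) F(r) \Phi_{A+BF}(r,s)\,dr$. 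Applying Gronwall's inequality to $\|\Phi_{A+BF}(t,s)\|$ over an interval of length at most $\sigma$, and using $\|A\|_\infty \le \lambda_A$, $\|B\|_\infty\le\lambda_B$, $\|F\|_\infty\le\lambda_F$ (hence $\|\Phi_A(t,s)\|\le e^{\lambda_A\sigma}$), gives a bound $\|\Phi_{A+BF}(t,s)\| \le C_\Phi(\sigma) := e^{(\lambda_A+\lambda_B\lambda_F)\sigma}$ for all $t_0\le s\le t\le t_0+\sigma$, and by the same token $\|\Phi_{A+BF}(s,t)\|\le C_\Phi(\sigma)$; all such constants depend only on $\sigma$ and the uniform bounds, not on $T$ or $t_0$.

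Next I would compare the two Gramians. By definition,
\begin{equation*}
W_{A+BF,B}(t_0,t_1) = \int_{t_0}^{t_1} \Phi_{A+BF}(t_0,s) B(s) B^\top(s) \Phi_{A+BF}^\top(t_0,s)\,ds,
\end{equation*}
and similarly for $W_{A,B}$ with $\Phi_A$ in place of $\Phi_{A+BF}$. Using $\Phi_{A+BF}(t_0,s) = \Phi_A(t_0,s)\,N(s)$ where $N(s) := \Phi_A(s,t_0)\Phi_{A+BF}(t_0,s)$ satisfies $\underline{c}(\sigma) I \preceq N(s)N^\top(s)$-type bounds — more precisely, both $\|N(s)\|$ and $\|N(s)^{-1}\| = \|\Phi_{A+BF}(s,t_0)\Phi_A(t_0,s)\|$ are bounded above by $C_\Phi(\sigma) e^{\lambda_A\sigma}$ — I can sandwich $W_{A+BF,B}$ between scalar multiples of $\Phi_A(t_0,s)B(s)B^\top(s)\Phi_A^\top(t_0,s)$ integrated against these bounded factors. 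A cleaner route: for any unit vector $\xi$, $\xi^\top W_{A+BF,B}(t_0,t_1)\xi = \int_{t_0}^{t_1}\|B^\top(s)\Phi_{A+BF}^\top(t_0,s)\xi\|^2\,ds$; writing $\Phi_{A+BF}^\top(t_0,s)\xi = N(s)^\top \Phi_A^\top(t_0,s)\xi$ and using the two-sided operator bounds on $N(s)^\top$ gives $c_1(\sigma)\,\xi^\top W_{A,B}(t_0,t_1)\xi \le \xi^\top W_{A+BF,B}(t_0,t_1)\xi \le c_2(\sigma)\,\xi^\top W_{A,B}(t_0,t_1)\xi$ with $c_1,c_2$ depending only on $\sigma$ — wait, the lower sandwich needs care because bounding $\|N(s)^\top\Phi_A^\top(t_0,s)\xi\|$ from below in terms of $\|\Phi_A^\top(t_0,s)\xi\|$ uses $\|N(s)^{-\top}\|$, which is controlled. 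This yields $\alpha_0'(\sigma) := c_1(\sigma)\alpha_0(\sigma)$ and $\alpha_1'(\sigma) := c_2(\sigma)\alpha_1(\sigma)$. For the second pair of bounds involving $\Phi_{A+BF}(t_1,t_0) W_{A+BF,B} \Phi_{A+BF}^\top(t_1,t_0)$, I would again conjugate by the bounded, boundedly-invertible factor $M := \Phi_{A+BF}(t_1,t_0)\Phi_A(t_0,t_1)$ relating it to $\Phi_A(t_1,t_0)W_{A,B}\Phi_A^\top(t_1,t_0)$ and apply the same sandwich, producing $\beta_0'(\sigma), \beta_1'(\sigma)$.

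The main obstacle I anticipate is the \emph{lower} bounds ($\alpha_0'$ and $\beta_0'$): upper bounds follow immediately from operator-norm submultiplicativity and Gronwall, but to propagate positive-definiteness one must argue that conjugating a positive-definite matrix by an invertible matrix $N$ with $\|N^{-1}\|$ controlled keeps it bounded below by $\|N^{-1}\|^{-2}$ times the original lower bound — this is elementary linear algebra ($\xi^\top N^\top X N \xi = (N\xi)^\top X (N\xi) \ge \lambda_{\min}(X)\|N\xi\|^2 \ge \lambda_{\min}(X)\|N^{-1}\|^{-2}\|\xi\|^2$) but requires being careful that the relevant invertible factor really is $N$ and that its inverse's norm is among the Gronwall-controlled quantities. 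The remaining work is just bookkeeping: collecting the constants $C_\Phi(\sigma)$, $e^{\lambda_A\sigma}$ and the bounds $\alpha_0,\alpha_1,\beta_0,\beta_1$ into the explicit formulas for $\alpha_0',\alpha_1',\beta_0',\beta_1'$, and verifying $t_1-t_0\le\sigma$ is used only to make every exponential a function of $\sigma$ alone, so that $T$-independence is preserved exactly as in the hypotheses.
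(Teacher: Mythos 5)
Your setup (Gr\"onwall bounds on $\Phi_{A+BF}$, reuse of the same $t_1(t_0)$, reduction of the $\beta'$ bounds to the $\alpha'$ bounds via the operator estimates) matches the paper, and your upper bound $\alpha_1'(\sigma)$ goes through. The gap is exactly where you flagged it: the lower bound $\alpha_0'(\sigma)$. Your mechanism is to write $\Phi_{A+BF}(t_0,s)=\Phi_A(t_0,s)N(s)$ and ``sandwich'' $W_{A+BF,B}$ against $W_{A,B}$ using two-sided norm bounds on $N(s)$ and $N(s)^{-1}$. But the conjugating factor sits \emph{inside} the integral and depends on $s$: pointwise, the integrand of $W_{A+BF,B}$ is $\tilde N(s)\,X(s)\,\tilde N(s)^\top$ with $X(s)=\Phi_A(t_0,s)B(s)B^\top(s)\Phi_A^\top(t_0,s)$, which is rank-deficient whenever $n_u<n_x$, so $\lambda_{\min}(X(s))=0$ and your linear-algebra step $\xi^\top N^\top X N\xi\ge\lambda_{\min}(X)\|N^{-1}\|^{-2}\|\xi\|^2$ yields nothing pointwise; and there is no fixed matrix to pull outside the integral. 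Worse, the implicit claim that $\int \tilde N(s)X(s)\tilde N(s)^\top ds\succeq c\int X(s)\,ds$ whenever $\|\tilde N(s)\|,\|\tilde N(s)^{-1}\|$ are uniformly bounded is false for general PSD integrands: in $\mathbb{R}^2$ take $X(s)=x(s)x(s)^\top$ with $x=e_1$ on $[0,\tfrac12]$, $x=e_2$ on $(\tfrac12,1]$, and $\tilde N(s)$ equal to $I$ and then a $90^\circ$ rotation, so that $\tilde N(s)x(s)\equiv e_1$; the left integral is singular while the right is $\tfrac12 I$. So preservation of the Gramian's lower bound under feedback cannot be extracted from operator-norm bounds on the conjugating factor alone --- it genuinely uses the coupling between $B$ and the dynamics.

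The paper closes this gap with a control-energy argument rather than a matrix sandwich. Given the minimum-energy steering control $u(t;v)$ for $(A,B)$ from \Cref{lemma:controllabilitygramian}, the modified control $\tilde u(t;v)=u(t;v)-F(t)x(t;v)$ drives the feedback system $(A+BF,B)$ along the \emph{same} trajectory to the origin at $t_1$; the variational identity $v^\top W_{A+BF,B}^{-1}(t_0,t_1)v=\min\int_{t_0}^{t_1}\|u\|^2\,dt$ over all zero-steering controls then gives
\begin{equation*}
v^\top W_{A+BF,B}^{-1}(t_0,t_1)v \le \int_{t_0}^{t_1}\|\tilde u(t;v)\|^2\,dt \le 2\,v^\top W_{A,B}^{-1}(t_0,t_1)v + 2\lambda_F^2\int_{t_0}^{t_1}\|x(t;v)\|^2\,dt,
\end{equation*}
and the last integral is bounded using the explicit mild-solution formula for $x(t;v)$ together with \eqref{eqn:gramian-can-only-grow}. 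This yields an upper bound on $W_{A+BF,B}^{-1}$, i.e.\ the desired $\alpha_0'(\sigma)$ in \eqref{eqn:shifted-gramian-lower-bound}. If you want to complete your proof, you need to replace the sandwich step for the lower bound with an argument of this type (or an equivalent observability/energy argument); the rest of your outline is sound.
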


\begin{proof}
By Assumption \ref{assumption:matrices-are-bounded}, we note that for any $0\le s\le t\le T$ (see \cite[Theorem 5.2(i)]{Pazy1983Semigroups}),
\begin{equation}\label{nequ:7}
\|\Phi_{A+BF}(t, s)\| \le \exp\left(\int_s^t\|A(\tau)+B(\tau)F(\tau)\|d\tau\right) \le e^{\left(\lambda_A+\lambda_B\lambda_F\right)(t-s)}.
\end{equation} 
Furthermore, we note that
\begin{align*}
1 & = \|I\| = \|\Phi_{A+BF}(t,s)\Phi^{-1}_{A+BF}(t,s)\| = \|\Phi_{A+BF}(t,s)\Phi_{A+BF}(s,t)\| \\
& = \|\Phi_{A+BF}(t,s)\Phi_{-(A+BF)^{\top}}^\top(t,s)\| \le \|\Phi_{A+BF}(t,s)\|\cdot\|\Phi_{-(A+BF)^{\top}}(t,s)\| \leq \|\Phi_{A+BF}(t,s)\| e^{\left(\lambda_A+\lambda_B\lambda_F\right)(t-s)},
\end{align*}
where the last inequality is due to \eqref{nequ:7}. The above inequality implies 
\begin{equation}\label{eqn:lower-bound-of-evolution-operator}
\|\Phi_{A+BF}(t,s)\| \ge e^{-\left(\lambda_A+\lambda_B\lambda_F\right)(t-s)}.
\end{equation}
Now, let us consider bounding the controllability Gramian for the pair $(A+BF,B)$. For any $t_0\in [0, T-\sigma]$~and $0\neq v \in\rr^{n_x}$, we let $t_1 = t_1(t_0)$ from Assumption \ref{assumption:uniform-controllability}. Then, by the definition \eqref{eqn:definegrammain} we have
\begin{align*}
& v^{\top}W_{A+BF,B}(t_0, t_1)v =\int_{t_0}^{t_1}v^\top \Phi_{A+BF}(t_0, s)B(s)B^{\top}(s)\Phi_{A+BF}^{\top}(t_0,s) vds =  \int_{t_0}^{t_1} \|B^{\top}(s)\Phi_{A+BF}^{\top}(t_0,s)v\|^2ds \\
& \leq  \lambda_B^2\|v\|^2\int_{t_0}^{t_1}\|\Phi_{-(A+BF)^\top}(s, t_0)\|^2ds \le \lambda_B^2\|v\|^2\int_{t_0}^{t_1}e^{2(\lambda_A+\lambda_B\lambda_F)(s - t_0)}ds \le\lambda_B^2\|v\|^2 \int_{t_0}^{t_0+\sigma}e^{2(\lambda_A+\lambda_B\lambda_F)(s-t_0)}ds \\
& = \frac{\lambda_B^2\|v\|^2}{2(\lambda_A + \lambda_B \lambda_F)} \rbr{\exp\cbr{2\sigma(\lambda_A + \lambda_B \lambda_F)}-1},
\end{align*}
which implies
\begin{equation}\label{eqn:shifted-gramian-upper-bound}
\|W_{A+BF,B}(t_0, t_1)\| \leq \frac{\lambda_B^2\rbr{\exp\cbr{2\sigma(\lambda_A + \lambda_B \lambda_F)}-1}}{2(\lambda_A + \lambda_B \lambda_F)}  \eqqcolon \alpha_1'(\sigma).
\end{equation}
For the lower bound, given any $0\neq v\in \rr^{n_x}$, we consider the zero-steering control defined on $[t_0, t_1]$ to be $u(t;v) := -B^{\top}(t)\Phi_A^{\top}(t_0,t)W_{A,B}^{-1}(t_0,t_1)v$ (see~\Cref{lemma:controllabilitygramian}). With this choice of control, the solution of~the~system $\dot{x}(t) = A(t)x(t) + B(t)u(t;v)$, $t\in(t_0, t_1]$, $x(t_0)= v$, denoted by $x(t;v)$, satisfies \mbox{$x(t_1,v) = 0$}.~Then,~let~us~consider the system $(A+BF, B)$. Define the control trajectory
\begin{equation}\label{eqn:shifted-problem-zero-steering-control}
\tilde{u}(t;v) := u(t;v) - F(t)x(t; v), \quad \quad \text{ for }\;\;  t\in[t_0, t_1],
\end{equation}
and the corresponding state trajectory of the system with the above control: $\dot{x}(t) = (A(t)+B(t)F(t))x(t)+B(t)\tilde{u}(t;v)$, $t\in(t_0, t_1]$, $x(t_0) = v$, denoted by $\tilde{x}(t; v)$, satisfies $\tilde{x}(t; v) = x(t;v)$, $\forall t\in[t_0, t_1]$. To see this, we note that
\begin{align*}
\dot{\tilde{x}}(t; v) - \dot{x}(t; v) & = (A(t)+B(t)F(t))\tilde{x}(t;v) + B(t)\tilde{u}(t;v) - A(t)x(t,v) - B(t)u(t,v)\\
& = (A(t)+B(t)F(t))\tilde{x}(t;v) + \cancel{B(t)u(t;v)} - B(t)F(t)x(t;v) - A(t)x(t;v)-\cancel{B(t)u(t;v)} \\
& = (A(t)+B(t)F(t))(\tilde{x}(t; v) - x(t;v)).
\end{align*}
Combining the above display with the fact that $\tilde{x}(t_0;v) - x(t_0;v) = 0$, we know $\tilde{x}(t; v) = x(t;v)$, $\forall t\in[t_0, t_1]$. In particular, $\tilde{x}(t_1;v) = x(t_1;v) = 0$. On the other hand, by \cite{Zaitsev2015Criteria} and Lemma \ref{lemma:controllabilitygramian}, we know $W_{A+BF,B}(t_0,t_1)$~is~positive definite, and the control trajectory
\begin{equation}\label{nequ:8}
\bar{u}(t; v) \coloneqq -B^{\top}(t)\Phi_{A+BF}^{\top}(t_0,t)W_{A+BF, B}^{-1}(t_0, t_1)v, \quad\; t\in [t_0, t_1]
\end{equation}
steers the state trajectory $x(t)$ of $(A+BF, B)$ from $v$ at $t_0$ to 0 at $t_1$. In fact, by \cite[Section 3.5, Theorem 5]{Sontag1998Mathematical}, $\bar{u}(t; v)$ is the unique control trajectory that attains the minimum square norm. That is,
\begin{equation*}
\int_{t_0}^{t_1}\|\bar{u}(t;v)\|^2dt \leq \int_{t_0}^{t_1}\|\tilde{u}(t;v)\|^2dt.
\end{equation*}
For the left-hand side, we have
\begin{align*}
\int_{t_0}^{t_1}\|\bar{u}(t;v)\|^2dt & \; = \int_{t_0}^{t_1}\bar{u}^{\top}(t;v)\bar{u}(t;v)dt \\
& \; \stackrel{\mathclap{\eqref{nequ:8}}}{=} \int_{t_0}^{t_1}v^{\top}W_{A+BF,B}^{-1}(t_0,t_1)\Phi_{A+BF}(t_0,t)B(t)B^{\top}(t)\Phi_{A+BF}^{\top}(t_0,t)W_{A+BF,B}^{-1}(t_0,t_1)vdt\\
& \;\stackrel{\mathclap{\eqref{eqn:definegrammain}}}{=} v^{\top}W_{A+BF,B}^{-1}(t_0,t_1)v.
\end{align*}
Combining the above two displays, we apply Assumption \ref{assumption:uniform-controllability} and the boundedness of $F(t)$, and obtain
\begin{align}\label{nequ:9}
& v^{\top}W_{A+BF,B}^{-1}(t_0,t_1)v  \le \int_{t_0}^{t_1}\|\tilde{u}(t;v)\|^2dt \stackrel{\eqref{eqn:shifted-problem-zero-steering-control}}{=} \int_{t_0}^{t_1}\| u(t;v) - F(t)x(t;v)\|^2dt \nonumber\\
& \le 2\int_{t_0}^{t_1}\|u(t;v)\|^2dt + 2\int_{t_0}^{t_1}\|F(t)x(t;v)\|^2dt = 2v^{\top}W_{A,B}^{-1}(t_0,t_1)v + 2\int_{t_0}^{t_1}\|F(t)x(t;v)\|^2dt \nonumber\\
& \le 2\alpha_0^{-1}(\sigma)\|v\|^2 + 2\lambda_F^2\int_{t_0}^{t_1}\|x(t;v)\|^2dt.
\end{align}
To upper bound the second term on the right-hand side, we use the following explicit expression of $x(t;v)$ (by applying~\eqref{eqn:mild-solution}):
\begin{align*}
x(t;v) &= \Phi_A(t,t_0)v + \int_{t_0}^t\Phi_A(t,s)B(s)u(s;v)ds \\
& = \Phi_A(t,t_0)\left(I - W_{A,B}(t_0,t)W^{-1}_{A,B}(t_0,t_1)\right)v, \quad t\in [t_0,t_1].
\end{align*}
From the definition~\eqref{eqn:definegrammain}, we have for any $t\in [t_0,t_1]$ that
\begin{equation}\label{eqn:gramian-can-only-grow}
W_{A,B}(t_0,t_1) = W_{A,B}(t_0,t)+W_{A,B}(t,t_1) \succeq W_{A,B}(t_0,t).
\end{equation}
Therefore, we have
\begin{align*}
 \int_{t_0}^{t_1}\|x(t;v)\|^2dt  & = \int_{t_0}^{t_1}\left\|\Phi_A(t,t_0)\left(I - W_{A,B}(t_0,t)W_{A,B}^{-1}(t_0,t_1)	\right)v\right\|^2dt \\
& \stackrel{\mathclap{\eqref{eqn:gramian-can-only-grow}}}{\le} \int_{t_0}^{t_1}e^{2\lambda_A\left(t-t_0\right)}\cdot\left( 
1 + \frac{\alpha_1(\sigma)}{\alpha_0(\sigma)}\right)^2\|v\|^2ds \le \int_{t_0}^{t_0+\sigma}e^{2\lambda_A\left(t-t_0\right)}\cdot\left( 
1 + \frac{\alpha_1(\sigma)}{\alpha_0(\sigma)}\right)^2\|v\|^2dt \\
& = \frac{1}{2\lambda_A}\left( 1 + \frac{\alpha_1(\sigma)}{\alpha_0(\sigma)}\right)^2\left(e^{2\lambda_A\sigma} - 1\right)\|v\|^2.
\end{align*}
Plugging the above display into \eqref{nequ:9}, we obtain
\begin{equation*}
v^{\top}W_{A+BF,B}^{-1}(t_0,t_1)v \le \left( \frac{2}{\alpha_0(\sigma)} + \frac{\lambda_F^2}{\lambda_A}\left( 1 + \frac{\alpha_1(\sigma)}{\alpha_0(\sigma)}\right)^2\cbr{\exp(2\lambda_A\sigma)-1}\right)\|v\|^2.
\end{equation*} 
Since $v\neq 0\in \rr^{n_x}$ is arbitrary and $W_{A+BF,B}(t_0,t_1)$ is positive definite, this proves that 
\begin{equation}  \label{eqn:shifted-gramian-lower-bound}
W_{A+BF,B}(t_0,t_1) \succeq \alpha_0'(\sigma)I \quad\text{with}\quad \alpha_0'(\sigma) =\left( \frac{2}{\alpha_0(\sigma)} + \frac{\lambda_F^2}{\lambda_A}\left( 1 + \frac{\alpha_1(\sigma)}{\alpha_0(\sigma)}\right)^2\cbr{\exp(2\lambda_A\sigma)-1}\right)^{-1}.
\end{equation}
Furthermore, we define
\begin{equation}\label{eqn:define-gramian-phi-upper-lower-bound}
\beta_0'(\sigma)\coloneqq \alpha_0'(\sigma)\cdot e^{-2\left(\lambda_A+\lambda_B\lambda_F\right)\sigma} \quad\quad\quad \text{and}\quad\quad\quad \beta_1'(\sigma) \coloneqq \alpha_1'(\sigma)\cdot e^{2\left(\lambda_A+\lambda_B\lambda_F\right)\sigma},
\end{equation}
and obtain for any $0\neq v\in \rr^{n_x}$ that
\begin{align*}
v^{\top}\Phi_{A+BF}(t_1,t_0)W_{A+BF,B}(t_0,t_1)\Phi_{A+BF}^{\top}(t_1,t_0)v & \stackrel{\eqref{nequ:7}}{\leq} \alpha_1'(\sigma)\cdot e^{2\left(\lambda_A+\lambda_B\lambda_F\right)\sigma}\|v\|^2
= \beta_1'(\sigma)\|v\|^2,\\
v^{\top}\Phi_{A+BF}(t_1,t_0)W_{A+BF, B}(t_0,t_1)\Phi_{A+BF}^{\top}(t_1,t_0)v & \stackrel{\eqref{eqn:lower-bound-of-evolution-operator}}{\geq} \alpha_0'(\sigma)\cdot e^{-2\left(\lambda_A+\lambda_B\lambda_F\right)\sigma}\|v\|^2 = \beta_0'(\sigma)\|v\|^2.
\end{align*}
Thus, we conclude the proof.
\end{proof}

The above lemma is helpful because we effectively require UCC of the matrix pair $(A-BR^{-1}H, B)$~to~establish the upper and lower bounds of Riccati equation solution $S(t)$ (cf. \Cref{theorem:shifted-ocp}) and to further analyze perturbations in $d$ for the parameterized problem \eqref{eqn:parameterized-linear-quadratic-program}, while Assumption \ref{assumption:uniform-controllability} imposes UCC only on $(A,B)$.~The~above lemma thus provides the necessary connection. 
We emphasize that the UCC property is~not required on~the interval $[T-\sigma, T]$ since this segment has length at most $\sigma$. The upper and lower bounds on relevant quantities are effectively guaranteed by the explicit relationship between the Riccati matrix $S$ and the optimal cost functional (details in Appendix~\ref{appendix:proof-main-proposition}). We now state the boundedness result for $S(t)$.

\begin{lemma} \label{proposition:mainprop}
Under Assumptions~\ref{assumption:matrices-are-bounded}--\ref{assumption:additional-boundedness-for-parameterized}, there exist constants $c_0(\sigma), c_1(\sigma)>0$ independent of $T$ such that~the unique solution of the matrix Riccati equation~\eqref{eqn:riccati-equation} satisfies
\begin{equation*}
c_0(\sigma)I \preceq S(t_0) \preceq c_1(\sigma)I, \quad\; \text{for all }\; t_0 \in [0,T].
\end{equation*} 
(The expressions of $c_0(\sigma), c_1(\sigma)>0$ are provided in \eqref{eqn:final-riccati-bounds}.)
\end{lemma}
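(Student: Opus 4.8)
The plan is to reduce the bound on $S$ to the \emph{shifted} OCP of \Cref{theorem:shifted-ocp}, whose Riccati matrix is exactly $S$, whose running weights are $Q-H^\top R^{-1}H\succeq\gamma_Q I$ on the state and $R\succeq\gamma_R I$ on the control, and which satisfies $S(T)=Q_T$ with $\gamma_Q I\preceq Q_T\preceq\lambda_Q I$ by \Cref{assumption:matrices-are-bounded-below}. Fix $t_0\in[0,T]$, let $(x^*,u^*)$ be the optimal solution of \eqref{eqn:riccati-equivalent-problem} restricted to $[t_0,T]$ with $x^*(t_0)=x_{t_0}$, and set $\lambda^*=Sx^*$. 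Repeating the computation of $\tfrac{d}{dt}\big(x^*(t)^\top\lambda^*(t)\big)$ from the proof of \Cref{lemma:riccati-positive-definite} and integrating over $[t_0,T]$ (using $S(T)=Q_T$ and $u^*=-R^{-1}B^\top\lambda^*$) yields the cost-to-go identity
\[
x_{t_0}^\top S(t_0)x_{t_0}=\int_{t_0}^{T}\!\Big(x^*(t)^\top\big(Q(t)-H^\top(t)R^{-1}(t)H(t)\big)x^*(t)+u^*(t)^\top R(t)u^*(t)\Big)dt+x^*(T)^\top Q_T x^*(T).
\]
The upper bound then follows by replacing $(x^*,u^*)$ on the right-hand side by any feasible pair, and the lower bound by estimating the right-hand side from below for the optimal pair. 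Throughout I take $F:=-R^{-1}H$, so that $\|F\|_\infty\le\lambda_H/\gamma_R=:\lambda_F$ is independent of $T$, and \Cref{lemma:ucc-carries-over} transfers UCC to the shifted pair $(A-BR^{-1}H,B)=(A+BF,B)$ with $\sigma$-dependent, $T$-independent constants $\alpha_0'(\sigma),\alpha_1'(\sigma)$, together with $\|\Phi_{A+BF}(t,s)\|\le e^{(\lambda_A+\lambda_B\lambda_F)|t-s|}$.

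\textbf{Upper bound.} I would split on $t_0$. If $t_0\le T-\sigma$, take the control of \Cref{lemma:controllabilitygramian} for $(A+BF,B)$ that steers $x_{t_0}$ to the origin over $[t_0,t_1]$ for some $t_1\in[t_0,t_0+\sigma]$, followed by $u\equiv0$ (hence $x\equiv0$) on $[t_1,T]$. Then the control cost equals $x_{t_0}^\top W_{A+BF,B}^{-1}(t_0,t_1)x_{t_0}\le\|x_{t_0}\|^2/\alpha_0'(\sigma)$, the state cost is controlled using the explicit formula $x(t)=\Phi_{A+BF}(t,t_0)\big(I-W_{A+BF,B}(t_0,t)W_{A+BF,B}^{-1}(t_0,t_1)\big)x_{t_0}$ from the proof of \Cref{lemma:ucc-carries-over} together with $Q-H^\top R^{-1}H\preceq\lambda_Q I$, and the terminal cost vanishes. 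If $t_0>T-\sigma$, take $u\equiv0$ on $[t_0,T]$, so $\|x(t)\|\le e^{(\lambda_A+\lambda_B\lambda_F)\sigma}\|x_{t_0}\|$ and the total cost is at most $\tfrac12\lambda_Q(1+\sigma)e^{2(\lambda_A+\lambda_B\lambda_F)\sigma}\|x_{t_0}\|^2$ (using $Q-H^\top R^{-1}H\preceq\lambda_Q I$ and $Q_T\preceq\lambda_Q I$). Taking $c_1(\sigma)$ to be the larger of the two resulting constants gives $S(t_0)\preceq c_1(\sigma)I$.

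\textbf{Lower bound, and the main obstacle.} The lower bound is the crux. By \Cref{assumption:matrices-are-bounded-below} the cost-to-go identity gives $x_{t_0}^\top S(t_0)x_{t_0}\ge\gamma_Q\int_{t_0}^{t_1}\|x^*\|^2dt+\gamma_R\int_{t_0}^{t_1}\|u^*\|^2dt$ with $t_1:=\min\{t_0+\sigma,T\}$, plus the extra term $\gamma_Q\|x^*(T)\|^2$ when $t_1=T$. To turn this into a multiple of $\|x_{t_0}\|^2$ I would use the mild-solution identity $x_{t_0}=\Phi_{A+BF}(t_0,t)x^*(t)-\int_{t_0}^{t}\Phi_{A+BF}(t_0,s)B(s)u^*(s)\,ds$, valid for every $t\in[t_0,t_1]$; the evolution-operator bound and Cauchy--Schwarz give $\|x_{t_0}\|^2\le 2e^{2(\lambda_A+\lambda_B\lambda_F)\sigma}\big(\|x^*(t)\|^2+\lambda_B^2\sigma\int_{t_0}^{t_1}\|u^*\|^2ds\big)$. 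When $t_0\le T-\sigma$, integrating this in $t$ over $[t_0,t_0+\sigma]$ bounds $\|x_{t_0}\|^2$ by a $\sigma$-dependent multiple of $\tfrac1\sigma\int_{t_0}^{t_0+\sigma}\|x^*\|^2dt+\int_{t_0}^{t_0+\sigma}\|u^*\|^2dt$; when $t_0>T-\sigma$, evaluating it at $t=T$ and using the $\gamma_Q\|x^*(T)\|^2$ term covers that segment even though UCC is unavailable on $[T-\sigma,T]$. Either way $x_{t_0}^\top S(t_0)x_{t_0}\ge c_0(\sigma)\|x_{t_0}\|^2$ for a $T$-independent $c_0(\sigma)>0$, hence $S(t_0)\succeq c_0(\sigma)I$ (which also re-derives \Cref{lemma:riccati-positive-definite}). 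I expect this step to be the main difficulty: the naive estimate $\ge\tfrac12\gamma_Q\int\|x^*\|^2$ is vacuous when the optimal state decays fast, so one must quantify that fast decay is necessarily paid for by control effort (a reverse-controllability estimate), and separately handle the terminal window through the coercive terminal cost $Q_T\succeq\gamma_Q I$ rather than through controllability. The remaining work is bookkeeping: tracking the explicit $\sigma$-dependence of all constants and checking, via \Cref{assumption:matrices-are-bounded} and \Cref{assumption:matrices-are-bounded-below}, that none depends on $T$.
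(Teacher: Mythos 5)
Your proposal is correct, and your upper bound is essentially the paper's: the same cost-to-go identity, the same zero-steering control from \Cref{lemma:controllabilitygramian}/\Cref{lemma:ucc-carries-over} on $[0,T-\sigma]$, and the same zero-control fallback on the terminal window. Where you genuinely diverge is the lower bound. The paper passes to the inverse Riccati matrix $U=S^{-1}$, observes that $U$ solves the Riccati equation of a \emph{dual} LQ problem with system pair $(-(A-BR^{-1}H)^\top, I)$ (equation \eqref{eqn:inversericcatiequation} and problem \eqref{nequ:12}), establishes UCC for that dual pair in a separate lemma (\Cref{lemma:dual-controllability}), and then upper-bounds $y^\top U(t_0)y$ by plugging a feasible steering (resp.\ zero) control into the dual problem; an upper bound on $S^{-1}$ is a lower bound on $S$. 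You instead prove the coercivity directly: from the variation-of-constants formula $x_{t_0}=\Phi_{A+BF}(t_0,t)x^*(t)-\int_{t_0}^{t}\Phi_{A+BF}(t_0,s)B(s)u^*(s)\,ds$ you deduce that $\|x_{t_0}\|^2$ is controlled by $\tfrac1\sigma\int\|x^*\|^2+\sigma\lambda_B^2\int\|u^*\|^2$ (or by $\|x^*(T)\|^2$ plus control energy on the terminal window), which the running cost and the coercive terminal cost dominate up to $T$-independent constants; this is a quantitative ``observability of the cost'' estimate. Your route is more elementary and self-contained -- it avoids inverting $S$, avoids the dual problem and \Cref{lemma:dual-controllability} altogether, and makes transparent that the lower bound on $S$ needs no controllability at all (only boundedness of $A,B,H,R$ and the coercivity constants $\gamma_Q,\gamma_R$, plus $Q_T\succeq\gamma_Q I$ near $t=T$); the paper's route buys a symmetric treatment of the two bounds via duality and reuses the same Gramian machinery, at the cost of an extra lemma and of requiring $Q-H^\top R^{-1}H\succ0$ to be invertible in the dual weight. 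Your explicit constants will differ from those in \eqref{eqn:final-riccati-bounds}, but the downstream results only use the existence of $T$-independent $c_0(\sigma),c_1(\sigma)$, so nothing breaks.
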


\begin{proof} 
The proof is provided in Appendix~\ref{appendix:proof-main-proposition}.
\end{proof}

We have established that the Riccati matrix is uniformly bounded from above and below on the entire~solution interval $[0,T]$ with constants independent of $T$. This result provides the foundation for analyzing~sensitivity properties of the optimal state system~\eqref{eqn:state-closed-loop-system}. The result is summarized in the next lemma.

\begin{lemma}[Exponential stability of optimal state system] \label{proposition:feedback-stable}

\hskip-0.1cm Define the following time-dependent matrix:
\begin{equation}\label{eqn:Zdefinition}
Z(t) := A(t) - B(t) R^{-1}(t) H(t) - B(t) R^{-1}(t) B^\top(t) S(t), \quad\quad t\in[0, T].
\end{equation}
Consider the homogeneous system $\dot{x}(t) = Z(t)x(t)$. Under Assumptions \ref{assumption:matrices-are-bounded}--\ref{assumption:additional-boundedness-for-parameterized}, the corresponding solution evolution operator satisfies the exponential decay bound: for any $0\le t_0\le t_1\le T$,
\begin{equation}  \label{eqn:operator-exponential-bound}
\|\Phi_Z(t_1, t_0)\| \le c_Z(\sigma) e^{-\rho_Z(\sigma) (t_1 - t_0)},
\end{equation}
where
\begin{equation}\label{nequ:18}
c_Z(\sigma) = \sqrt{\frac{c_1(\sigma)}{c_0(\sigma)}} \quad\quad\text{ and }\quad\quad \rho_Z(\sigma)= \frac{\gamma_Q}{2c_1(\sigma)}.
\end{equation}
\end{lemma}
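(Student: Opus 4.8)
The plan is to use the Riccati solution $S$ itself as a time-varying Lyapunov function for the homogeneous flow $\dot{x}(t) = Z(t)x(t)$. Fix $0 \le t_0 \le t_1 \le T$ and an arbitrary $x_0 \in \mathbb{R}^{n_x}$, let $x(t) = \Phi_Z(t,t_0)x_0$, and set $V(t) := x(t)^\top S(t) x(t)$. Both factors are continuously differentiable: $S$ by \Cref{lemma:riccati-positive-definite} (as the solution of the Riccati ODE \eqref{eqn:riccati-equation}), and $x$ as the solution of a linear ODE whose coefficient $Z$ is bounded and continuous by Assumptions~\ref{assumption:matrices-are-bounded}, \ref{assumption:matrices-are-bounded-below} and \Cref{proposition:mainprop}; moreover $V(t)$ is two-sided comparable to $\|x(t)\|^2$ through the bounds $c_0(\sigma) I \preceq S(t) \preceq c_1(\sigma) I$ of \Cref{proposition:mainprop}.

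First I would differentiate $V$ along the flow, obtaining $\dot{V}(t) = x(t)^\top\big(Z^\top S + \dot{S} + S Z\big)(t)\, x(t)$. Writing $\tilde{A} := A - BR^{-1}H$ so that $Z = \tilde{A} - BR^{-1}B^\top S$ by \eqref{eqn:Zdefinition}, one has $Z^\top S + SZ = \tilde{A}^\top S + S\tilde{A} - 2SBR^{-1}B^\top S$; substituting the Riccati equation \eqref{eqn:riccati-equation} for $\dot{S}$, the terms $\tilde{A}^\top S + S\tilde{A}$ cancel and one $SBR^{-1}B^\top S$ survives with a flipped sign, leaving
\[
\dot{V}(t) = -x(t)^\top S(t) B(t) R^{-1}(t) B^\top(t) S(t)\, x(t) \;-\; x(t)^\top\big(Q(t) - H^\top(t)R^{-1}(t)H(t)\big) x(t).
\]
The first term is nonpositive since $R^{-1}(t) \succ 0$, and the second is at most $-\gamma_Q\|x(t)\|^2$ by \Cref{assumption:matrices-are-bounded-below}, so $\dot{V}(t) \le -\gamma_Q\|x(t)\|^2$. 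Using $\|x(t)\|^2 \ge V(t)/c_1(\sigma)$ from the upper bound on $S$ converts this into the scalar differential inequality $\dot{V}(t) \le -\tfrac{\gamma_Q}{c_1(\sigma)}V(t)$, and Grönwall gives $V(t_1) \le e^{-\frac{\gamma_Q}{c_1(\sigma)}(t_1-t_0)}V(t_0)$. Finally, applying $c_0(\sigma) I \preceq S(t_1)$ on the left and $S(t_0) \preceq c_1(\sigma) I$ on the right yields $\|x(t_1)\|^2 \le \frac{c_1(\sigma)}{c_0(\sigma)}e^{-\frac{\gamma_Q}{c_1(\sigma)}(t_1-t_0)}\|x_0\|^2$; taking square roots and recalling that $x_0$ was arbitrary with $x(t_1) = \Phi_Z(t_1,t_0)x_0$ gives \eqref{eqn:operator-exponential-bound} with exactly $c_Z(\sigma) = \sqrt{c_1(\sigma)/c_0(\sigma)}$ and $\rho_Z(\sigma) = \gamma_Q/(2c_1(\sigma))$.

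I do not expect a real obstacle here: the substance is entirely carried by \Cref{proposition:mainprop} (the uniform two-sided bounds on $S$, proved earlier via the cost-functional representation of the Riccati matrix) and by the coercivity constant $\gamma_Q$ from \Cref{assumption:matrices-are-bounded-below}, which supplies the dissipation rate. The only points requiring a little care are verifying that $V$ is genuinely $C^1$ so that the fundamental theorem of calculus and Grönwall apply — which follows from the differentiability of $S$ and $x$ noted above — and the routine bookkeeping in the cancellation after substituting the Riccati equation; neither is delicate.
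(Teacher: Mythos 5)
Your proposal is correct and follows essentially the same route as the paper: the paper's proof also takes $V(t)=x(t)^\top S(t)x(t)$ as a Lyapunov function, substitutes the Riccati equation \eqref{eqn:riccati-equation} to get $\dot V \le -\gamma_Q\|x(t)\|^2 \le -\tfrac{\gamma_Q}{c_1(\sigma)}V$, and concludes via Gr\"onwall and the two-sided bounds of \Cref{proposition:mainprop}. The algebraic cancellation you describe is exactly the one carried out there, and your constants match.
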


\begin{proof}
Let us define the candidate Lyapunov function $V(t, x(t)) \coloneqq x(t)^\top S(t) x(t)$, $t\in[t_0, t_1]$. In particular, when $x(t)$ follows the dynamics $\dot{x}(t) = Z(t)x(t)$, we have
\begin{align*}
\frac{d}{dt} V(t, x(t)) & = \dot{x}^\top(t) S(t) x(t) + x^\top(t) \dot{S}(t) x(t) + {x}^\top(t) S(t) \dot{x}(t) \\
& = x^\top(t)Z^\top(t)S(t)x(t) + x^\top(t) \dot{S}(t) x(t) + x^\top(t)S(t)Z(t)x(t)\\
& = -{x}^\top(t) S(t) B(t) R^{-1}(t) B^\top(t) S(t) x(t) - {x}^\top(t) \big(Q(t) - H^\top(t) R^{-1}(t) H(t)\big) x(t) \\
& \leq - \gamma_Q\|x(t)\|^2 \leq - \frac{\gamma_Q}{c_1(\sigma)} V(t,x(t)),
\end{align*}
where the third equality is due to \eqref{eqn:Zdefinition} and \eqref{eqn:riccati-equation}, and~the~last~\mbox{inequality}~is~due~to~\Cref{proposition:mainprop}.~By~\mbox{Gr\"onwall's}~inequality, this implies that for any $t\in[t_0, t_1]$,
\begin{multline*}
c_0(\sigma)\|\Phi_{Z}(t,t_0)x(t_0)\|^2 = c_0(\sigma)\|x(t)\|^2 \le {x}^{\top}(t)S(t)x(t) = V(t, x(t)) \leq V(t_0, x(t_0)) \exp\left( -\frac{\gamma_Q}{c_1(\sigma)}(t - t_0) \right) \\
= {x}^{\top}(t_0)S(t_0){x}(t_0)\exp\left( -\frac{\gamma_Q}{c_1(\sigma)}(t - t_0) \right) \leq c_1(\sigma)\exp\left( -\frac{\gamma_Q}{c_1(\sigma)}(t - t_0) \right)\|x(t_0)\|^2.
\end{multline*}
This leads to
\begin{equation*}
\|\Phi_{Z}(t,t_0)x(t_0)\| \leq \sqrt{\frac{c_1(\sigma)}{c_0(\sigma)}}\exp\left( 
-\frac{\gamma_Q}{2c_1(\sigma)}\left(t-t_0\right)
\right)\|x(t_0)\|.
\end{equation*}
Since $x(t_0)$ can be any vector in $\mR^{n_x}$, we apply the fact that $\|\Phi_Z(t,t_0)\| := \max_{0\neq x\in\rr^{n_x}}\|\Phi_Z(t,t_0)x\|/\|x\|$~and complete the proof.
\end{proof}

\Cref{proposition:feedback-stable} provides a direct consequence which we call exponential decay of sensitivity (EDS). That is, the effect of a single-point perturbation on the optimal solutions of our parameterized problem~\eqref{eqn:parameterized-linear-quadratic-program} is damped exponentially fast as one moves away from the perturbation point. We present this main result below. We~need some additional notation.

Let us denote $x^*(d, d_0, d_T):[0, T]\rightarrow\mR^{n_x}$, $u^*(d, d_0, d_T):[0, T]\rightarrow\mR^{n_u}$, $\lambda^*(d, d_0, d_T):[0, T]\rightarrow\mR^{n_x}$ to be the unique optimal state, control, and adjoint solutions to Problem \eqref{eqn:parameterized-linear-quadratic-program} with the parameters $d:[0,T]\rightarrow \rr^{n_d}$ and $d_0\in\mR^{n_x}, d_T\in \rr^{n_d}$. For any $t'\in[0, T]$, we consider a single-point perturbation of $d$ at $t'$, given by
\begin{equation}\label{nequ:17}
d_p(t) = d(t) + l_{t'}\delta (t-t'), \quad\quad t\in[0, T],
\end{equation}
where $0\neq l_{t'}\in \mR^{n_d}$ is the perturbation direction and $\delta(\cdot)$ is the Dirac delta functional. With the perturbed parameters $(d_p, d_0, d_T)$, we denote the perturbed solutions of \eqref{eqn:parameterized-linear-quadratic-program} as $(x^*_p, u^*_p, \lambda^*_p)(d_p, d_0, d_T)$. The next~result investigates the perturbation of the solutions, characterized by $\delta x = x_p^* - x^*$, $\delta u = u_p^* - u^*$, and $\delta \lambda = \lambda_p^* - \lambda^*$.

\begin{theorem}\label{theorem:eds-result-for-middle} 
Consider the perturbation of $d$ at $t'$ in \eqref{nequ:17} and suppose Assumptions~\ref{assumption:matrices-are-bounded}--\ref{assumption:additional-boundedness-for-parameterized} hold. Then, there exists a constant $\Lambda = \Lambda(\sigma)>0$, independent of $T$, such that
\begin{equation*}
\|\delta x(t)\| + \|\delta u(t)\| + \|\delta \lambda(t)\| \le \|l_{t'}\|\cdot\Lambda(\sigma) e^{-\rho_Z(\sigma)\left|t-t'\right|},\quad\quad \forall t'\neq t\in [0, T],
\end{equation*} 
where $\rho_Z(\sigma)$ is defined in \eqref{nequ:18}. (The expression of $\Lambda(\sigma)$ is provided in \eqref{eqn:middle-eds}.)
\end{theorem}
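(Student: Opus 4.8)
The plan is to exploit the closed-loop representation of \Cref{theorem:state-closed-loop-equation}. The key structural observation is that the Riccati matrix $S$ in \eqref{eqn:riccati-equation} does not depend on the parameter $d$, so perturbing $d$ leaves $S$ untouched; only the backward vector $v$ of \eqref{eqn:vector-equation}, and through it the optimal state $x^*$ of \eqref{eqn:state-closed-loop-system}, are affected, and the perturbed control and adjoint then follow from $\delta\lambda = S\,\delta x + \delta v$ and, away from $t'$, from $\delta u = -R^{-1}(H\,\delta x + B^\top\delta\lambda)$ via \eqref{eqn:sensitivity-control-expression}. First I would subtract the perturbed and unperturbed systems to get the linear variational equations: $\dot{\delta v}(t) = -Z^\top(t)\,\delta v(t) + M^\top(t)\,l_{t'}\delta(t-t')$ with $\delta v(T)=0$, and $\dot{\delta x}(t) = Z(t)\,\delta x(t) - B(t)R^{-1}(t)B^\top(t)\,\delta v(t) + N(t)\,l_{t'}\delta(t-t')$ with $\delta x(0)=0$, where $Z$ is as in \eqref{eqn:Zdefinition} and $M(t) := W(t)R^{-1}(t)\{B^\top(t)S(t)+H(t)\} - (G(t)+C^\top(t)S(t))$, $N(t) := C(t) - B(t)R^{-1}(t)W^\top(t)$ are the coefficient matrices appearing in \eqref{eqn:vector-equation} and \eqref{eqn:state-closed-loop-system}. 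Both $\|M\|_\infty$ and $\|N\|_\infty$ are finite and $T$-independent, being controlled by the constants of \Cref{assumption:matrices-are-bounded,assumption:matrices-are-bounded-below,assumption:additional-boundedness-for-parameterized} together with the uniform Riccati bound $\|S\|_\infty \le c_1(\sigma)$ from \Cref{proposition:mainprop}.

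Next I would solve these equations by variation of constants. Since $v$ is a backward variable, $\delta v(t) = -\Phi_{-Z^\top}(t,t')\,M^\top(t')\,l_{t'}$ for $t \le t'$ and $\delta v(t)=0$ for $t>t'$; using the identity $\Phi_{-Z^\top}(t,t') = \Phi_Z(t',t)^\top$ and the exponential stability bound \eqref{eqn:operator-exponential-bound} of \Cref{proposition:feedback-stable} this already gives $\|\delta v(t)\| \le c_Z(\sigma)\,\|M\|_\infty\,\|l_{t'}\|\, e^{-\rho_Z(\sigma)|t-t'|}$. For $\delta x$, the Dirac forcing contributes an impulse term $\Phi_Z(t,t')\,N(t')\,l_{t'}$ felt only for $t>t'$ (bounded immediately by \eqref{eqn:operator-exponential-bound}), while the coupling to $\delta v$ contributes $-\int_0^t \Phi_Z(t,s)B(s)R^{-1}(s)B^\top(s)\,\delta v(s)\,ds = \int_0^{\min\{t,t'\}}\Phi_Z(t,s)B(s)R^{-1}(s)B^\top(s)\,\Phi_Z(t',s)^\top M^\top(t')\,l_{t'}\,ds$. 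This coupling integral is the crux.

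To handle it I would bound $\|\Phi_Z(t,s)\|\,\|\Phi_Z(t',s)\| \le c_Z^2(\sigma)\, e^{-\rho_Z(\sigma)(t-s)}e^{-\rho_Z(\sigma)(t'-s)} = c_Z^2(\sigma)\, e^{-\rho_Z(\sigma)(t+t')}e^{2\rho_Z(\sigma)s}$ (legitimate because $s \le \min\{t,t'\}$, so both evolution operators run forward), and then $\int_0^{\min\{t,t'\}} e^{2\rho_Z(\sigma)s}\,ds \le \frac{1}{2\rho_Z(\sigma)}e^{2\rho_Z(\sigma)\min\{t,t'\}}$, so that $e^{-\rho_Z(\sigma)(t+t')}e^{2\rho_Z(\sigma)\min\{t,t'\}} = e^{-\rho_Z(\sigma)|t-t'|}$ — the decays emanating from $t$ and from $t'$ "meet in the middle", and the double exponential collapses to a single one at the \emph{same} rate $\rho_Z(\sigma)$. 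Combining the impulse and coupling contributions yields $\|\delta x(t)\| \le \bigl(c_Z(\sigma)\|N\|_\infty + \tfrac{c_Z^2(\sigma)\lambda_B^2\|M\|_\infty}{2\rho_Z(\sigma)\gamma_R}\bigr)\|l_{t'}\|\, e^{-\rho_Z(\sigma)|t-t'|}$. Then I would push these bounds through $\delta\lambda = S\,\delta x + \delta v$ (using $\|S\|_\infty\le c_1(\sigma)$) and, for $t\ne t'$ where the Dirac term in $\delta u$ vanishes, through $\|\delta u(t)\| \le \gamma_R^{-1}\bigl(\lambda_H\|\delta x(t)\| + \lambda_B\|\delta\lambda(t)\|\bigr)$; summing the three bounds gives the claim with $\Lambda(\sigma)$ the resulting explicit ($T$-independent) constant.

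The main obstacle is twofold. The first is making the Dirac-perturbation argument clean: interpreting the impulsive forcing as a jump in the mild/variation-of-constants solution, and keeping the case split $t<t'$ versus $t>t'$ consistent between the \emph{backward} variable $v$ (whose perturbation lives on $[0,t']$) and the \emph{forward} variable $x$ (whose impulse is felt on $[t',T]$). The second, and the real heart of the estimate, is the coupling integral above — one must verify that convolving two exponentially decaying evolution operators over the overlap $[0,\min\{t,t'\}]$ does not degrade the exponential rate. Everything else is bookkeeping with the uniform constants already furnished by \Cref{proposition:mainprop} and the boundedness assumptions.
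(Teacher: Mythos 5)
Your proposal is correct and follows essentially the same route as the paper's proof: variational equations for $\delta v$ and $\delta x$ with Dirac forcing, the adjoint identity $\Phi_{-Z^\top}(t,t')=\Phi_Z^\top(t',t)$ (which the paper derives via the explicit time-reversal $r=T-t$ in \eqref{nequ:22}), the same collapse of the coupling integral $\int_0^{\min\{t,t'\}}e^{-\rho_Z(t-s)}e^{-\rho_Z(t'-s)}\,ds\le e^{-\rho_Z|t-t'|}/(2\rho_Z)$, and the same propagation through $\delta\lambda=S\,\delta x+\delta v$ and \eqref{eqn:sensitivity-control-expression}. Your matrices $M$ and $N$ are exactly the paper's $Y(t)$ and $C(t)-B(t)R^{-1}(t)W^\top(t)$, and the resulting constant matches \eqref{eqn:middle-eds}.
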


\begin{proof}
We model the perturbation as a Dirac delta functional $\delta_{t'}(t)\coloneqq \delta(t-t')$, which, once taken an~inner product with a function, results in a point~evaluation at $t'\in [0,T]$ in the following sense: for any $[t_a,t_b]\subseteq [0,T]$,
\begin{equation}  \label{eqn:integrate-with-dirac-delta}
\int_{t_a}^{t_b}x(t)\delta_{t'}(t)dt = \begin{cases}
x(t'), &\quad \text{if } t'\in [t_a,t_b],\\
0, &\quad \text{if } t'\notin [t_a,t_b].
\end{cases}  
\end{equation} 
We refer the reader to classic texts in Sobolev spaces and distribution theory for a detailed discussion~of~the function-space implications of modeling with the Dirac delta \cite{IvarStakgold2011Green}. In particular, introducing a point perturbation at $t=t'$ induces a jump discontinuity in the optimal trajectories. By the arguments made in~\cite[Section 2]{Barron1986Pontryagin}, Theorems \ref{theorem:pontryagin} and \ref{theorem:hjbequation} still hold uniquely but are to be interpreted in the almost everywhere sense.

Let us define $v_p:[0,T]\rightarrow \rr^{n_x}$ to be the vector equation~\eqref{eqn:vector-equation} associated with perturbed parameters $d_p$.~In particular, we have
\begin{subequations}
\begin{align*} 
\dot{v}_p(t) &= -\cbr{A(t) - B(t)R^{-1}(t)H(t) - B(t)R^{-1}(t)B^\top(t)S(t)}^\top v_p(t) \nonumber\\
& \quad + \cbr{W(t)R^{-1}(t)\sbr{B^\top(t)S(t) + H(t)} - (G(t) + C^\top(t)S(t))}^\top d_p(t) \\
& \stackrel{\mathclap{\eqref{eqn:Zdefinition}}}{=} -Z^\top(t)v_p(t) + Y^\top(t)d_p(t), \quad t\in[0, T),\\
v_p(T) & = G_T^\top d_T,
\end{align*}
\end{subequations}
where in the second equality we also define
\begin{equation*}
Y(t):=W(t)R^{-1}(t)\sbr{B^\top(t)S(t) + H(t)} - (G(t) + C^\top(t)S(t)), \quad\quad t\in [0,T].
\end{equation*}
Let $v$ denote the original solution that satisfies the unperturbed equation \eqref{eqn:vector-equation}, and define $\delta v:= v_p-v$. Then, the equation of $\delta v$ is obtained by taking the difference between the above display and \eqref{eqn:vector-equation}, and is given by
\begin{equation*}
\delta\dot{v}(t)  = -Z^\top(t) \delta v(t) + Y^\top(t) l_{t'}\delta_{t'}(t), \quad t\in[0, T), \quad\quad\quad \delta v(T)  = 0.
\end{equation*}
For consistency of using the solution evolution operator as in \eqref{eqn:mild-solution}, we transfer the above backward equation to forward equation by performing the change of variable $r := T-t,\; t\in [0,T]$, and defining $\delta\tilde{v}(r):=-\delta v(T-r)$. Thus, we have $\delta\dot{\tilde v}(r)=\delta \dot{v}(T-r)$. Within the $r$-coordinate system and defining $\tilde{Z}(r):=Z(T-r)$, we obtain
\begin{align*}
\delta\dot{\tilde{v}}(r) & = \delta \dot{v}(T-r) = - Z^\top(T-r) \delta v(T-r) + Y^\top(T-r) l_{t'}\delta_{t'}(T-r) = \tilde{Z}^\top(r)\delta\tilde{v}(r) + Y^\top(T-r) l_{t'}\delta(T-r-t')\\
& = \tilde{Z}^\top(r)\delta\tilde{v}(r) + Y^\top(T-r) l_{t'}\delta(r - (T-t')) = \tilde{Z}^\top(r)\delta\tilde{v}(r) + Y^\top(T-r) l_{t'}\delta_{T-t'}(r), \quad\quad r\in(0, T],\\
\delta \tilde{v}(0) &= 0.
\end{align*}
Applying the mild solution in \eqref{eqn:mild-solution}, we have
\begin{align*}
\delta{\tilde{v}}(r) = \int_{0}^r\Phi_{\tilde{Z}^\top}(r, s)Y^\top(T-s) l_{t'}\delta_{T-t'}(s) ds \stackrel{\eqref{eqn:integrate-with-dirac-delta}}{=}\begin{cases}
0, \quad & r< T-t',\\
\Phi_{\tilde{Z}^\top}(r, T-t')Y^\top(t') l_{t'}, \quad &r\geq T-t'.
\end{cases}
\end{align*}
The above implies that
\begin{equation}\label{nequ:19}
\delta{{v}}(t) = -\delta\tilde{v}(T-t) = 
\begin{cases}
-\Phi_{\tilde{Z}^{\top}}(T-t,T-t')Y^\top(t') l_{t'}, \quad &t\le t'\\
0, \quad & t>t'.
\end{cases}
\end{equation}
We now relate $\Phi_{\tilde{Z}^\top}$ to $\Phi_Z$. For any fixed $r\in[0, T]$ and any $s\in[r, T]$, we know from \cite[Theorem 5.2(iv)]{Pazy1983Semigroups} that $\Phi_{\tilde{Z}^\top}(s, r)$ as a function of $s$ satisfies 
\begin{equation*}
\frac{\partial \Phi_{\tilde{Z}^{\top}}(s,r)}{\partial s} = \tilde{Z}^\top(s)\Phi_{\tilde{Z}^{\top}}(s,r) = Z^\top(T-s)\Phi_{\tilde{Z}^{\top}}(s,r),\quad s\in(r, T],\quad\quad\quad \Phi_{\tilde{Z}^{\top}}(r,r) = I.
\end{equation*}
On the other hand, the operator $\Phi_{Z}^{\top}(T-r,T-s)$ as a function of $s$ satisfies
\begin{equation*}
\frac{\partial \Phi_{Z}^{\top}(T-r,T-s)}{\partial s} = \left(\Phi_{Z}(T-r,T-s)Z(T-s)\right)^{\top} = Z^{\top}(T-s)\Phi^{\top}_{Z}(T-r,T-s), \; s\in(r, T],\;\; \Phi_{Z}(T-r,T-r)=I.
\end{equation*}
By the uniqueness of the solution \cite[Theorem 5.1]{Pazy1983Semigroups}, we finally obtain
\begin{equation}\label{nequ:22}
\Phi_{\tilde{Z}^{\top}}(s,r)= \Phi_Z^{\top}(T-r,T-s), \quad\quad 0\le r\le s\le T.
\end{equation}

Plugging the above display into \eqref{nequ:19}, and defining $\b1_{t\leq t'} = 1$ if $t\leq t'$ and 0 otherwise, we have
\begin{align}\label{nequ:20}
\|\delta v(t)\| & \leq \|\Phi_{\tilde{Z}^\top}(T-t, T-t')\|\cdot \|Y(t')\|\cdot \|l_{t'}\| \b1_{t\leq t'} = \|\Phi_Z(t', t)\|\cdot\|Y(t')\|\cdot\|l_{t'}\|\b1_{t\leq t'} \nonumber\\
& \leq \|l_{t'}\|c_Z(\sigma)e^{-\rho_Z(\sigma)(t'-t)}\cbr{\frac{\lambda_W}{\gamma_R}\rbr{\lambda_B c_1(\sigma)+\lambda_H} + \lambda_C c_1(\sigma)+\lambda_G }\b1_{t\leq t'} \nonumber\\
& = \|l_{t'}\|c_Z(\sigma) \cbr{\rbr{\frac{\lambda_W\lambda_B}{\gamma_R}+ \lambda_C}c_1(\sigma)+ \frac{\lambda_W\lambda_H}{\gamma_R}+\lambda_G } e^{-\rho_Z(\sigma)(t'-t)}\b1_{t\leq t'} \nonumber\\
& \eqqcolon \|l_{t'}\| c_v(\sigma) e^{-\rho_Z(\sigma)(t'-t)}\b1_{t\leq t'},
\end{align}
where the third inequality is due to Lemmas \ref{proposition:mainprop} and \ref{proposition:feedback-stable} and Assumptions \ref{assumption:matrices-are-bounded} and \ref{assumption:additional-boundedness-for-parameterized}.

With the above result \eqref{nequ:20}, we now establish the decay of $\delta x$. By \Cref{theorem:state-closed-loop-equation}, we have for any $t'\neq t \in [0,T]$,
\begin{equation*}
x^*(t)=\Phi_Z(t,0)d_0 + \int_0^{t}\Phi_Z(t,s)g(s;d,v)ds,
\end{equation*} 
where we define
\begin{equation*}
g(t;d,v) := -B(t)R^{-1}(t)B^{\top}(t)v(t) + \cbr{C(t)-B(t)R^{-1}(t)W^{\top}(t)}d(t).
\end{equation*}
Applying \Cref{theorem:state-closed-loop-equation} to the perturbed problem and taking the difference, we have
\begin{align*}
\delta x(t) & = \int_0^t\Phi_Z(t,s)\left(g(s;d_p,v_p)-g(s;d,v)\right)ds \\
& =\int_0^t\Phi_Z(t,s)\left(-B(s)R^{-1}(s)B^{\top}(s)\delta v(s) + \cbr{C(s) - B(s)R^{-1}(s)W^{\top}(s)}l_{t'}\delta_{t'}(s)\right)ds \\
& \stackrel{\mathclap{\eqref{eqn:integrate-with-dirac-delta}}}{=} - \int_0^t\Phi_Z(t,s)B(s)R^{-1}(s)B^{\top}(s)\delta v(s)ds + \Phi_Z(t,t')\cbr{C(t')-B(t')R^{-1}(t')W^{\top}(t')}l_{t'}\cdot\mathbf{1}_{t>t'}.
\end{align*}
Thus, we apply \Cref{proposition:feedback-stable} and obtain
\begin{align}\label{eqn:eds-for-optimal-state}
& \|\delta x(t)\| \leq \frac{\lambda_B^2}{\gamma_R}\int_{0}^{t}\|\Phi_Z(t,s)\|\|\delta v(s)\| ds + \rbr{\lambda_C + \frac{\lambda_W\lambda_B}{\gamma_R}}\|\Phi_Z(t,t')\| \|l_{t'}\| \mathbf{1}_{t>t'} \nonumber\\
& \stackrel{\mathclap{\eqref{nequ:20}}}{\leq} \|l_{t'}\|c_v(\sigma)\frac{\lambda_B^2}{\gamma_R}\int_{0}^{\min\{t, t'\}} \|\Phi_Z(t, s)\| e^{-\rho_Z(\sigma)(t'-s)} ds + \|l_{t'}\|\rbr{\lambda_C + \frac{\lambda_W\lambda_B}{\gamma_R}}\|\Phi_Z(t,t')\| \mathbf{1}_{t>t'} \nonumber\\
& \leq \|l_{t'}\|c_v(\sigma)c_Z(\sigma)\frac{\lambda_B^2}{\gamma_R}\int_{0}^{\min\{t, t'\}}e^{-\rho_Z(\sigma)(t-s)}e^{-\rho_Z(\sigma)(t'-s)} ds + \|l_{t'}\|c_Z(\sigma)\rbr{\lambda_C + \frac{\lambda_W\lambda_B}{\gamma_R}}e^{-\rho_Z(\sigma)(t-t')}\mathbf{1}_{t>t'} \nonumber\\
& = \|l_{t'}\|c_v(\sigma)c_Z(\sigma)\frac{\lambda_B^2}{\gamma_R} \cdot e^{-\rho_Z(\sigma)(t+t')}\frac{e^{2\rho_Z(\sigma)\min\{t,t'\}}-1}{2\rho_Z(\sigma)} + \|l_{t'}\|c_Z(\sigma)\rbr{\lambda_C + \frac{\lambda_W\lambda_B}{\gamma_R}}e^{-\rho_Z(\sigma)(t-t')}\mathbf{1}_{t>t'} \nonumber\\
& \leq \|l_{t'}\|\frac{c_v(\sigma)c_Z(\sigma)\lambda_B^2}{2\rho_Z(\sigma)\gamma_R}e^{-\rho_Z(\sigma)|t-t'|} + \|l_{t'}\|c_Z(\sigma)\rbr{\lambda_C + \frac{\lambda_W\lambda_B}{\gamma_R}}e^{-\rho_Z(\sigma)(t-t')}\mathbf{1}_{t>t'} \nonumber\\
& \leq \|l_{t'}\|c_Z(\sigma) \cbr{\frac{c_v(\sigma)\lambda_B^2}{2\rho_Z(\sigma)\gamma_R} + \frac{\lambda_W\lambda_B}{\gamma_R} + \lambda_C}e^{-\rho_Z(\sigma)|t-t'|} \eqqcolon \|l_{t'}\| \Lambda_x(\sigma)e^{-\rho_Z(\sigma)|t-t'|},
\end{align}
where $c_v(\sigma)$ is defined in \eqref{nequ:20}. Next, we study the adjoint trajectory. By \Cref{theorem:state-closed-loop-equation}, we have for any~$t'\neq t\in[0, T]$,
\begin{align}\label{eqn:eds-of-lambda}
\|\delta\lambda(t)\| &= 
\|\lambda_p^*(t)-\lambda^*(t)\| = \|S(t)(x_p^*(t)-x^*(t))+(v_p(t)-v(t))\| \nonumber\\
& \leq c_1(\sigma)\|\delta x(t)\| + \|\delta v(t)\| \leq \|l_{t'}\|c_1(\sigma)\Lambda_{x}(\sigma)e^{-\rho_Z(\sigma)|t-t'|} + \|l_{t'}\| c_v(\sigma) e^{-\rho_Z(\sigma)(t'-t)}\b1_{t\leq t'} \nonumber\\
& \leq \|l_{t'}\|\cbr{c_1(\sigma)\Lambda_{x}(\sigma) + c_v(\sigma)}e^{-\rho_Z(\sigma)|t-t'|} \eqqcolon \|l_{t'}\| \Lambda_\lambda(\sigma)e^{-\rho_Z(\sigma)|t-t'|},
\end{align}
where the fourth inequality is due to \eqref{nequ:20} and \eqref{eqn:eds-for-optimal-state}. Finally, we consider the control trajectory. By \eqref{eqn:sensitivity-control-expression}~in~\Cref{theorem:state-closed-loop-equation}, we know for $t'\neq t \in [0,T]$ that
\begin{align}\label{eqn:eds-of-u}
& \|\delta u(t)\|  \coloneqq \|u_p^*(t)-u^*(t)\| \nonumber\\
& = \|-R^{-1}(t)\{H(t)x_p^*(t)+B^{\top}(t)\lambda_p^*(t) + W^{\top}(t)d_p(t)\} +R^{-1}(t)\{H(t)x^*(t)+B^{\top}(t)\lambda^*(t) + W^{\top}(t)d(t)\}\| \nonumber\\
& = \|-R^{-1}(t)\{H(t)\delta x(t) + B^{\top}(t)\delta\lambda(t) + W^{\top}(t)l_{t'}\delta_{t'}(t)\}\|  = \|-R^{-1}(t)\{H(t)\delta x(t) + B^{\top}(t)\delta\lambda(t)\}\|  \nonumber\\
&\leq \frac{\lambda_H}{\gamma_R}\|\delta x(t)\| + \frac{\lambda_B}{\gamma_R}\|\delta\lambda(t)\|  \stackrel{\eqref{eqn:eds-for-optimal-state},\eqref{eqn:eds-of-lambda}}{\le}  \|l_{t'}\| \cdot \frac{\Lambda_x(\sigma)\lambda_H + \Lambda_\lambda(\sigma)\lambda_B}{\gamma_R}e^{-\rho_Z(\sigma)\left|t-t'\right|} \nonumber\\
& \eqqcolon \|l_{t'}\|\Lambda_u(\sigma)e^{-\rho_Z(\sigma)\left|t-t'\right|}.
\end{align}
Combining \eqref{eqn:eds-for-optimal-state}, \eqref{eqn:eds-of-lambda}, \eqref{eqn:eds-of-u}, we complete the proof by defining
\begin{align}\label{eqn:middle-eds}
\Lambda & = \Lambda(\sigma) \coloneqq \Lambda_x(\sigma) + \Lambda_u(\sigma) + \Lambda_\lambda(\sigma) \nonumber\\
& = \rbr{1+\frac{\lambda_H}{\gamma_R}}\Lambda_x(\sigma) + \rbr{1+\frac{\lambda_B}{\gamma_R}}\Lambda_\lambda(\sigma)  = \rbr{1+\frac{\lambda_H}{\gamma_R}}\Lambda_x(\sigma) + \rbr{1+\frac{\lambda_B}{\gamma_R}}(c_1(\sigma)\Lambda_{x}(\sigma) + c_v(\sigma)) \nonumber\\
& = \cbr{1+\frac{\lambda_H}{\gamma_R} + c_1(\sigma)\rbr{1+\frac{\lambda_B}{\gamma_R}}}\Lambda_x(\sigma)+ c_v(\sigma)\rbr{1+\frac{\lambda_B}{\gamma_R}},
\end{align}
where $c_1(\sigma)$ is in \Cref{proposition:mainprop} (cf. \eqref{eqn:final-riccati-bounds}), $\Lambda_x(\sigma)$ is in \eqref{eqn:eds-for-optimal-state}, and $c_v(\sigma)$ is in \eqref{nequ:20}.
\end{proof}

\Cref{theorem:eds-result-for-middle} provides the EDS result with respect to the general parameters $d$ that Problem~\eqref{eqn:parameterized-linear-quadratic-program} has linear dependence on in the integral term in the cost functional. To complete the section, we analyze the effects of perturbing the boundary parameterizations $d_0$ and $d_T$.
Following the setup as in \Cref{theorem:eds-result-for-middle}, we consider the boundary perturbations of Problem \eqref{eqn:parameterized-linear-quadratic-program}:
\begin{equation}\label{nequ:21}
d_{p,0} = d_0+l_0,\quad\quad\quad d_{p,T} = d_T+l_T,
\end{equation}
where $l_0\in\mR^{n_x}, l_T\in \mR^{n_d}$ are perturbation directions. We also use $(x^*_p, u^*_p, \lambda^*_p)(d, d_{p,0}, d_{p,T})$ to denote the perturbed solution and investigate the solution perturbations measured by $\delta x = x_p^* - x^*$, $\delta u = u_p^* - u^*$,~and~$\delta \lambda = \lambda_p^* - \lambda^*$.

\begin{theorem}\label{theorem:eds-for-boundary}
Consider the perturbation of $d_0, d_T$ in \eqref{nequ:21} and suppose Assumptions \ref{assumption:matrices-are-bounded}--\ref{assumption:additional-boundedness-for-parameterized} hold. Then, there exists a constant $\Lambda_b = \Lambda_b(\sigma)>0$, independent of $T$, such that
\begin{equation*}
\|\delta x(t)\| + \|\delta u(t)\| + \|\delta \lambda(t)\| \le \Lambda_b(\sigma) \left(\|l_0\|e^{-\rho_Z(\sigma)t} + \|l_T\|e^{-\rho_Z(\sigma)(T-t)}\right),\quad\quad t\in[0, T],
\end{equation*}
where $\rho_Z(\sigma)$ is defined in \eqref{nequ:18}. (The expression of $\Lambda_b(\sigma)$ is provided in \eqref{eqn:boundary-eds}.)
\end{theorem}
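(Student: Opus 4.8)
The plan is to mirror the structure of the proof of \Cref{theorem:eds-result-for-middle}, using the closed-loop representation in \Cref{theorem:state-closed-loop-equation}. The key starting observation is that the Riccati matrix $S$ depends only on $(A,B,Q,H,R,Q_T)$, not on $(d,d_0,d_T)$, so $S$ is unaffected by the boundary perturbation \eqref{nequ:21}. Likewise, the vector equation \eqref{eqn:vector-equation} for $v$ involves the boundary data only through its terminal condition $v(T)=G_T^\top d_T$; since $d$ is held fixed, the difference $\delta v := v_p - v$ satisfies the \emph{homogeneous} backward equation $\delta\dot v(t) = -Z^\top(t)\delta v(t)$ with $\delta v(T) = G_T^\top l_T$. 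Solving this by the time-reversal change of variables $r = T-t$ exactly as in \eqref{nequ:19}--\eqref{nequ:22}, using the operator identity $\Phi_{-Z^\top}(t,T) = \Phi_Z^\top(T,t)$ together with the exponential bound \eqref{eqn:operator-exponential-bound} of \Cref{proposition:feedback-stable} and $\|G_T\|\le\lambda_G$ from \Cref{assumption:additional-boundedness-for-parameterized}, I obtain $\|\delta v(t)\| \le \lambda_G\, c_Z(\sigma)\, e^{-\rho_Z(\sigma)(T-t)}\|l_T\|$; that is, $\delta v$ decays exponentially away from the terminal time and depends only on $l_T$.

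Next I would treat $\delta x$. Applying \Cref{theorem:state-closed-loop-equation} to both the nominal and perturbed problems and subtracting, the state difference obeys the forward equation $\delta\dot x(t) = Z(t)\delta x(t) - B(t)R^{-1}(t)B^\top(t)\delta v(t)$ with $\delta x(0) = l_0$ (the driving term in $d$ cancels since $d$ is unchanged). Writing the mild solution $\delta x(t) = \Phi_Z(t,0)l_0 - \int_0^t \Phi_Z(t,s)B(s)R^{-1}(s)B^\top(s)\delta v(s)\,ds$ and bounding via \Cref{proposition:feedback-stable}, Assumptions \ref{assumption:matrices-are-bounded} and \ref{assumption:matrices-are-bounded-below}, and the bound on $\delta v$, the convolution integral $\int_0^t e^{-\rho_Z(\sigma)(t-s)}e^{-\rho_Z(\sigma)(T-s)}\,ds \le e^{-\rho_Z(\sigma)(T-t)}/(2\rho_Z(\sigma))$ collapses everything into the desired form $\|\delta x(t)\| \le \Lambda_x^b(\sigma)\bigl(\|l_0\|e^{-\rho_Z(\sigma)t} + \|l_T\|e^{-\rho_Z(\sigma)(T-t)}\bigr)$, with $\Lambda_x^b(\sigma) = \max\{c_Z(\sigma),\, \lambda_B^2\lambda_G c_Z(\sigma)^2/(2\rho_Z(\sigma)\gamma_R)\}$.

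The adjoint and control perturbations then follow algebraically. From $\lambda^*(t) = S(t)x^*(t)+v(t)$ we get $\delta\lambda(t) = S(t)\delta x(t) + \delta v(t)$, and $\|S(t)\|\le c_1(\sigma)$ from \Cref{proposition:mainprop} combined with the two prior estimates gives $\|\delta\lambda(t)\|\le \Lambda_\lambda^b(\sigma)\bigl(\|l_0\|e^{-\rho_Z(\sigma)t}+\|l_T\|e^{-\rho_Z(\sigma)(T-t)}\bigr)$. From \eqref{eqn:sensitivity-control-expression}, since $d$ is unchanged, $\delta u(t) = -R^{-1}(t)\{H(t)\delta x(t) + B^\top(t)\delta\lambda(t)\}$, and using $R^{-1}(t)\preceq \gamma_R^{-1}I$ and $\|H\|_\infty\le\lambda_H$, $\|B\|_\infty\le\lambda_B$ yields $\|\delta u(t)\| \le \Lambda_u^b(\sigma)\bigl(\|l_0\|e^{-\rho_Z(\sigma)t}+\|l_T\|e^{-\rho_Z(\sigma)(T-t)}\bigr)$ with $\Lambda_u^b(\sigma) = (\lambda_H\Lambda_x^b(\sigma)+\lambda_B\Lambda_\lambda^b(\sigma))/\gamma_R$. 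Summing the three bounds and setting $\Lambda_b(\sigma) := \Lambda_x^b(\sigma) + \Lambda_u^b(\sigma) + \Lambda_\lambda^b(\sigma)$ gives \eqref{eqn:boundary-eds} and completes the proof.

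I do not anticipate a fundamental obstacle: the only delicate point is the correct handling of the backward evolution operator governing $\delta v$ — in particular verifying that its natural decay is measured from $t=T$ rather than $t=0$ — but this is settled by the same time-reversal argument and operator identity already established in the proof of \Cref{theorem:eds-result-for-middle}. The remaining work is careful bookkeeping of constants and checking that the mixed cross-term in the convolution indeed reduces to the single factor $e^{-\rho_Z(\sigma)(T-t)}$ after dropping the (nonpositive-sign) term $-e^{-\rho_Z(\sigma)(T+t)}$.
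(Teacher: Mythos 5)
Your proposal is correct and follows essentially the same route as the paper's proof: the homogeneous backward equation for $\delta v$ solved by time reversal, the mild-solution convolution bound for $\delta x$ with the same constant $\Lambda_{x,b}(\sigma)=\max\{c_Z(\sigma),\,c_Z^2(\sigma)\lambda_B^2\lambda_G/(2\rho_Z(\sigma)\gamma_R)\}$, and the algebraic propagation to $\delta\lambda$ and $\delta u$. The constants you obtain match \eqref{eqn:boundary-eds} up to the paper's slight regrouping of $\Lambda_{\lambda,b}(\sigma)$, so no changes are needed.
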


\begin{proof}
The proof follows from carrying out a similar analysis to that of~\Cref{theorem:eds-result-for-middle}, where we consider~the mild solution of the optimal state (see~\eqref{eqn:mild-solution}) before and after introducing the perturbations $l_0, l_T$.
Define~$v_p = v+\delta v$ as the perturbed solution of~\eqref{eqn:vector-equation} with terminal condition $v_p(T) = G_T^\top(d_T+l_T)$. Then, $\delta v = v_p - v$~satisfies
\begin{equation*}
\delta\dot{v}(t) = -Z(t)^\top \delta v(t), \quad t\in[0, T),\quad\quad \delta v(T) = G_T^{\top}l_T,
\end{equation*}
where $Z$ is defined in \eqref{eqn:Zdefinition}. By using the reparameterization $r:=T-t$ and defining $\delta\tilde{v}(r):=-\delta v(T-r)$ and $\tilde{Z}(r) \coloneqq Z(T-r)$, we obtain the forward in time dynamics
\begin{equation*}
\delta\dot{\tilde{v}}(r) = \tilde{Z}^{\top}(r)\delta\tilde{v}(r), \quad r\in (0, T],\quad\quad \delta \tilde{v}(0)  = -G_T^{\top}l_T.
\end{equation*}
Applying \eqref{eqn:mild-solution}, we have for $t\in [0,T]$ that
\begin{equation}\label{eqn:eds2-for-vector-perturb}
\|\delta v(t)\| = \|\delta\tilde{v}(T-t)\| = \|\Phi_{\tilde{Z}^\top}(T-t, 0)G_T^{\top}l_T\| \stackrel{\eqref{nequ:22}}{\leq} \lambda_G\|l_T\|\|\Phi_Z(T, t)\| \leq \|l_T\| \lambda_G c_Z(\sigma)e^{-\rho_Z(\sigma)(T-t)},
\end{equation}	
where the last inequality is due to \Cref{proposition:feedback-stable}. Furthermore, by \eqref{eqn:state-closed-loop-system}, we know
\begin{equation*}
\dot{x}_p^*(t) = Z(t)x_p^*(t) - B(t)R^{-1}(t)B^{\top}(t)v_p(t) + \cbr{C(t)-B(t)R^{-1}(t)W^{\top}(t)}d(t), \;\; t\in (0,T],\quad\; x_p^*(0) = d_0+l_0,
\end{equation*}
which leads to
\begin{equation*}
\delta \dot{x}(t) = Z(t)\delta x(t) - B(t)R^{-1}(t)B^{\top}(t)\delta v(t), \;\;\; t\in (0,T], \quad\quad \delta x(0) = l_0.
\end{equation*}
Thus, we obtain from \eqref{eqn:mild-solution} that for any $t\in [0,T]$,
\begin{align}\label{nequ:23}
\|\delta x(t)\| &= \left\|\Phi_Z(t,0)l_0 - \int_0^t\Phi_Z(t,s)B(s)R^{-1}(s)B^{\top}(s)\delta v(s)ds\right\| \nonumber\\
&\leq \|l_0\|\cdot c_Z(\sigma)e^{-\rho_Z(\sigma)t} + \frac{\lambda_B^2}{\gamma_R}\int_0^t\|\Phi_Z(t,s)\|\|\delta v(s)\|ds \nonumber\\
& \stackrel{\mathclap{\eqref{eqn:eds2-for-vector-perturb}}}{\le} \|l_0\|\cdot c_Z(\sigma)e^{-\rho_Z(\sigma)t} + \|l_T\|\cdot c_Z^2(\sigma)\frac{\lambda_B^2\lambda_G}{\gamma_R}\int_0^te^{-\rho_Z(\sigma)(t-s)}{e^{-\rho_Z(\sigma)(T-s)}}ds \nonumber\\
& = \|l_0\|\cdot c_Z(\sigma)e^{-\rho_Z(\sigma)t} + \|l_T\|\cdot \frac{c_Z^2(\sigma)}{2\rho_Z(\sigma)}\frac{\lambda_B^2\lambda_G}{\gamma_R}\left(e^{-\rho_Z(\sigma)(T-t)} - e^{-\rho_Z(\sigma)(T+t)}\right) \nonumber\\
& \leq \|l_0\|\cdot c_Z(\sigma)e^{-\rho_Z(\sigma)t} + \|l_T\|\cdot \frac{c_Z^2(\sigma)}{2\rho_Z(\sigma)}\frac{\lambda_B^2\lambda_G}{\gamma_R}e^{-\rho_Z(\sigma)(T-t)} \nonumber\\
& \leq \Lambda_{x,b}(\sigma)\rbr{\|l_0\|e^{-\rho_Z(\sigma)t} + \|l_T\|e^{-\rho_Z(\sigma)(T-t)}},
\end{align}
where we define
\begin{equation*}
\Lambda_{x,b}(\sigma) :=  \max\left\{ c_Z(\sigma), \; \frac{c_Z^2(\sigma)\lambda_B^2\lambda_G}{2\rho_Z(\sigma)\gamma_R}\right\}.
\end{equation*}
For the adjoint trajectory, we apply \Cref{theorem:state-closed-loop-equation}, \Cref{proposition:mainprop}, and have for any $t\in [0,T]$,
\begin{align}\label{nequ:24}
\|\delta\lambda(t)\| & = \|\lambda_p^*(t)-\lambda^*(t)\| = \|S(t)(x_p^*(t)-x^*(t))+(v_p(t)-v(t))\| \le \|S(t)\|\|\delta x(t)\| + \|\delta v(t)\| \nonumber\\
& \stackrel{\mathclap{\eqref{eqn:eds2-for-vector-perturb}, \eqref{nequ:23}}}{\leq} c_1(\sigma)\Lambda_{x,b}(\sigma)\rbr{\|l_0\|e^{-\rho_Z(\sigma)t} + \|l_T\|e^{-\rho_Z(\sigma)(T-t)}} + \|l_T\| \lambda_G c_Z(\sigma)e^{-\rho_Z(\sigma)(T-t)} \nonumber\\
& \leq \Lambda_{x,b}(\sigma)\rbr{c_1(\sigma) + \lambda_G}\rbr{\|l_0\|e^{-\rho_Z(\sigma)t} + \|l_T\|e^{-\rho_Z(\sigma)(T-t)}} \nonumber\\
& \eqqcolon \Lambda_{\lambda,b}(\sigma)\rbr{\|l_0\|e^{-\rho_Z(\sigma)t} + \|l_T\|e^{-\rho_Z(\sigma)(T-t)}}.
\end{align}
Finally, we consider the control trajectory in \eqref{eqn:sensitivity-control-expression} and have for $t\in [0,T]$,
\begin{align}\label{nequ:25}
\|\delta u(t)\|  & := \|u_p^*(t)-u^*(t)\| \stackrel{\eqref{eqn:sensitivity-control-expression}}{=} \|-R^{-1}(t)\{H(t)\delta x(t) + B^{\top}(t)\delta\lambda(t)\| \leq \frac{\lambda_H}{\gamma_R}\|\delta x(t)\| + \frac{\lambda_B}{\gamma_R}\|\delta\lambda(t)\| \nonumber \\
& \stackrel{\mathclap{\eqref{nequ:23}, \eqref{nequ:24}}}{\leq}\;\;\;\; \frac{\Lambda_{x,b}(\sigma)\lambda_H + \Lambda_{\lambda,b}(\sigma)\lambda_B}{\gamma_R}\rbr{\|l_0\|e^{-\rho_Z(\sigma)t} + \|l_T\|e^{-\rho_Z(\sigma)(T-t)}} \nonumber\\
& \eqqcolon \Lambda_{u,b}(\sigma)\rbr{\|l_0\|e^{-\rho_Z(\sigma)t} + \|l_T\|e^{-\rho_Z(\sigma)(T-t)}}.
\end{align}
Finally, we combine \eqref{nequ:23}, \eqref{nequ:24}, \eqref{nequ:25}, and complete the proof by defining $\Lambda_b(\sigma)$ as
\begin{align}\label{eqn:boundary-eds}
\Lambda_b & = \Lambda_b(\sigma) = \Lambda_{x,b}(\sigma) + \Lambda_{u,b}(\sigma) + \Lambda_{\lambda,b}(\sigma) \nonumber\\
& = \rbr{1+\frac{\lambda_H}{\gamma_R}}\Lambda_{x,b}(\sigma) + \rbr{1+\frac{\lambda_B}{\gamma_R}}\Lambda_{\lambda,b}(\sigma) =  \cbr{1+\frac{\lambda_H}{\gamma_R} + \rbr{1+\frac{\lambda_B}{\gamma_R}}(c_1(\sigma)+\lambda_G)}\Lambda_{x,b} \nonumber\\
& = \cbr{1+\frac{\lambda_H}{\gamma_R} + \rbr{1+\frac{\lambda_B}{\gamma_R}}(c_1(\sigma)+\lambda_G)}\max\left\{ c_Z(\sigma), \; \frac{c_Z^2(\sigma)\lambda_B^2\lambda_G}{2\rho_Z(\sigma)\gamma_R}\right\},
\end{align}
where $c_1(\sigma)$ is in \Cref{proposition:mainprop} (cf. \eqref{eqn:final-riccati-bounds}) and $c_Z(\sigma)$ and $\rho_Z(\sigma)$ are in \eqref{nequ:18}.
\end{proof}

Up to this point, we have established the sensitivity of the solution of the parameterized linear-quadratic problem~\eqref{eqn:parameterized-linear-quadratic-program} to the perturbations in the input mapping $d$ and two boundary vectors $d_0, d_T$. 
We recall~\Cref{proposition:fullproblemsolvedimpliessubproblemsolved} that the overlapping Schwarz scheme introduces boundary vector perturbations due to \mbox{imperfect}~boundary conditions. In the next section, we leverage \Cref{theorem:eds-for-boundary} to show that, by recursively updating the boundary conditions, the overlapping Schwarz scheme achieves linear convergence with the linear rate decaying~exponentially fast in terms of the size of the overlap.

\section{Convergence of Continuous-Time OSD}  \label{sec:convergence-analysis} 

In this section, we present the convergence analysis as a consequence of EDS established in \Cref{sec:parameterized-ocp}, which provides a rigorous justification for the use of overlapping subdomains in the proposed~\Cref{alg:mainprocedure}. Specifically, we set $d = 0$ in~\eqref{eqn:parameterized-linear-quadratic-program} and analyze the subproblems in \eqref{eqn:defineparameterizedsubproblem}, parameterized by $p_j, q_j$.
Since perturbations arising from inaccuracies in~the~\mbox{initial}~and~\mbox{terminal}~parameters of subproblems are damped exponentially as they propagate into the interior of each subinterval, we demonstrate that even when the boundary parameters $d_0 := p_j$ and $d_T := q_j$ are not initialized exactly at $p_j^*$, $q_j^*$ -- the optimal boundary values of the global~\mbox{problem}~\eqref{eqn:linear-quadratic-ocp}~(in~particular,~those~\mbox{satisfying}~the~condition~in Proposition \ref{proposition:fullproblemsolvedimpliessubproblemsolved}) -- the resulting subproblem solutions remain~close~to~the~global optimal~trajectory~on~each~non-overlapping interval $[t_{j-1}, t_j]$, provided that the overlaps between subdomains are sufficiently large.

We now present a key result that quantifies perturbations within the domain to errors introduced at the~subdomain boundaries. We recall that $(x^*, u^*, \lambda^*)$ denotes the (unique) optimal state, control, and~adjoint~trajectories of the full problem $\mathcal{P}([0,T]; x_0)$ in \eqref{eqn:linear-quadratic-ocp}.

\begin{lemma}[Stagewise improvement] \label{theorem:stepwiseimprovement}
\hskip-0.1cm Suppose Assumptions \ref{assumption:matrices-are-bounded}--\ref{assumption:matrices-are-bounded-below} hold for Problem \eqref{eqn:linear-quadratic-ocp}, and consider applying Algorithm \ref{alg:mainprocedure} to solve Problem \eqref{eqn:linear-quadratic-ocp}. In particular, $(x^{(k)}, u^{(k)}, \lambda^{(k)})$ is the $k$-th solution iterate concatenating $m$ pieces $\{(x_j^{(k)}, u_j^{(k)}, \lambda_j^{(k)})\}_{j=1}^m$, which are solutions of $\mathcal{P}_j([t_j^0, t_j^1]; p_j^{(k)}, q_j^{(k)})$ in \eqref{eqn:defineparameterizedsubproblem} with boundary~parameters $p_j^{(k)} = x^{(k-1)}(t_j^0)$ and $q_j^{(k)} = x^{(k-1)}(t_j^1) - Q^{-1}(t_j^1) \lambda^{(k-1)}(t_j^1)$. Let $\tau := \min_{1\le j\le m}\min\{\tau_j^0,\tau_j^1\}$ denote the smallest overlap size between subdomains. Then, we have for any $k\geq 1$, any $1\leq j\leq m$, and any~$t\in[t_{j-1}, t_j]$,
\begin{multline*}
\|x^{(k)}(t) - x^*(t)\| + \|u^{(k)}(t) - u^*(t)\| + \|\lambda^{(k)}(t) - \lambda^*(t)\| \\
\le c(\sigma)e^{-\rho_Z(\sigma) \tau} \left( \|x^{(k-1)}(t_j^0) - x^*(t_j^0)\| + \cbr{\|x^{(k-1)}(t_j^1) - x^*(t_j^1)\| + \|\lambda^{(k-1)}(t_j^1) - \lambda^*(t_j^1)\|}\cdot \b1_{j\neq m} \right),
\end{multline*}
where $\rho_Z(\sigma)$ is defined in \eqref{nequ:18} and $c(\sigma) \coloneqq \Lambda_b(\sigma)(1+1/\gamma_Q)$ with $\Lambda_b(\sigma)$ in \eqref{eqn:boundary-eds}. Here, for both $\rho_Z(\sigma)$~and $\Lambda_b(\sigma)$, we use $\lambda_Q$ in place of $\lambda_G$ and set $\lambda_C = \lambda_W = 0$.

\end{lemma}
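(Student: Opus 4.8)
The plan is to reduce the statement to the boundary-perturbation EDS bound of \Cref{theorem:eds-for-boundary}. The first step is to recognize each subproblem solved inside \Cref{alg:mainprocedure} as an instance of the parameterized OCP~\eqref{eqn:parameterized-linear-quadratic-program}: for $1\le j\le m$, the subproblem $\mathcal{P}_j([t_j^0,t_j^1];p_j,q_j)$ of~\eqref{eqn:defineparameterizedsubproblem} is exactly~\eqref{eqn:parameterized-linear-quadratic-program} posed on the shorter horizon $[t_j^0,t_j^1]$ with $d\equiv 0$ and $C=W=G\equiv 0$, with $d_0:=p_j$, and with terminal data chosen to reproduce $L_j$ -- for $j<m$ one takes $Q_T:=Q(t_j^1)$, $G_T:=-Q(t_j^1)$, $d_T:=q_j$, so that the coupled terminal block collapses to $\tfrac12 x^\top Q(t_j^1)x - x^\top Q(t_j^1)q_j = L_j(x;q_j)$; for $j=m$ one keeps the original $Q_T$ with $G_T:=0$, $d_T:=0$. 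I would then verify that Assumptions~\ref{assumption:matrices-are-bounded}--\ref{assumption:matrices-are-bounded-below} carry over to this instance -- the bounds on $A,B,Q,H,R$ are unchanged, the UCC property of $(A,B)$ on $[0,T]$ restricts to any subinterval, and the terminal-cost requirement $\gamma_Q I\preceq Q(t_j^1)\preceq\lambda_Q I$ follows from $Q(t_j^1)\succeq Q(t_j^1)-H^\top(t_j^1)R^{-1}(t_j^1)H(t_j^1)\succeq\gamma_Q I$ and $Q(t_j^1)\preceq\lambda_Q I$ -- while Assumption~\ref{assumption:additional-boundedness-for-parameterized} holds with $\lambda_C=\lambda_W=0$ and $\|G_T\|=\|Q(t_j^1)\|\le\lambda_Q$. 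Because the Riccati matrix $S$ and the matrix $Z$ of~\eqref{eqn:Zdefinition} depend only on $A,B,Q,H,R$, none of $c_0(\sigma),c_1(\sigma),c_Z(\sigma),\rho_Z(\sigma)$ changes, and $\Lambda_b(\sigma)$ is read off from~\eqref{eqn:boundary-eds} with $\lambda_G$ replaced by $\lambda_Q$ and $\lambda_C=\lambda_W=0$, which is precisely the convention stated in the lemma.

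Next I would fix $k\ge 1$ and $1\le j\le m$ and identify the two trajectories being compared. By \Cref{proposition:fullproblemsolvedimpliessubproblemsolved}, the restriction $(x^*,u^*,\lambda^*)|_{[t_j^0,t_j^1]}$ is the unique solution of this instance with boundary vectors $(p_j^*,q_j^*)$ from~\eqref{eqn:optimalparameterchoices}, while the $k$-th Schwarz iterate piece $(x_j^{(k)},u_j^{(k)},\lambda_j^{(k)})$ is the unique solution with $(p_j^{(k)},q_j^{(k)})$. Applying \Cref{theorem:eds-for-boundary} with perturbation directions $l_0:=p_j^{(k)}-p_j^* = x^{(k-1)}(t_j^0)-x^*(t_j^0)$ and $l_T:=q_j^{(k)}-q_j^*$ gives, for all $t\in[t_j^0,t_j^1]$,
\begin{equation*}
\|\delta x(t)\|+\|\delta u(t)\|+\|\delta\lambda(t)\|\le\Lambda_b(\sigma)\bigl(\|l_0\|e^{-\rho_Z(\sigma)(t-t_j^0)}+\|l_T\|e^{-\rho_Z(\sigma)(t_j^1-t)}\bigr),
\end{equation*}
where $\delta x=x_j^{(k)}-x^*$, $\delta u=u_j^{(k)}-u^*$, $\delta\lambda=\lambda_j^{(k)}-\lambda^*$ on that interval. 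I would then restrict $t$ to $[t_{j-1},t_j]\subseteq[t_j^0,t_j^1]$, where $t-t_j^0\ge\tau_j^0\ge\tau$ and $t_j^1-t\ge\tau_j^1\ge\tau$; in the truncated cases $t_j^0=0$ (where $x^{(k-1)}(0)=x_0=x^*(0)$ forces $l_0=0$) and $j=m$ (where $\mathcal{P}_m$ is independent of $q_m$, so $l_T=0$) the corresponding term simply disappears, so in all cases both exponentials are bounded by $e^{-\rho_Z(\sigma)\tau}$. Finally I would estimate $\|l_T\|\le\|x^{(k-1)}(t_j^1)-x^*(t_j^1)\|+\gamma_Q^{-1}\|\lambda^{(k-1)}(t_j^1)-\lambda^*(t_j^1)\|$ using $\|Q^{-1}(t_j^1)\|\le1/\gamma_Q$, absorb the factor $\max\{1,1/\gamma_Q\}\le1+1/\gamma_Q$, and invoke the aggregation step of \Cref{alg:mainprocedure}, which makes $(x^{(k)},u^{(k)},\lambda^{(k)})$ coincide with $(x_j^{(k)},u_j^{(k)},\lambda_j^{(k)})$ on $[t_{j-1},t_j]$; this yields the claimed bound with $c(\sigma)=\Lambda_b(\sigma)(1+1/\gamma_Q)$ and the factor $\mathbf{1}_{j\ne m}$ attached to the terminal-error terms.

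I expect the main obstacle, beyond this bookkeeping, to be the embedding step: checking cleanly that $\mathcal{P}_j$ sits inside~\eqref{eqn:parameterized-linear-quadratic-program} with all hypotheses intact and, crucially, that the Riccati and stability constants $c_0(\sigma),c_1(\sigma),c_Z(\sigma),\rho_Z(\sigma),\Lambda_b(\sigma)$ are inherited uniformly over $j$ and $k$ (and independently of $T$) on the shorter, possibly truncated horizons $[t_j^0,t_j^1]$ -- that is, that neither the bounds of \Cref{proposition:mainprop} nor the UCC property degrade under restriction to a subinterval. The remaining ingredients -- matching the terminal cost, reading off $l_0,l_T$ from~\eqref{eqn:optimalparameterchoices}, the distance estimates on $[t_{j-1},t_j]$, and the $\gamma_Q^{-1}$ bound -- are routine.
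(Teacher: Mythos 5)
Your proposal is correct and follows essentially the same route as the paper: identify $\mathcal{P}_j$ as an instance of the parameterized OCP~\eqref{eqn:parameterized-linear-quadratic-program} with $d\equiv 0$, $C=W=G\equiv 0$, $G_T=-Q(t_j^1)$, read off $l_0=p_j^{(k)}-p_j^*$ and $l_T=q_j^{(k)}-q_j^*$ via \Cref{proposition:fullproblemsolvedimpliessubproblemsolved}, apply \Cref{theorem:eds-for-boundary}, bound the exponentials by $e^{-\rho_Z(\sigma)\tau}$ on $[t_{j-1},t_j]$, and absorb $\|Q^{-1}(t_j^1)\|\le 1/\gamma_Q$ into $c(\sigma)$. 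Your explicit handling of the truncated cases ($l_0=0$ when $t_j^0=0$ and the vanishing terminal term when $j=m$) and your verification that the assumptions and constants are inherited on the subintervals are details the paper's proof leaves implicit, but they match its intent exactly.
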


\begin{proof}
By~\Cref{proposition:fullproblemsolvedimpliessubproblemsolved}, the solution $(x^*,u^*,\lambda^*)$ of $\mathcal{P}([0,T];x_0)$ restricted to $[t_{j-1}, t_j]$ is equivalent to the solution of $\mathcal{P}_j([t_j^0,t_j^1]; p_j^*,q_j^*)$ (see \eqref{eqn:optimalparameterchoices} for definitions of $p_j^*,q_j^*$) with the overlaps discarded. We quantify how boundary errors propagate into the interior of each subdomain. For any $k\geq 1$ and $1\leq j\leq m$, we define two perturbation vectors on the two ends (only initial perturbation vector when~$j=m$):
\begin{align*}
l_{j,0}^{(k)} & := p_j^{(k)} - p_j^* \stackrel{\text{Alg.}~\ref{alg:mainprocedure}}{=} x^{(k-1)}(t_j^0)-x^*(t_j^0),\\
l_{j,T}^{(k)} & := q_j^{(k)} - q_j^* \stackrel{\text{Alg.}~\ref{alg:mainprocedure}}{=} (x^{(k-1)}(t_j^1)-x^*(t_j^1))-Q^{-1}(t_j^1)(\lambda^{(k-1)}(t_j^1)-\lambda^*(t_j^1)).
\end{align*}
Note that $\mathcal{P}_j([t_j^0,t_j^1]; p_j, q_j)$ is in the form of \eqref{eqn:parameterized-linear-quadratic-program} with $d(t)=0$, $C(t)=0$, $G(t)=0$, $W(t)=0$, $G_T = -Q(t_j^1)$, $d_0 = p_j$, $d_T = q_j$. Therefore,~by \Cref{theorem:eds-for-boundary} and \Cref{proposition:fullproblemsolvedimpliessubproblemsolved}, for any $t\in[t_{j-1}, t_j]$, we have
\begin{align*}
& \|x^{(k)}(t)-x^*(t)\|  + \|u^{(k)}(t)-u^*(t)\| + \|\lambda^{(k)}(t)-\lambda^*(t)\| \\
& = \|x_j^{(k)}(t)-x^*(t)\| + \|u_j^{(k)}(t)-u^*(t)\| + \|\lambda_j^{(k)}(t)-\lambda^*(t)\|\\
& \le \Lambda_b(\sigma) \left(\|l_{j,0}^{(k)}\|e^{-\rho_Z(\sigma)(t-t_j^0)} + \|l_{j,T}^{(k)}\|e^{-\rho_Z(\sigma)(t_j^1-t)}\b1_{j\neq m}\right) \\
&\le \Lambda_b(\sigma)e^{-\rho_Z(\sigma)\tau}\left(\|l_{j,0}^{(k)}\| + \|l_{j,T}^{(k)}\|\b1_{j\neq m}\right) \\
&\le \Lambda_b(\sigma)(1+1/\gamma_Q)e^{-\rho_Z(\sigma)\tau}\rbr{
\|x^{(k-1)}(t_j^0)-x^*(t_j^0)\| + \cbr{\|x^{(k-1)}(t_{j}^1)-x^*(t_{j}^1)\| + \|\lambda^{(k-1)}(t_j^1)-\lambda^*(t_j^1)\|}\b1_{j\neq m}},
\end{align*}
where the third inequality is due to  $t-t_j^0\geq \tau$ and $t_j^1-t\geq \tau$ when $t\in [t_{j-1}, t_j]$. We complete the proof.
\end{proof}

We are now ready to present our main convergence result of the overlapping Schwarz scheme.

\begin{theorem}[Linear convergence of overlapping Schwarz] \label{proposition:linear-convergence-in-l-infty}

Suppose Assumptions \ref{assumption:matrices-are-bounded}--\ref{assumption:matrices-are-bounded-below} hold for Problem \eqref{eqn:linear-quadratic-ocp}, and consider applying Algorithm \ref{alg:mainprocedure} to solve Problem \eqref{eqn:linear-quadratic-ocp}. For any $k\geq 0$, we define
\begin{equation*}
\omega^{(k)} = \max\{\|x^{(k)} - x^*\|_\infty, \|u^{(k)} - u^*\|_\infty, \|\lambda^{(k)} - \lambda^*\|_\infty\}.
\end{equation*}
Then, Algorithm \ref{alg:mainprocedure} exhibits the linear convergence
\begin{equation*}
\omega^{(k+1)} \leq 3c(\sigma)e^{-\rho_Z(\sigma) \tau} \cdot \omega^{(k)},
\end{equation*}
where $c(\sigma)$, $\rho_Z(\sigma)$, and $\tau$ are defined in \Cref{theorem:stepwiseimprovement}.
\end{theorem}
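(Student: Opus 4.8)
The plan is to read off the stated contraction directly from the stagewise improvement bound of \Cref{theorem:stepwiseimprovement}, since the real work — the exponential decay of sensitivity (\Cref{theorem:eds-for-boundary}) and its specialization to the Schwarz subproblems via \Cref{proposition:fullproblemsolvedimpliessubproblemsolved} — has already been carried out. Fix $k\ge 0$. Given an arbitrary $t\in[0,T]$, let $1\le j\le m$ be the unique index with $t\in[t_{j-1},t_j]$, and apply \Cref{theorem:stepwiseimprovement} with its iteration index set to $k+1$ (admissible since $k+1\ge 1$). This yields
\begin{equation*}
\|x^{(k+1)}(t) - x^*(t)\| + \|u^{(k+1)}(t) - u^*(t)\| + \|\lambda^{(k+1)}(t) - \lambda^*(t)\| \le c(\sigma)e^{-\rho_Z(\sigma)\tau}\, E_j^{(k)},
\end{equation*}
where $E_j^{(k)} := \|x^{(k)}(t_j^0) - x^*(t_j^0)\|$ when $j = m$, and $E_j^{(k)} := \|x^{(k)}(t_j^0) - x^*(t_j^0)\| + \|x^{(k)}(t_j^1) - x^*(t_j^1)\| + \|\lambda^{(k)}(t_j^1) - \lambda^*(t_j^1)\|$ when $j < m$.

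The next step is to replace the pointwise boundary errors entering $E_j^{(k)}$ by the global quantity $\omega^{(k)}$. Since $t_j^0, t_j^1 \in [0,T]$, the definition of the sup-norm gives $\|x^{(k)}(t_j^0) - x^*(t_j^0)\| \le \|x^{(k)} - x^*\|_\infty \le \omega^{(k)}$, and likewise $\|x^{(k)}(t_j^1) - x^*(t_j^1)\| \le \omega^{(k)}$ and $\|\lambda^{(k)}(t_j^1) - \lambda^*(t_j^1)\| \le \omega^{(k)}$. Hence $E_j^{(k)} \le 3\omega^{(k)}$ for every $j$ (with the sharper bound $E_m^{(k)} \le \omega^{(k)}$ in the terminal cell). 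Feeding this into the previous display, which holds for every $t\in[0,T]$, gives
\begin{equation*}
\|x^{(k+1)}(t) - x^*(t)\| + \|u^{(k+1)}(t) - u^*(t)\| + \|\lambda^{(k+1)}(t) - \lambda^*(t)\| \le 3c(\sigma)e^{-\rho_Z(\sigma)\tau}\,\omega^{(k)}, \qquad \forall\, t\in[0,T].
\end{equation*}

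Finally, since each of the three nonnegative terms on the left is no larger than their sum, this shows that $\|x^{(k+1)}(t) - x^*(t)\|$, $\|u^{(k+1)}(t) - u^*(t)\|$, and $\|\lambda^{(k+1)}(t) - \lambda^*(t)\|$ are all bounded by $3c(\sigma)e^{-\rho_Z(\sigma)\tau}\omega^{(k)}$ uniformly in $t$; taking the supremum over $t\in[0,T]$ gives $\|x^{(k+1)} - x^*\|_\infty,\ \|u^{(k+1)} - u^*\|_\infty,\ \|\lambda^{(k+1)} - \lambda^*\|_\infty \le 3c(\sigma)e^{-\rho_Z(\sigma)\tau}\omega^{(k)}$, and therefore $\omega^{(k+1)} \le 3c(\sigma)e^{-\rho_Z(\sigma)\tau}\omega^{(k)}$. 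I do not expect a genuine obstacle here: the only mild subtlety is that $t$ must be restricted to the non-overlapping cell $[t_{j-1},t_j]$ enclosing it (which both covers all of $[0,T]$ and guarantees that the distance from $t$ to the endpoints $t_j^0, t_j^1$ of the corresponding subdomain is at least $\tau$), but this is exactly the regime in which \Cref{theorem:stepwiseimprovement} is stated, so no further localization is needed. One may additionally note that the map is a genuine contraction — and hence $\omega^{(k)}\to 0$ geometrically — once the overlap $\tau$ is large enough that $3c(\sigma)e^{-\rho_Z(\sigma)\tau}<1$, which makes precise the claim that the linear rate improves exponentially with the overlap size.
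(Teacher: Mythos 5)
Your proposal is correct and follows essentially the same route as the paper's proof: apply the stagewise improvement bound of \Cref{theorem:stepwiseimprovement} at iteration $k+1$ on each non-overlapping cell $[t_{j-1},t_j]$, bound each of the (at most three) boundary error terms by $\omega^{(k)}$, and take the supremum over $t\in[0,T]$. The paper's version is terser (it groups the two state terms as $2\|x^{(k)}-x^*\|_\infty + \|\lambda^{(k)}-\lambda^*\|_\infty \le 3\omega^{(k)}$), but the argument is identical in substance.
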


\begin{proof}
The result directly follows by applying \Cref{theorem:stepwiseimprovement}:
\begin{align*}
\omega^{(k+1)} & \leq \sup_{t\in[0, T]}\|x^{(k+1)}(t) - x^*(t)\| + \|u^{(k+1)}(t) - u^*(t)\| + \|\lambda^{(k+1)}(t) - \lambda^*(t)\| \\
&\le c(\sigma)e^{-\rho_Z(\sigma) \tau} \rbr{2\|x^{(k)}(t) - x^*(t)\|_\infty  + \|\lambda^{(k)}(t) - \lambda^*(t)\|_\infty}\\
& \leq 3c(\sigma)e^{-\rho_Z(\sigma) \tau} \cdot \omega^{(k)},
\end{align*}
which concludes that the convergence is linear.
\end{proof}

We see from \Cref{proposition:linear-convergence-in-l-infty} that the linear rate $3c(\sigma)e^{-\rho_Z(\sigma) \tau}< 1$ holds as long as
\begin{equation*}
\tau > \log(3c(\sigma))/\rho_Z(\sigma),
\end{equation*}
and the above threshold depends only on the controllability parameter $\sigma\ll T$ in Assumption \ref{assumption:uniform-controllability}, but is~independent of the full horizon length $T$. This makes our algorithm particularly promising for long-horizon~problems. Furthermore, the linear rate $3c(\sigma)e^{-\rho_Z(\sigma) \tau}$ improves exponentially with respect to the overlap size $\tau$.
Overall, the results in \Cref{proposition:linear-convergence-in-l-infty} demonstrate the advantage of incorporating domain overlaps during optimization in \Cref{alg:mainprocedure}, as it accelerates convergence in the interior regions. The solutions in the overlapped regions can then be discarded while concatenating the remainder. In the numerical experiment section, we provide an example to support this insight.

\section{Numerical Experiment}  \label{sec:experiments} 

To illustrate the effectiveness of the proposed overlapping Schwarz scheme, we draw inspiration from a nonlinear optimal control problem with a final-time constraint studied in \cite{Schorlepp2023Scalable},~and consider a linearized version of it around the origin. We aim to demonstrate three promising properties~of~our Schwarz scheme~(\Cref{alg:mainprocedure}) via the experiment:
\begin{enumerate}[label=\textbf{(\alph*)}, topsep=3pt]
\setlength\itemsep{0.2em}
\item The overlapping Schwarz scheme converges linearly, with the linear convergence rate improving~exponentially fast in terms of the overlap size.
\item Higher-order time integrators can be easily incorporated when solving the forward and backward~equations \eqref{subeqn:pmp-state} and \eqref{subeqn:pmp-adjoint} for the truncated subproblems.
\item In contrast to ``discretize-then-optimize" approaches that fix the time step in advance, our continuous-time Schwarz scheme supports adaptive time-stepping, allowing efficient resolution of stiff dynamics.$\quad$
\end{enumerate}

To this end, we consider the original nonlinear OCP in \cite{Schorlepp2023Scalable} formulated in $\mR^2$ as:
\begin{subequations} \label{eqn:rawnonlineartestproblem}
\begin{align}
\min_{u(\cdot), x(\cdot)} \quad & \frac{1}{2}\int_0^T \|x(t)\|^2 + \|u(t)\|^2 \, dt, \label{subeqn:objective} \\
\text{s.t.} \quad\;\; & \dot{x}(t) = b_{\xi}(x(t)) + N u(t), \quad t \in (0, T], \label{subeqn:dynamics} \\
& x(0) = \begin{bmatrix} 0; \; 0 \end{bmatrix}, \quad \phi(x(T)) = \theta, \label{subeqn:boundary}
\end{align}
\end{subequations}
where $\theta\in\mR$ is an adjustable parameter, $N\in \mR^{2\times 2}$ is the control coefficient matrix, and the state dynamics~map $b_\xi : \mathbb{R}^2 \to \mathbb{R}^2$ and the terminal constraint map $\phi : \mathbb{R}^2 \to \mathbb{R}$ are defined by
\begin{equation*}
b_\xi(x) \coloneqq \begin{bmatrix}
-x_1 - x_1 x_2 \\
-\xi x_2 + x_1^2
\end{bmatrix}, 
\quad \quad \quad 
\phi(x) \coloneqq x_1 + 2x_2.
\end{equation*}
Here, $\xi > 0$ is a stiffness parameter that characterizes the relative decay rate of the two state components, and the terminal constraint imposes a scalar condition on the final state. We linearize the nonlinear dynamics~\eqref{subeqn:dynamics} at the equilibrium point $x = 0$ and incorporate the terminal constraint \eqref{subeqn:boundary} into the objective via a soft~penalty, resulting in the following linear-quadratic OCP:
\begin{subequations} \label{eqn:fullproblem}
\begin{align}
\min_{u(\cdot), x(\cdot)} \quad & \frac{1}{2}\int_0^T \|x(t)\|^2 + \|u(t)\|^2 \, dt + \frac{\alpha}{2} \left(\phi(x(T)) - \theta\right)^2, \label{subeqn:objective_full} \\
\text{s.t.} \quad\;\; &\dot{x}(t) = M_{\xi}x(t) + N u(t), \quad t \in (0,T], \label{subeqn:dynamics_full} \\
&x(0) = 0, \label{subeqn:boundary_full}
\end{align}
\end{subequations}
where \( \alpha > 0 \) is the penalty parameter (chosen sufficiently large), and $M_\xi\in \mR^{2\times 2}$ is the Jacobian of $b_\xi$~at~origin given by $M_{\xi} := \nabla b(0;\xi) = \text{diag}(-1, -\xi)$.

In our experiment, we first follow \cite{Schorlepp2023Scalable} and set $N = \text{diag}(1,0.25)$, $T=5$, $\theta=3$, $\xi=4$, and $\alpha=10^2$.~We~will also vary the parameters by increasing $\xi$ when testing the Schwarz method on stiff dynamics. Throughout~the experiment, the true reference solution $(x^*, u^*, \lambda^*)$ is computed by applying the forward Euler discretization to the full problem \eqref{eqn:fullproblem} on a~fine uniform grid with the time step $\Delta t = 10^{-3}$, and solving the resulting discrete problem using MATLAB’s built-in function \texttt{fmincon}.
For the overlapping Schwarz method, we partition the time interval $[0, T]$ into \( m = 3 \) uniform subdomains, each corresponding to a subproblem as defined in~\eqref{eqn:defineparameterizedsubproblem}.
We express the overlap parameters $\tau_j^0 = \tau_j^1 = \tau$ as a fraction of the subdomain length, and will vary different $\tau$ when we illustrate the improvement of the linear rate in $\tau$ (i.e., the goal \textbf{(a)}). 
Each subproblem is solved in parallel using~\Cref{alg:gradient-descent-subinterval}, with the step size \( \eta = 10^{-2} \) and the initial control sampled as $u(t) \sim \mathcal{N}(0, I)$. Note that Algorithm \ref{alg:gradient-descent-subinterval} (Lines 3 and 4) requires integrating the state and adjoint equations forward and backward, respectively (cf. \eqref{subeqn:pmp-state}, \eqref{subeqn:pmp-adjoint}). 
We will test the \mbox{robustness}~of~the~Schwarz~method~to~\mbox{different}~time~\mbox{integrators}~(i.e., the goal \textbf{(b)}), as well as the benefits of enabling adaptive time-stepping when the dynamics is stiff (i.e., the goal \textbf{(c)}). We terminate Algorithm \ref{alg:gradient-descent-subinterval} when the norm of the local objective gradient falls below $10^{-6}$. Lastly,~let $(x^{(k)}, u^{(k)}, \lambda^{(k)})$ denote the $k$-th Schwarz iterate; the convergence is measured by the maximum pointwise deviation from the reference solution:
 \begin{equation*}  \label{eqn:error-metric}
e^{(k)} \coloneqq \max_{0 \le j \le \frac{T}{\Delta t}} \left(\|x^{(k)}(t_j) - x^*(t_j)\| + \|u^{(k)}(t_j) - u^*(t_j)\| + \|\lambda^{(k)}(t_j) - \lambda^*(t_j)\|\right) \quad\;\; \text{with } \;\; t_j = j\cdot\Delta t.
\end{equation*}

\begin{figure}[t]
\centering
\includegraphics[width=6cm]{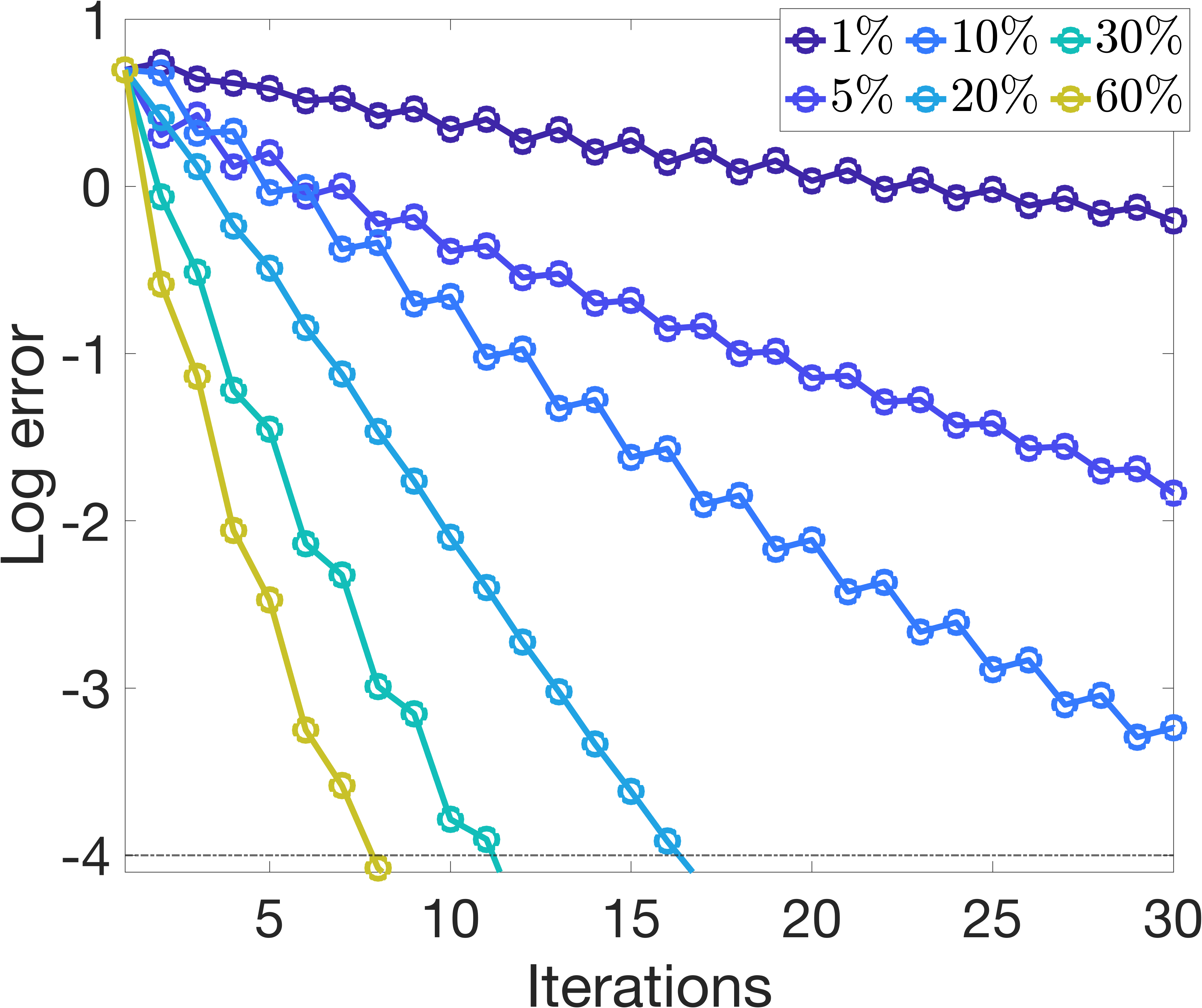}
\hspace{0.5cm}
\includegraphics[width=6cm]{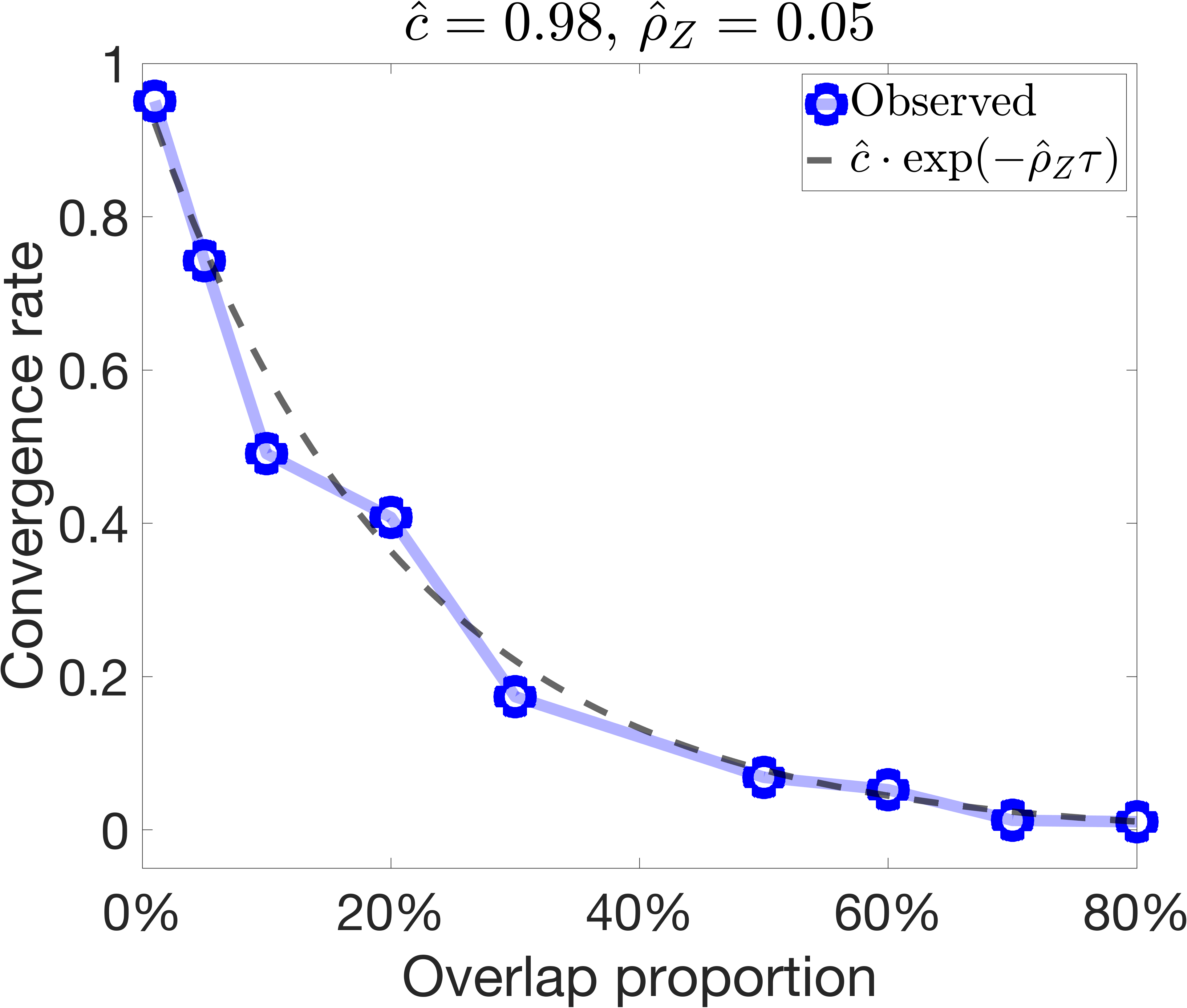}
\caption{(Left) Convergence of the overlapping Schwarz method under varying overlap sizes (\( \{1\%, 5\%, 10\%, 20\%, 30\%, 60\%\} \) of the subdomain length). Larger overlaps yield faster convergence. 
(Right) Observed convergence rate and its exponential fit. For each overlap size, we estimate the convergence rate by averaging the ratios of two consecutive errors, and fit an exponential curve of the theoretical form $c\exp(-\rho_Z \tau)$. The fitted values are $\hat{c} = 0.98$ and $\hat{\rho}_Z = 0.05$; and we have $\rho_Z<0.02$ at the significance level of  5\%. We observe that the convergence rate improves exponentially in the overlap size/proportion, which is consistent with \Cref{proposition:linear-convergence-in-l-infty}.}
\label{fig:overlappingschwarzconvergence}
\end{figure}

\noindent$\bullet$ \textbf{Goal (a)}. 
We verify~\Cref{proposition:linear-convergence-in-l-infty} by illustrating the convergence behavior of the overlapping Schwarz method. We~vary~the~\mbox{overlap}~\mbox{proportion}~\mbox{$\tau\in\{1\%, 5\%, 10\%, 20\%, 30\%, 60\%\}$}~\mbox{relative}~to~the~\mbox{subdomain}~length, and the forward and backward equations of the subproblems (cf. Algorithm \ref{alg:gradient-descent-subinterval}, Lines 3 and 4) are simply integrated using the forward Euler method.

The convergence behavior is illustrated in~\Cref{fig:overlappingschwarzconvergence} (left). From the figure, we observe that the errors decay linearly over the Schwarz iterations, with larger overlaps yielding faster convergence, which indicates~that~the linear rate improves as the overlap size increases.
To examine the linear rate more closely, we estimate it~by averaging the ratios of two consecutive errors for each $\tau$ and plot the resulting rates versus~the~overlap~proportion in Figure \ref{fig:overlappingschwarzconvergence} (right). 
We find that the observed convergence rate aligns with a fitted model of the form \( \hat{c}_Z \exp(-\hat{\rho}_Z \tau) \), suggesting that the rate improves precisely exponentially with the overlap size. 
This further supports the theoretical insight of the exponential improvement of the rate with~the~overlap~size~in~\Cref{proposition:linear-convergence-in-l-infty}. Overall, the empirical results validate our analytic rate bound and highlight the reasoning of using large overlaps to accelerate convergence.

\vskip5pt
\noindent$\bullet$ \textbf{Goal (b).} 
We now investigate the effect of applying different time integration schemes to the subproblems on the convergence behavior of the overlapping Schwarz method.
In this experiment, we fix the overlap size at $5\%$ of the subdomain length. To clearly isolate the effect of the solver choice on convergence, we employ a coarser temporal discretization when applying the integration methods, with a step size of $\Delta t = 0.05$ (recall that the reference solution is computed on a much finer grid with $\Delta t=10^{-3}$).

Figure~\ref{fig:compare-numerical-schemes} (left) compares the explicit forward Euler (FE), implicit backward Euler (BE), and explicit Runge-Kutta (RK45) methods for solving the subproblem dynamics \eqref{subeqn:pmp-state} and \eqref{subeqn:pmp-adjoint}. 
We observe that higher-order solvers attain improved solution accuracy, roughly half an additional order of accuracy per upgrade in method fidelity. While higher-order solvers are indeed more computationally expensive, they are applied only to the subproblems within the Schwarz scheme.
Furthermore, despite the differences in numerical integration error, the overall convergence with respect to Schwarz iterations remains linear, consistent with~\Cref{proposition:linear-convergence-in-l-infty}.

\vskip5pt
\noindent$\bullet$ \textbf{Goal (c).} A key advantage of our continuous-time formulation is its compatibility with a wide range~of~time-stepping schemes. In contrast, high-order solvers such as RK45 are not easily integrated into discretize-then-optimize methods~\cite{Shin2020Overlapping, Na2022Convergence}. Our approach enables adaptive and high-fidelity solvers to be used without~compromising the structure of the optimality system.

To demonstrate the benefits of adaptive time-stepping, we compare the behavior of solvers in a stiff setting with $\xi=15$ and $T=\theta=10$, where the forward Euler method becomes unstable. 
In this experiment,~we~compare the fixed-step FE method with the adaptive solver \texttt{ode23}~\cite{Shampine1997MATLAB}, applied to the Hamiltonian systems of~the local subproblems.
As expected, we see from Figure \ref{fig:compare-numerical-schemes} (right) that forward Euler fails to converge unless~the time step is reduced significantly. In contrast, \texttt{ode23} maintains stability and consistently~achieves~lower~error across Schwarz iterations, demonstrating the robustness of our method when paired with adaptive integration.

\begin{figure}[t]
\centering
\includegraphics[width=6cm]{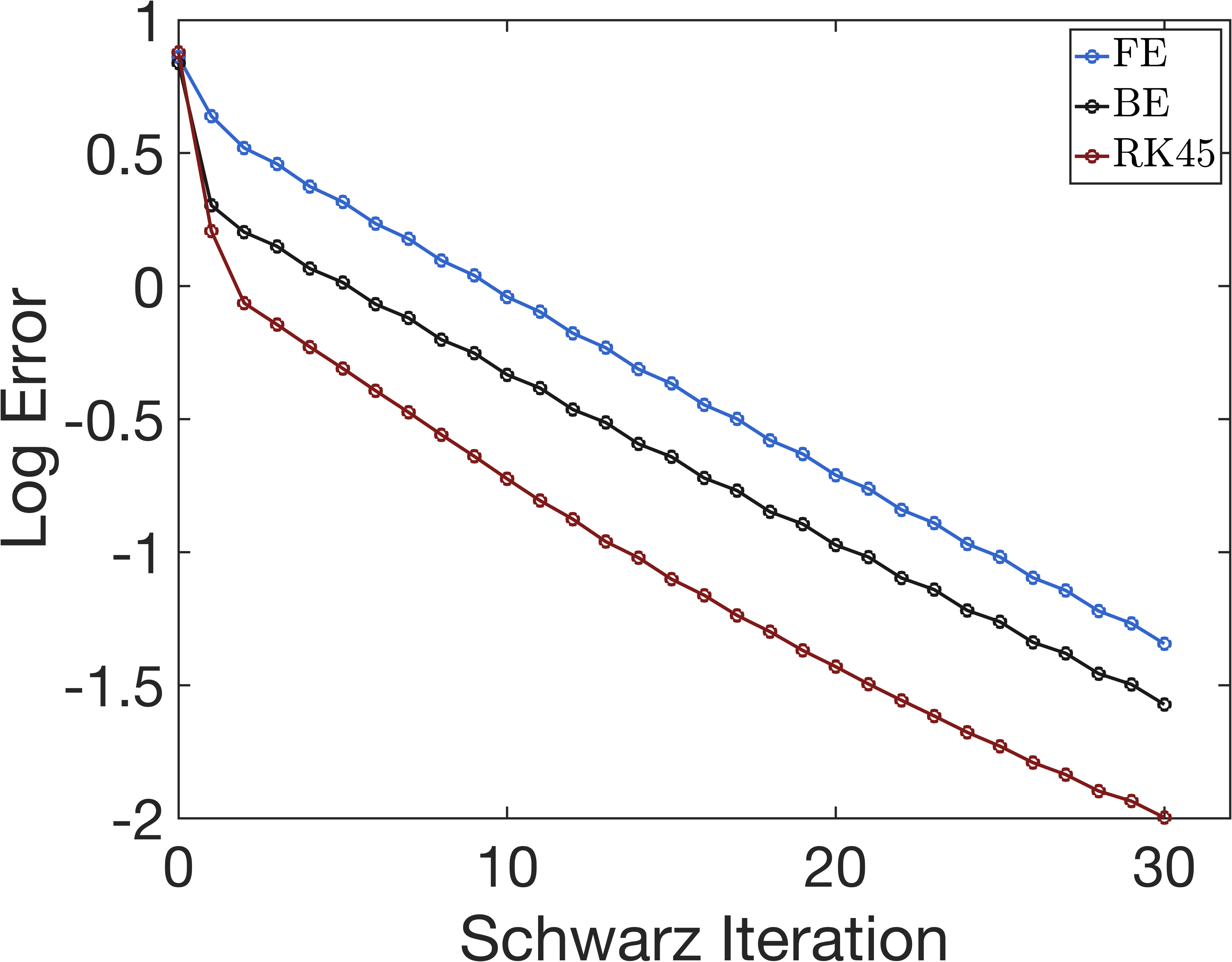}
\hspace{0.5cm}
\includegraphics[width=5.8cm]{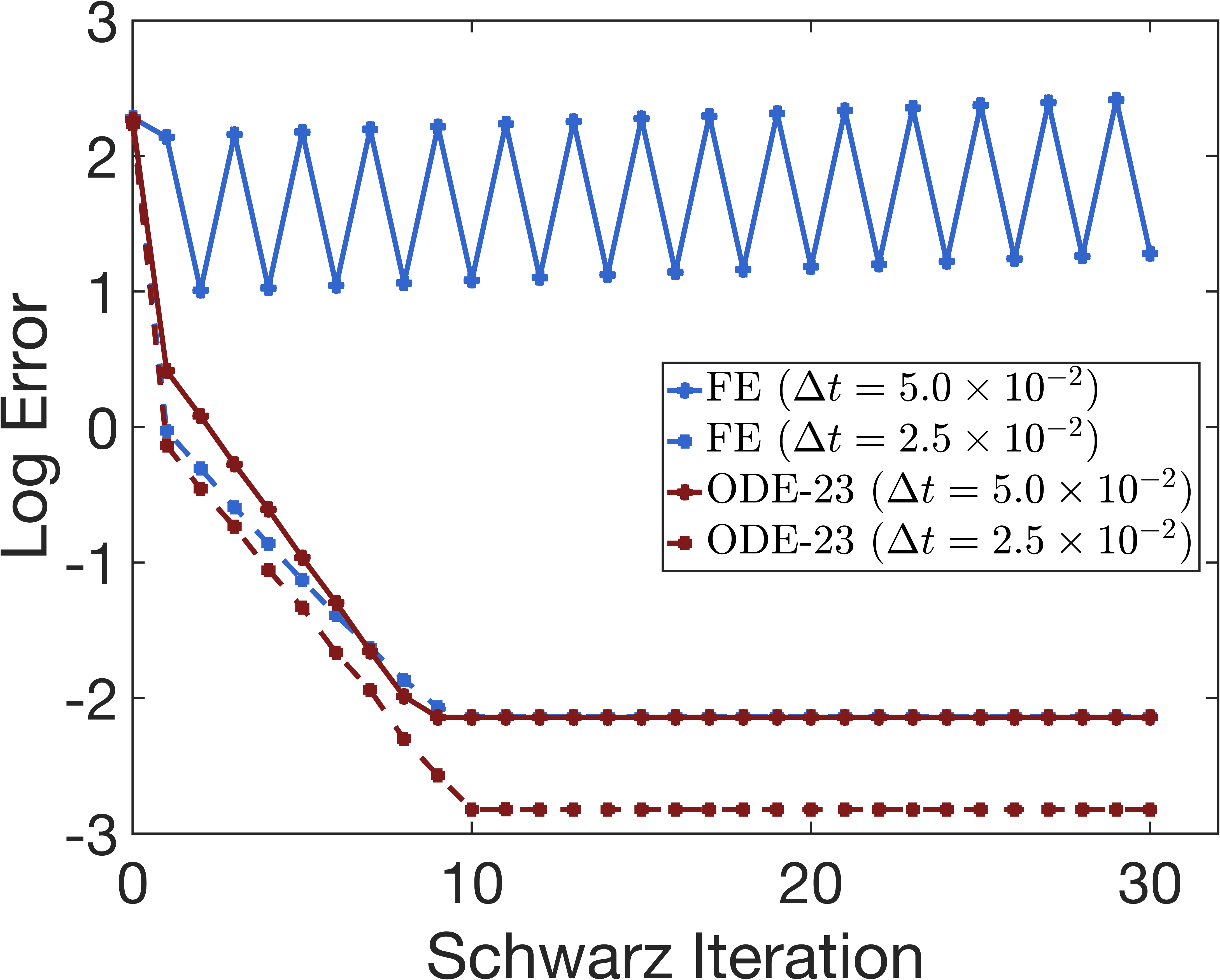}
\caption{(Left) Comparison of numerical solvers in resolving the subproblem Hamiltonian dynamics. The Runge-Kutta method (RK45) yields the highest accuracy per iteration. 
(Right) Comparison of first 30 Schwarz iterations for different solvers and time step sizes \( \Delta t \). Adaptive solvers maintain low errors even on coarse grids, while FE accuracy degrades significantly in the stiff regime.}\label{fig:compare-numerical-schemes}
\end{figure}

\section{Conclusion and Future Work}\label{sec:conclusions} 

In this work, we introduced a continuous-time overlapping Schwarz decomposition method for linear-quadratic optimal control problems and established its convergence properties. The method decomposes the global control problem into smaller subproblems defined on overlapping temporal subdomains, with appropriate boundary conditions specified at the interfaces to enforce consistency between adjacent subdomains. Our analysis reveals that the method leverages the intrinsic exponential decay of sensitivity in continuous-time optimal control problems, such that the impact of imperfect boundary conditions on the solution trajectory decays exponentially as one moves away from the subdomain boundaries. As a result, the Schwarz method achieves linear convergence by recursively updating the boundary conditions. Notably, the convergence rate improves exponentially with the size of the overlap. The continuous-time formulation offers several advantages, including compatibility with higher-order and adaptive time-stepping schemes, as well as improved handling of stiff system dynamics. Numerical experiments corroborate our~theoretical findings and demonstrate the practical effectiveness of the method on a stiff linear-quadratic optimal control problem.

Several promising extensions can be pursued. First, we believe it is possible to extend the overlapping Schwarz decomposition from linear-quadratic program to the nonlinear case, as a continuous-time generalization of \cite{Na2022Convergence, Shin2020Overlapping}. However, doing so would require a functional analog of the convexification procedure \cite{Verschueren2017Sparsity}, which we leave for future exploration. 
Second, in this work we have adopted a gradient-based optimization approach in solving continuous-time subproblems (Algorithm \ref{alg:gradient-descent-subinterval}). The local convergence of the subproblems can be further enhanced using second-order optimization solvers \cite{Hinze2001Second}, which can be implemented by solving second-order adjoint equations. Other practical directions include developing robust solvers compatible with the Schwarz scheme. Possible extensions include directly solving matrix Riccati equations \cite{Schorlepp2023Scalable} or applying symplectic Runge-Kutta methods \cite{SanzSerna2016Symplectic}.
Finally, there are opportunities to apply the Schwarz method in deep learning, particularly in settings where models can be framed as locally \mbox{linear-quadratic} optimal control problems of the form \eqref{eqn:linear-quadratic-ocp}. The established convergence properties of the Schwarz method serve as a foundation for practical applications of broad control methods inspired~by~\mbox{turnpike}~analysis~\cite{Gruene2021Abstract},~which~aligns~well~with~deep learning architectures that can be interpreted as discrete dynamical systems, such as neural ODEs \cite{Geshkovski2022Turnpike}.~Future work can explore these intersections, leading to model- and data-parallel distributed learning algorithms.

\appendix
\SingleSpacedXI

\makeatletter
\renewcommand{\thesection}{\Alph{section}}
\makeatother

\section{Gradient Method for Schwarz Subproblem}  \label{appendix:deriveadjointgradient}

To solve the subproblems $\P_j([t_j^0, t_j^1]; p_j,q_j)$, we can apply gradient-based optimization method in infinite-dimensional space summarized in~\Cref{alg:gradient-descent-subinterval}. 

\begin{algorithm}[!htb]
\caption{Gradient Descent for Subproblem $\P_j([t_j^0, t_j^1]; p_j,q_j)$}\label{alg:gradient-descent-subinterval}
\begin{algorithmic}[1]
\State \textbf{Input:} Subproblem control $u_j^{(0)}$; initial and terminal state parameters $(p_j, q_j)$; step size $\eta > 0$.
\For{$\ell = 0, 1, \ldots$}
\State Integrate state equation forward from $t_j^0$ to $t_j^1$ to obtain $x_j^{(\ell)}$ (see \eqref{eqn:mild-solution});
\State Integrate adjoint equation backward from $t_j^1$ to $t_j^0$ to obtain $\lambda_j^{(\ell)}$;
\State Compute the (total) gradient of the local objective:
\begin{equation*}
\left(\frac{\delta \widetilde{\mathcal{J}}_j}{\delta u_j}\right)^{(\ell)}(t) \longleftarrow H(t)x_j^{(\ell)}(t) + R(t)u_j^{(\ell)}(t) + B^\top(t) \lambda_j^{(\ell)}(t),\quad\quad t\in[t_j^0, t_j^1];
\end{equation*}
\State Update control: $u_j^{(\ell+1)} \gets u_j^{(\ell)} - \eta \cdot \left(\frac{\delta \widetilde{\mathcal{J}}_j}{\delta u_j}\right)^{(\ell)}$;
\EndFor
\end{algorithmic}
\end{algorithm}

We provide a few clarifications regarding the derivation of the gradient (i.e., descent direction). In~particular, the total gradient of the subproblem cost functional with respect to the control $u_j$ is derived using~first-order perturbation analysis~\citep{Plessix2006review}. The first-order optimality condition given by the PMP is numerically solved in an ``open-loop'' fashion: a control $u_j$ is input, the states $x_j$ and adjoint states $\lambda_j$ are first~subsequently~solved, and the control is then updated using information of the states and adjoint states in a direction that minimizes the cost functional. In this formulation, we account for the fact that the states~$x_j = x_j[u_j; p_j]$~are~fully~determined by both the control $u_j$ and the initial boundary parameter $p_j$ through the solution~\eqref{eqn:mild-solution}.

We begin by reviewing the subproblem~\eqref{eqn:defineparameterizedsubproblem} as a cost functional in $u_j$ only:
\begin{subequations} \label{eqn:general-ode-ocp-appendix}
\begin{align}
\min_{u_j(\cdot), x_j(\cdot)} \quad &  \widetilde{\mathcal{J}}_j[u_j; p_j, q_j] =\mathcal{J}_j[u_j, x_j[u_j; p_j]; q_j], \label{subeqn:objective_general} \\
\text{s.t.} \quad\;\;\; & \dot{x}_j(t) = A(t)x_j(t)+B(t)u_j(t), \quad t \in (t_j^0, t_j^1], \label{subeqn:dynamics_general} \\
&x_j(t_j^0) = p_j. \label{subeqn:boundary_general}
\end{align}
\end{subequations}
Let $u_j$ denote any control function for~\eqref{eqn:general-ode-ocp-appendix}. Consider a perturbation at $u_j$ in the direction of a function $\delta u_j$. The total variation (i.e. upon accounting for the variations of $x_j$ as a result of variations in $u_j$) of the cost functional can be decomposed as the following:
\begin{align}\label{eqn:totalderivative-du}
\left\langle\frac{\delta \widetilde{\mathcal{J}}_j[u_j;p_j,q_j]}{\delta u_j}, \delta u_j\right\rangle & \coloneqq \lim_{\epsilon\rightarrow 0}\frac{\widetilde{\mathcal{J}}_j[u_j+\epsilon\cdot\delta u_j; p_j, q_j] - \widetilde{\mathcal{J}}_j[u_j;p_j,q_j]}{\epsilon} \nonumber\\
& = \lim_{\epsilon\rightarrow 0}\frac{\mathcal{J}_j[u_j+\epsilon\cdot\delta u_j, x_j[u_j+\epsilon\cdot\delta u_j; p_j];q_j] - \mathcal{J}_j[u_j; x_j[u_j; p_j];q_j]}{\epsilon} \nonumber\\
& = \left\langle \frac{\partial \mathcal{J}_j[u_j, x_j; q_j]}{\partial u_j}, \delta u_j \right\rangle + \left\langle \frac{\partial \mathcal{J}_j[u_j, x_j; q_j]}{\partial x_j}, \delta x_j \right\rangle,
\end{align}
where we define:
\begin{equation*}
\delta x_j \coloneqq \lim_{\epsilon\rightarrow 0}\frac{x_j[u_j+\epsilon\cdot\delta u_j; p_j] - x_j[u_j;p_j]}{\epsilon}.
\end{equation*} 
To obtain the formula of $\delta \widetilde{\mathcal{J}}_j/\delta u_j$, we would like to convert the term involving $\delta x_j$ into an expression~involving $\delta u_j$ by exploiting the structure of the PMP~\eqref{equ:PMP}. We first derive a differential equation that $\delta x_j$ satisfies. Let $x_j^{\epsilon} := x_j[u_j+\epsilon\cdot\delta u_j; p_j]$. Then, $x_j^{\epsilon}$ is the solution to the following system on $[t_j^0, t_j^1]$:
\begin{equation*}
\dot{x}_j^{\epsilon}(t) = A(t)x_j^{\epsilon}(t) + B(t)(u_j(t)+\epsilon\cdot\delta u_j(t)), \quad t\in(t_j^0, t_j^1],\quad\quad x_j^{\epsilon}(t_j^0) = p_j,
\end{equation*} 
which is compared to the original system on $[t_j^0, t_j^1]$:
\begin{equation*}
\dot{x}_j(t) = A(t)x_j(t) + B(t)u_j(t), \quad t\in(t_j^0, t_j^1],\quad\quad x_j(t_j^0) = p_j.
\end{equation*}
Subtracting the above equations, dividing by $\epsilon$, and taking the limit as $\epsilon\rightarrow 0$, we have
\begin{subequations} \label{eqn:perturbedode}
\begin{align}
\delta \dot{x}_j(t) &= A(t) \delta x_j(t) + B(t)\delta u_j(t), \quad t\in (t_j^0, t_j^1], \label{subeqn:perturb-dynamics}\\
\delta x_j(t_j^0) &= 0. \label{subeqn:initial-perturb-zero}
\end{align}
\end{subequations}
On the other hand, let $\lambda_j(t)$ denote the adjoint states satisfying the differential equation (cf.~\Cref{theorem:pontryagin}):
\begin{subequations} \label{eqn:adjoint-dynamics}
\begin{align}
\dot{\lambda}_j(t) &= -A^\top (t)\lambda_j(t) - \nabla_x \left(
\frac12\begin{bmatrix}
x_j(t) \\ u_j(t)
\end{bmatrix}^\top\begin{bmatrix}
Q(t) & H^\top(t) \\
H(t) & R(t)
\end{bmatrix}\begin{bmatrix}
x_j(t) \\ u_j(t)
\end{bmatrix}\right) \nonumber\\
&= -A^{\top}(t)\lambda_j(t) - Q(t)x_j(t) - H^{\top}(t)u_j(t), \quad\quad t\in [t_j^0, t_j^1),  \label{subeqn:dynamics-of-adjoint} \\
\lambda_j(t_{j}^1) &= \nabla_xL_j(x_j(t_j^1); q_j), \label{subeqn:adjoint-terminal-condition}
\end{align}
\end{subequations}
where $L_j$ is as defined in~\eqref{eqn:terminalcostparameterized}. Then we take the inner product between the functions $\lambda_j$ and $\delta \dot{x}_j$, and apply integration by parts to obtain
\begin{equation}  \label{eqn:adjoint-derivative-product1}
\left\langle \lambda_j, \delta \dot{x}_j \right\rangle
= \int_{t_j^0}^{t_j^1}\lambda_j^{\top}(t)\delta \dot{x}_j(t) dt = \lambda_j^{\top}(t_j^1)\delta x_j(t_j^1) - \underbrace{\cancel{\lambda_j^{\top}(t_j^0)\delta x_j(t_j^0)}}_{=\; 0, \; \eqref{subeqn:initial-perturb-zero}} - \left\langle \dot{\lambda}_j, \delta x_j \right\rangle.
\end{equation}
Furthermore, we have
\begin{align}\label{eqn:adjoint-derivative-product2}
\left\langle  \lambda_j, \delta \dot{x}_j \right\rangle &\; \stackrel{\mathclap{\eqref{subeqn:perturb-dynamics}}}{=} \;
\int_{t_j^0}^{t_j^1}\lambda_j^{\top}(t)(A(t)\delta x_j(t)+B(t)\delta u_j(t))dt  \nonumber\\
&\;=\; \int_{t_j^0}^{t_j^1}(A^{\top}(t)\lambda_j(t))^{\top}\delta x_j(t)dt + \int_{t_j^0}^{t_j^1}(B^{\top}(t)\lambda_j(t))^{\top}\delta u_j(t)dt.
\end{align}
Combining~\eqref{eqn:adjoint-derivative-product1} and~\eqref{eqn:adjoint-derivative-product2}, we obtain
\begin{equation}\label{eqn:adjoint-relation-with-perturbation-u}
\lambda_j^{\top}(t_j^1)\delta x_j(t_j^1) - \int_{t_j^0}^{t_j^1}
\left(\dot{\lambda}_j(t)+{A}^{\top}(t)\lambda_j(t)\right)^{\top} \delta x_j(t)dt = \int_{t_j^0}^{t_j^1}
\left( B^{\top}(t)\lambda_j(t) \right)^{\top}\delta u_j(t)dt.
\end{equation}
Now, we revisit our main objective to evaluate~\eqref{eqn:totalderivative-du}. We have
\begin{align*}
\left\langle 
\frac{\partial\mathcal{J}_j[u_j, x_j; q_j]}{\partial u_j}, \delta u_j \right\rangle & \coloneqq \lim_{\epsilon\rightarrow 0}\frac{\mathcal{J}_j[u_j+\epsilon\cdot\delta u_j, x_j[u_j]; p_j] - \mathcal{J}_j[u_j;x_j[u_j]; p_j]}{\epsilon} \\
& = \int_{t_j^0}^{t_j^1}(H(t)x_j(t)+R(t)u_j(t))^{\top}\delta u_j(t)dt \\
\left\langle \frac{\partial \mathcal{J}_j[u_j, x_j; q_j]}{\partial x_j}, \delta x_j \right\rangle & \coloneqq
\lim_{\epsilon\rightarrow 0}\frac{\mathcal{J}_j[u_j, x_j[u_j]+\epsilon\cdot\delta x_j; p_j] - \mathcal{J}_j[u_j;x_j[u_j]; p_j]}{\epsilon} \\
& = \int_{t_j^0}^{t_j^1} (Q(t)x_j(t)+H^{\top}(t)u_j(t))^{\top}\delta x_j(t)dt + \nabla_xL_j^{\top}(x_j(t_j^1);q_j)\delta x_j(t_j^1) \\
& \stackrel{\mathclap{\eqref{eqn:adjoint-dynamics}}}{=} 
-\int_{t_j^0}^{t_j^1}(\dot{\lambda}_j(t) + A^{\top}(t)\lambda_j(t))^{\top}\delta x_j(t)dt + \lambda_j^{\top}(t_j^1)\delta x_j(t_j^1) \\ &\stackrel{\mathclap{\eqref{eqn:adjoint-relation-with-perturbation-u}}}{=} \int_{t_j^0}^{t_j^1}
\left( B^{\top}(t)\lambda_j(t)\right)^{\top}\delta u_j(t)dt.
\end{align*}
Plugging the above display back to \eqref{eqn:totalderivative-du}, we know the total variation with respect to control $u_j$ can be~evaluated in closed-form as:
\begin{equation*}
\left\langle  \frac{\delta \widetilde{\mathcal{J}}_j[u_j;p_j,q_j]}{\delta u_j}, \delta u_j \right\rangle = \int_{t_j^0}^{t_j^1}\left( H(t)x_j(t)+R(t)u_j(t)+B^{\top}(t)\lambda_j(t) \right)^{\top}\delta u_j(t)dt.
\end{equation*}
Therefore, the gradient is given by:
\begin{equation}  \label{eqn:functional-gradient}
\left(\frac{\delta \widetilde{\mathcal{J}}_j[u_j;p_j,q_j]}{\delta u_j}\right)(t) = H(t)x_j(t) + R(t)u_j(t) + B^{\top}(t)\lambda_j(t), \quad\quad t\in [t_j^0, t_j^1].
\end{equation}
We note that \eqref{eqn:functional-gradient} can be computed efficiently using a forward-backward-in-time procedure (cf.~\Cref{alg:gradient-descent-subinterval}) for numerical integration, which supports adaptive time-stepping and is compatible with numerous standard solvers. Other gradient-based algorithms, such as steepest descent, conjugate gradient, or BFGS in infinite-dimensional space can be similarly defined through variational analysis
\citep{Lasdon1967conjugate, Neuberger2009Continuous}.

\section{Proof of~\Cref{theorem:state-closed-loop-equation}}\label{appendix:closed-loop-sensitivity-ocp}

The proof proceeds by directly applying Hamilton–Jacobi–Bellman (HJB) equations (cf. \Cref{theorem:hjbequation}) to the parameterized optimal control problem~\eqref{eqn:parameterized-linear-quadratic-program}. We define the Hamiltonian function:
\begin{equation*}
\mathcal{H}(t, x, u, \lambda; d) \coloneqq \frac12\begin{bmatrix}
x\\ u\\ d
\end{bmatrix}^\top\begin{bmatrix}
Q(t) & H^\top(t) & G^\top(t) \\
H(t) & R(t) & W^\top(t) \\
G(t) & W(t) & 0
\end{bmatrix}\begin{bmatrix}
x\\ u\\ d
\end{bmatrix} + \lambda^\top(A(t)x + B(t)u + C(t)d).
\end{equation*}
Since the terminal cost in~\eqref{eqn:parameterized-linear-quadratic-program} is quadratic in $x(T)$, we propose a quadratic ansatz for the value function:
\begin{equation}\label{eqn:quadratic-ansatz}
J^*(t, x) := \frac12x^\top S(t) x + x^\top v(t) + \frac12z(t),
\end{equation}
where the symmetric matrix \( S: [0,T] \to \mathbb{R}^{n_x \times n_x} \) and the vectors \( v: [0,T] \to \mathbb{R}^{n_x} \) and \( z: [0,T] \to \mathbb{R} \) are to be determined. We know that the gradient of the value function with respect to the state yields the optimal adjoint trajectory \cite[(4)]{Bokanowski2021Relationship}
\begin{equation}\label{nequ:4}
\lambda^*(t) = \nabla_x J^*(t, x) = S(t)x + v(t).
\end{equation}
Substituting the above expression into the Hamiltonian, we obtain
\begin{multline*}
\mathcal{H}(t, x, u, \nabla_x J^*(t, x); d) = \frac12x^\top Q(t) x +  x^\top H^\top(t) u +  x^\top G^\top(t) d  + \frac12u^\top R(t) u + u^\top W^\top(t) d \\
+ (S(t)x + v(t))^\top (A(t)x + B(t)u + C(t)d).
\end{multline*}
We now proceed to compute the minimization in~\eqref{subeqn:minimization-pde} with respect to $u$ for each $t\in [0,T]$, derive the associated Riccati and adjoint equations, and validate the resulting closed-loop dynamics as stated in Theorem \ref{theorem:state-closed-loop-equation}.~From Assumption \ref{assumption:matrices-are-bounded-below}, the Hamiltonian is strongly convex in $u$, so the minimizer exists and~is~unique~for~every~$t \in [0, T]$. Taking the gradient of the Hamiltonian with respect to $u$ and setting it to $0$ gives the following~condition that the optimal control $u^*$ must satisfy:
\begin{equation*}
\nabla_u \mathcal{H}(t, x^*, u^*, \nabla_x J^*(t, x^*); d) = H(t) x^*(t) + R(t) u^*(t) + W^\top(t) d(t) + B^\top(t) S(t) x^*(t) + B^\top(t) v(t) = 0.
\end{equation*}
Upon solving this equation, we obtain
\begin{equation} \label{eqn:explicitoptimalcontrol}
u^*(t) = -R^{-1}(t) \cbr{ H(t) x^*(t) + B^\top(t) S(t) x^*(t) + B^\top(t) v(t) + W^\top(t) d(t) }.
\end{equation}
Substituting~\eqref{eqn:explicitoptimalcontrol} into the Hamiltonian, we obtain the minimal value. In particular, we let $\gamma(t) := H(t)x^* + B^\top(t)S(t)x^* + B^\top(t)v(t) + W^\top(t)d(t)$ and have for every $t\in[0, T]$ that
\begin{align*}
\mathcal{H}(t, x^*, u^*, \nabla_x J^*; d) & = \frac12(x^*)^\top Q(t) x^* - (x^*)^\top H^\top(t) R^{-1}(t) \gamma(t) + (x^*)^\top G^\top(t) d(t) + \frac12\gamma^\top(t) R^{-1}(t) \gamma(t)\\
& \quad	- \gamma^\top(t) R^{-1}(t) W^\top(t) d(t) + (x^*)^\top S(t) A(t) x^* - (x^*)^\top S(t) B(t) R^{-1}(t) \gamma(t) \\
& \quad + (x^*)^\top S(t) C(t) d(t) + (x^*)^\top A^\top(t) v(t)  - \gamma^\top(t) R^{-1}(t) B^\top(t) v(t)	+ d^\top(t) C^\top(t) v(t).
\end{align*}
To clearly isolate the above terms and match them with those defined in~\eqref{eqn:quadratic-ansatz}, we let
\begin{equation*}
\mathcal{H}(t,x^*,u^*,\nabla_xJ^*;d) = \mathcal{Q}(t,x^*;d)+\mathcal{L}(t, x^*;d)+\mathcal{C}(t,x^*;d),
\end{equation*} 
where \( \mathcal{Q}, \mathcal{L}, \mathcal{C} \) respectively denote terms that are quadratic, linear, and constant in optimal states \( x^* \),~given~by
\begin{align*}
\mathcal{Q}(t, x^*; d) & = (x^*)^\top \Big[\frac12Q(t) - H^\top(t) R^{-1}(t) \cbr{H(t) + B^\top(t) S(t)} + \frac12\cbr{H(t) + B^\top(t) S(t)}^\top R^{-1}(t)\cbr{H(t) + B^\top(t) S(t)} \\
& \quad + S(t) A(t) - S(t) B(t) R^{-1}(t) \cbr{H(t) + B^\top(t) S(t)} \Big] x^*, \\
\mathcal{L}(t, x^*; d) & = (x^*)^\top \Big[
-H^\top(t) R^{-1}(t) \cbr{B^\top(t) v(t) + W^\top(t) d(t)}
+ G^\top(t) d(t) - \cbr{H(t) + B^\top(t) S(t)}^\top R^{-1}(t) W^\top(t) d(t) \\
&\quad  + \cbr{H(t) + B^\top(t) S(t)}^\top R^{-1}(t) \cbr{B^\top(t) v(t) + W^\top(t) d(t)} - S(t) B(t) R^{-1}(t) \cbr{B^\top(t) v(t) + W^\top(t) d(t)} \\
&\quad + S(t) C(t) d(t) + A^\top(t) v(t) - \cbr{H(t) + B^\top(t) S(t)}^\top R^{-1}(t) B^\top(t) v(t)\Big], \\
\mathcal{C}(t, x^*; d) & = \frac12\cbr{W^\top(t) d(t) + B^\top(t) v(t)}^\top R^{-1}(t) \cbr{W^\top(t) d(t) + B^\top(t) v(t)} + d^\top(t) C^\top(t) v(t) \\
& \quad - \cbr{W^\top(t) d(t) + B^\top(t) v(t)}^\top R^{-1}(t) W^\top(t) d(t)   - \cbr{W^\top(t) d(t) + B^\top(t) v(t)}^\top R^{-1}(t) B^\top(t) v(t).
\end{align*}
By \eqref{eqn:quadratic-ansatz}, we compute the time derivative of the value function as
\begin{equation*}
\frac{\partial J^*(t,x)}{\partial t} = \frac12x^\top \dot{S}(t)x + x^\top \dot{v}(t) + \frac12\dot{z}(t).
\end{equation*}
Now, we substitute the above expression into the HJB equation and obtain
\begin{equation*}
\frac{\partial J^*(t, x^*)}{\partial t} + \mathcal{Q}(t, x^*; d) + \mathcal{L}(t, x^*; d) + \mathcal{C}(t, x^*; d) = 0.
\end{equation*}
By matching terms according to their degree in $x^*$, a sufficient condition for this identity to hold for all $x^*$ is that each group of terms is $0$ independently. This yields a system of differential equations for $S$, $v$, and $z$. We now present the evolution equations for $S$ and $v$, which are responsible for determining the feedback control and closed-loop dynamics. The equation for $z$ is omitted $(0.5\dot{z}(t) + \mathcal{C}(t, x^*; d) = 0)$, as it does not influence the state $x^*$, the adjoint $\lambda^*$, or the optimal control $u^*$. In particular, by requiring $0.5(x^*)^\top \dot{S}(t) x^* + \mathcal{Q}(t, x^*; d) = 0$, we obtain
\begin{multline*}
(x^*)^\top \Big(\frac12\dot{S} +\frac{1}{2} Q - H^\top R^{-1} H - H^\top R^{-1} B^\top S + \frac12H^\top R^{-1} H + \frac12H^\top R^{-1} B^\top S \\
+ \frac12S B R^{-1} H + \frac12S B R^{-1} B^\top S + \frac12A^\top S + \frac12S A - S B R^{-1} H - S B R^{-1} B^\top S \Big) x^* = 0,
\end{multline*}
where we have used the symmetry of \( S \) and the identity \( x^\top S A x = x^\top A^\top S x \). Simplifying this expression and canceling the terms, applying \eqref{subeqn:minimization-pde1}, we have $S(T) = Q_T$ and
\begin{equation}\label{nequ:3}
\dot{S} 
+ Q - H^\top R^{-1} H - H^\top R^{-1} B^\top S - S B R^{-1} H
- S B R^{-1} B^\top S + A^\top S + S A = 0,\quad\quad t\in[0, T).
\end{equation}
Similarly, for the linear terms in $x^*$, by requiring $(x^*)^\top \dot{v}(t) + \mathcal{L}(t, x^*; d) = 0$, we obtain
\begin{multline*}
(x^*)^\top \Big(\dot{v}(t) + \cbr{A(t) - B(t) R^{-1}(t) H(t) - B(t) R^{-1}(t) B^\top(t) S(t)}^\top v(t) \\
+ \cbr{G(t) + C^\top(t) S(t) - W(t) R^{-1}(t) (B^\top(t) S(t) + H(t))}^\top d(t) \Big) = 0.
\end{multline*}
Therefore, we apply \eqref{subeqn:minimization-pde1}, let $v(T) = G_T^\top d_T$, and have
\begin{multline}\label{nequ:2}
\dot{v}(t) + \cbr{A(t) - B(t) R^{-1}(t) H(t) - B(t) R^{-1}(t) B^\top(t) S(t)}^\top v(t) \\
+ \cbr{G(t) + C^\top(t) S(t) - W(t) R^{-1}(t) (B^\top(t) S(t) + H(t))}^\top d(t) = 0,\quad\quad t\in[0, T).
\end{multline}
Combining \eqref{nequ:4}, \eqref{eqn:explicitoptimalcontrol}, \eqref{nequ:3}, and \eqref{nequ:2} together, we complete the proof.

\section{Proof of~\Cref{proposition:mainprop}}  \label{appendix:proof-main-proposition} 

The existence and uniqueness of the solution $S$ is known from Lemma \ref{lemma:riccati-positive-definite}. As a preparation result, we review a controllability result for a closely related system, whose properties will become relevant in establishing the lower bound of the matrix Riccati solution.

\begin{lemma} \label{lemma:dual-controllability}
Let $A, B, H, R$ be the matrices satisfying~\Cref{assumption:matrices-are-bounded}--\ref{assumption:matrices-are-bounded-below}, then the pair $(-(A - BR^{-1}H)^{\top}, I)$ satisfies UCC on $[0, T - \sigma]$ in the sense of~\Cref{assumption:uniform-controllability}. In particular, there exist constants~$\tilde{\alpha}_0(\sigma)$, $\tilde{\alpha}_1(\sigma)$, $\tilde{\beta}_0(\sigma)$, $\tilde{\beta}_1(\sigma)>0$, depending on $\sigma$ from~\Cref{assumption:uniform-controllability} but independent of $T$, such that for every~$t_0\in [0,T-\sigma]$, the choice $t_1=t_0+\sigma\in [0,T]$ ensures that the following bounds hold:
\begin{align*}
\tilde{\alpha}_0(\sigma) I \preceq W_{-(A+BR^{-1}H)^{\top}, I}&(t_0, t_1) \preceq \tilde{\alpha}_1(\sigma) I, \\
\tilde{\beta}_0(\sigma) I \preceq \Phi_{-(A+BR^{-1}H)^{\top}}(t_1, t_0) W_{-(A+BR^{-1}H)^{\top}, I}&(t_0, t_1) \Phi_{-(A+BR^{-1}H)^{\top}}^\top(t_1, t_0) \preceq \tilde{\beta}_1(\sigma) I.
\end{align*}	
(The expressions of $\tilde{\alpha}_0(\sigma), \tilde{\alpha}_1(\sigma), \tilde{\beta}_0(\sigma), \tilde{\beta}_1(\sigma) > 0$ are provided in \eqref{eqn:upper-lower-bounds-adjoint-gramian} and~\eqref{eqn:lower-upper-bound-gramian-phi}.)
\end{lemma}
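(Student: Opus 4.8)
The plan is to reduce this to \Cref{lemma:ucc-carries-over} plus an elementary observation about the controllability Gramian of a pair $(M, I)$. First I would write $M(t) := -(A(t) - B(t)R^{-1}(t)H(t))^\top$ and observe that, by Assumptions~\ref{assumption:matrices-are-bounded} and~\ref{assumption:matrices-are-bounded-below}, the matrix $R^{-1}(t)H(t)$ is continuous and uniformly bounded (since $R(t)\succeq \gamma_R I$ gives $\|R^{-1}\|_\infty \le 1/\gamma_R$ and $\|H\|_\infty\le \lambda_H$), hence $\|A - BR^{-1}H\|_\infty \le \lambda_A + \lambda_B\lambda_H/\gamma_R =: \lambda_M$, with $\lambda_M$ independent of $T$. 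Therefore $M$ is continuous with $\|M\|_\infty \le \lambda_M$. The key point is that the \emph{pair} $(M, I)$ is trivially uniformly completely controllable on any interval: its controllability Gramian is
\begin{equation*}
W_{M,I}(t_0,t_1) = \int_{t_0}^{t_1} \Phi_M(t_0,s)\Phi_M^\top(t_0,s)\,ds,
\end{equation*}
which is symmetric positive definite whenever $t_1>t_0$ because the integrand is positive definite for each $s$ (as $\Phi_M(t_0,s)$ is invertible). So UCC holds for $(M,I)$ directly, without appealing to \Cref{assumption:uniform-controllability} for $(A,B)$; I would simply take $t_1 = t_0+\sigma$ for every $t_0\in[0,T-\sigma]$.

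Next I would extract explicit bounds. Using \cite[Theorem 5.2(i)]{Pazy1983Semigroups} exactly as in \eqref{nequ:7}, $\|\Phi_M(t,s)\| \le e^{\lambda_M|t-s|}$, and the same $\|I\| = \|\Phi_M\Phi_M^{-1}\|$ argument as in the proof of \Cref{lemma:ucc-carries-over} gives $\|\Phi_M(t,s)\| \ge e^{-\lambda_M|t-s|}$. For the upper bound on the Gramian,
\begin{equation*}
v^\top W_{M,I}(t_0,t_0+\sigma)v = \int_{t_0}^{t_0+\sigma}\|\Phi_M^\top(t_0,s)v\|^2\,ds \le \|v\|^2\int_{t_0}^{t_0+\sigma}e^{2\lambda_M(s-t_0)}\,ds = \frac{e^{2\lambda_M\sigma}-1}{2\lambda_M}\|v\|^2,
\end{equation*}
so $\tilde\alpha_1(\sigma) = (e^{2\lambda_M\sigma}-1)/(2\lambda_M)$; for the lower bound, $\|\Phi_M^\top(t_0,s)v\| = \|\Phi_M^{-\top}(s,t_0)^{-1}\cdots\|$ — more cleanly, $\|\Phi_M(t_0,s)w\| \ge e^{-\lambda_M|s-t_0|}\|w\|$ for all $w$ (since $\|\Phi_M(s,t_0)\| \le e^{\lambda_M|s-t_0|}$ bounds the inverse map), so the integrand is at least $e^{-2\lambda_M\sigma}\|v\|^2$ on $[t_0,t_0+\sigma]$, giving $\tilde\alpha_0(\sigma) = \sigma e^{-2\lambda_M\sigma}$. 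Finally the $\Phi_M(t_1,t_0)W_{M,I}\Phi_M^\top(t_1,t_0)$ bounds follow by sandwiching with $\|\Phi_M(t_1,t_0)\|^{\pm 1}$ exactly as in \eqref{eqn:define-gramian-phi-upper-lower-bound}: $\tilde\beta_0(\sigma) = \tilde\alpha_0(\sigma)e^{-2\lambda_M\sigma}$ and $\tilde\beta_1(\sigma) = \tilde\alpha_1(\sigma)e^{2\lambda_M\sigma}$. These are the expressions referenced as \eqref{eqn:upper-lower-bounds-adjoint-gramian} and \eqref{eqn:lower-upper-bound-gramian-phi}, all independent of $T$.

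I expect there is essentially no hard obstacle here: the only subtlety is noticing that one should \emph{not} try to push the perturbed-Gramian machinery of \Cref{lemma:ucc-carries-over} through $(-(A-BR^{-1}H)^\top, I)$ by treating $I$ as "$B$" and some feedback as "$F$," since the control matrix is the identity and controllability is immediate. The mild bookkeeping point to get right is the sign/transpose in $M = -(A-BR^{-1}H)^\top$ and the fact that replacing $A-BR^{-1}H$ by its negative transpose changes none of the norm bounds (operator norm is invariant under transpose, and $\|{-}N\|=\|N\|$), so $\lambda_M$ is the correct uniform bound for $M$. I would close by noting that the statement restricts to $[0,T-\sigma]$ precisely so that $t_1 = t_0+\sigma \le T$ stays in range, mirroring \Cref{assumption:uniform-controllability}.
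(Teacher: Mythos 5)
Your proposal is correct and follows essentially the same route as the paper: both directly compute $v^\top W_{M,I}(t_0,t_0+\sigma)v=\int_{t_0}^{t_0+\sigma}\|\Phi_M^\top(t_0,s)v\|^2ds$ and sandwich the integrand using the exponential upper bound $\|\Phi_M(t,s)\|\le e^{\lambda_M(t-s)}$ together with the pointwise lower bound obtained from the inverse operator (your justification of that lower bound via $\|w\|\le\|\Phi_M(s,t_0)\|\,\|\Phi_M(t_0,s)w\|$ is in fact the cleaner version of the step the paper attributes to \eqref{eqn:lower-bound-of-evolution-operator}). The only cosmetic difference is your $\tilde\alpha_0(\sigma)=\sigma e^{-2\lambda_M\sigma}$ versus the paper's $\bigl(1-e^{-2\lambda_M\sigma}\bigr)/(2\lambda_M)$ from integrating exactly; both are valid and $T$-independent.
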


\begin{proof} 
By the definition of controllability Gramian in~\eqref{eqn:definegrammain}, we have
\begin{align*}
W_{-(A+BR^{-1}H)^{\top}, I}(t_0,t_1) &= \int_{t_0}^{t_1}\Phi_{-(A+BR^{-1}H)^{\top}}(t_0,s)\Phi^{\top}_{-(A+BR^{-1}H)^{\top}}(t_0,s)ds \\
& = \int_{t_0}^{t_1}\Phi^{\top}_{A+BR^{-1}H}(s,t_0)\Phi_{A+BR^{-1}H}(s,t_0)ds.
\end{align*}
Let $0\neq v\in \rr^{n_x}$ be any vector, we first consider the upper bound and have
\begin{align*} 
v^{\top}W_{-(A+BR^{-1}H)^{\top},I}(t_0,t_1)v & = 
\int_{t_0}^{t_1}v^{\top}\Phi_{A+BR^{-1}H}^{\top}(s,t_0)\Phi_{A+BR^{-1}H}(s,t_0)vds = \int_{t_0}^{t_1}\|\Phi_{A+BR^{-1}H}(s,t_0)v\|^2ds \\
&= \int_{t_0}^{t_0+\sigma}\|\Phi_{A+BR^{-1}H}(s,t_0)v\|^2ds \le
\int_{t_0}^{t_0+\sigma}e^{2\left(\lambda_A+\lambda_B\lambda_H/\gamma_R\right)(s-t_0)}\|v\|^2ds \\
& = \frac{\|v\|^2}{2(\lambda_A + \lambda_B \lambda_H / \gamma_R)} \left( e^{2(\lambda_A + \lambda_B \lambda_H / \gamma_R)\sigma} - 1 \right),
\end{align*}
where the inequality is due to \eqref{nequ:7}. For the lower bound, for any $0\neq v\in \rr^{n_x}$, we have by the bound derived in~\eqref{eqn:lower-bound-of-evolution-operator} that 
\begin{align*}
v^{\top}W_{-(A+BR^{-1}H)^{\top},I}& (t_0,t_1)v = \int_{t_0}^{t_1}\|\Phi_{A+BR^{-1}H}(s,t_0)v\|^2ds \ge \int_{t_0}^{t_1}e^{-2\left(\lambda_A+\lambda_B\lambda_H/\gamma_R\right)(s-t_0)}\|v\|^2ds \\
& = \int_{t_0}^{t_0+\sigma}e^{-2\left(\lambda_A+\lambda_B\lambda_H/\gamma_R\right)(s-t_0)}\|v\|^2ds  =\frac{\|v\|^2}{2(\lambda_A + \lambda_B \lambda_H / \gamma_R)} \left( 1 - e^{-2(\lambda_A + \lambda_B \lambda_H / \gamma_R)\sigma} \right).
\end{align*} 
Combining the above two displays and defining
\begin{subequations} \label{eqn:upper-lower-bounds-adjoint-gramian}
\begin{align}
\tilde{\alpha}_0(\sigma) &:= \frac{1}{2(\lambda_A + \lambda_B \lambda_H / \gamma_R)} \left( 1 - e^{-2(\lambda_A + \lambda_B \lambda_H / \gamma_R)\sigma}\right),\\
\tilde{\alpha}_1(\sigma) &:= \frac{1}{2(\lambda_A + \lambda_B \lambda_H / \gamma_R)} \left( e^{2(\lambda_A + \lambda_B \lambda_H / \gamma_R)\sigma} - 1 \right),
\end{align} 
\end{subequations}		
we prove the first part of the statement. Furthermore, we note from \eqref{nequ:7} and \eqref{eqn:lower-bound-of-evolution-operator} that for any $0\neq v \in\rr^{n_x}$,
\begin{align*}
v^{\top}\Phi_{-(A+BR^{-1}H)^{\top}}(t_1, t_0) & W_{-(A+BR^{-1}H)^{\top}, I}(t_0, t_1) \Phi_{-(A+BR^{-1}H)^{\top}}^\top(t_1, t_0)v \\
& \le \tilde{\alpha}_1(\sigma)\|\Phi_{-(A+BR^{-1}H)^{\top}}^\top(t_1, t_0)v\|^2 \le \tilde{\alpha}_1(\sigma)e^{2\left(\lambda_A+\lambda_B\lambda_H/\gamma_R\right)\sigma}\|v\|^2,
\end{align*}
and
\begin{align*}
v^{\top}\Phi_{-(A+BR^{-1}H)^{\top}}(t_1, t_0) &W_{-(A+BR^{-1}H)^{\top}, I}(t_0, t_1) \Phi_{-(A+BR^{-1}H)^{\top}}^\top(t_1, t_0)v \\
&\ge\tilde{\alpha}_0(\sigma)\|\Phi_{-(A+BR^{-1}H)^{\top}}^\top(t_1, t_0)v\|^2 \ge \tilde{\alpha}_0(\sigma)e^{-2\left(\lambda_A+\lambda_B\lambda_H/\gamma_R\right)\sigma}\|v\|^2.
\end{align*}
Thus, we define
\begin{equation}\label{eqn:lower-upper-bound-gramian-phi}
\tilde{\beta}_0(\sigma) \coloneqq \tilde{\alpha}_0(\sigma)e^{-2\left(\lambda_A+\lambda_B\lambda_H/\gamma_R\right)\sigma} \quad\quad \text{ and }\quad\quad
\tilde{\beta}_1(\sigma) \coloneqq \tilde{\alpha}_1(\sigma)e^{2\left(\lambda_A+\lambda_B\lambda_H/\gamma_R\right)\sigma},
\end{equation}	
and conclude the proof.
\end{proof}

\begin{remark}
We note that the statement of~\Cref{lemma:dual-controllability} is slightly weaker than those of \Cref{assumption:uniform-controllability} and \Cref{lemma:ucc-carries-over} in that at any time $t_0\in[0, T-\sigma]$, any state $v\in\mathbb{R}^{n_x}$ is steerable to $0$ precisely at time $t_0+\sigma$ for~the system $(-(A+BF)^{\top},I)$ rather than some other time $t_1<t_0+\sigma$.
However, since we only require the existence of a time in $[t_0, t_0+\sigma]$ for any $t_0\in [0,T-\sigma]$, the choice $t_1=t_0+\sigma$ remains valid and does not impact the proof of~\Cref{proposition:mainprop}.
\end{remark}

We now present the main proof of~\Cref{proposition:mainprop}. In what follows, we first establish the desired bounds on the controllable segment $[0, T - \sigma]$. 

\vskip4pt

\noindent $\bullet$ \textbf{Case 1:} $t_0\in[0, T-\sigma]$. By Lemmas \ref{lemma:controllabilitygramian} and \ref{lemma:ucc-carries-over}, we know for any initial time $t_0 \in [0, T - \sigma]$ and any initial~state $0\neq x_{t_0} \in \mathbb{R}^{n_x}$, there exists a control input that steers the system $(A-BR^{-1}H, B)$ to the origin by some final time $t_1 \in [t_0,t_0+\sigma]\subseteq[0,T]$. An explicit control that accomplishes this is given by the following
\begin{equation} \label{eqn:explicitsteeringcontrol}
u(t;x_{t_0}) = \begin{cases} 
-B^\top(t)\Phi_{A - BR^{-1}H}^\top(t_0, t) \, W_{A - BR^{-1}H, B}^{-1}(t_0, t_1)x_{t_0}, & t \in [t_0, t_1], \\
0, & t\in (t_1,T].
\end{cases}
\end{equation}
Let us denote $x(t; x_{t_0})$, $t\in[t_0, T]$, to be state trajectory of the system $(A-BR^{-1}H, B)$ with the above control input \eqref{eqn:explicitsteeringcontrol}; and denote $x^*: [t_0,T]\rightarrow \rr^{n_x}$ and $u^*: [t_0,T]\rightarrow \rr^{n_u}$ to be the optimal state and \mbox{control}~for~the~truncated, shifted linear-quadratic problem~\eqref{eqn:riccati-equivalent-problem} on $[t_0, T]$ with initial state $x_{t_0}$. Then, by~the~definition of~cost-to-go (an analogy of \eqref{eqn:optimal-cost-to-go} with different system, and an analogy of \eqref{eqn:quadratic-ansatz} with $v(t)=0$, $z(t)=0$), we have
\begin{equation*}
\frac{1}{2}x_{t_0}^\top S(t_0) x_{t_0} =  \frac12\int_{t_0}^{T}
\left(x^{*\top}(t) \big(Q(t) - H^\top(t) R^{-1}(t) H(t)\big) x^*(t) + u^{*\top}(t) R(t) u^*(t)\right) dt + \frac12x^{*\top}(T) Q_T x^*(T).
\end{equation*}
Thus, by the optimality of $(x^*, u^*)$ on $[t_0, T]$, we have
\begin{align*}
\frac{1}{2}x_{t_0}^\top S(t_0) x_{t_0} & \leq \frac12\int_{t_0}^T 
\begin{bmatrix}
x(t;x_{t_0}) \\ u(t;x_{t_0})
\end{bmatrix}^\top
\begin{bmatrix}
Q(t) - H^\top(t) R^{-1}(t) H(t) & 0 \\
0 & R(t)
\end{bmatrix}
\begin{bmatrix}
x(t;x_{t_0}) \\ u(t;x_{t_0})
\end{bmatrix} dt 
+ \frac12x^\top(T;x_{t_0}) Q_T x(T;x_{t_0})\\
& = \frac12\int_{t_0}^{t_1} x^\top(t;x_{t_0}) \big(Q(t) - H^\top(t) R^{-1}(t) H(t)\big) x(t;x_{t_0})
+ u^\top(t;x_{t_0}) R(t) u(t;x_{t_0})dt,
\end{align*}
where we use the fact that $u(t, x_{t_0})=0$ and $x(t; x_{t_0}) = 0$ for all $t > t_1$. For the second term on the right~hand~side, we have for $t\in[t_0, t_1]$,
\begin{align}\label{nequ:10}
& u^\top(t;x_{t_0}) R(t) u(t;x_{t_0})  \leq \lambda_R\|u(t;x_{t_0})\|^2 \stackrel{\eqref{eqn:explicitsteeringcontrol}}{\leq}\lambda_R \|B(t)\|^2  \|\Phi_{A - BR^{-1}H}(t_0, t)\|^2  \|W_{A - BR^{-1}H, B}^{-1}(t_0, t_1)\|^2  \|x_{t_0}\|^2 \nonumber \\
&\quad\quad\quad \leq \frac{\lambda_R\lambda_B^2}{(\alpha_0'(\sigma))^2}\|x_{t_0}\|^2\|\Phi_{(A - BR^{-1}H)^\top}(t, t_0)\|^2 \stackrel{\eqref{nequ:7}}{\leq}\frac{\lambda_R\lambda_B^2}{(\alpha_0'(\sigma))^2}e^{2\left(\lambda_A+\lambda_B\lambda_H/\gamma_R\right)(t-t_0)}\|x_{t_0}\|^2,
\end{align}
where the third inequality is due to \Cref{lemma:ucc-carries-over}. For the first term on the right hand side, we substitute~the~control \eqref{eqn:explicitsteeringcontrol} into \eqref{eqn:mild-solution}, and have for $t\in[t_0, t_1]$,
\begin{align}\label{nequ:11}
\|x(t; x_{t_0})\| & = \nbr{\Phi_{A - BR^{-1}H}(t, t_0)\left( I - W_{A - BR^{-1}H, B}(t_0, t) W_{A - BR^{-1}H, B}^{-1}(t_0, t_1) \right)x_{t_0}} \nonumber \\
& \stackrel{\mathclap{\eqref{eqn:gramian-can-only-grow}}}{\leq} \rbr{1 + \frac{\alpha'_1(\sigma)}{\alpha'_0(\sigma)}}\|x_{t_0}\|\cdot \|\Phi_{A - BR^{-1}H}(t, t_0)\| \stackrel{\eqref{nequ:7}}{\leq} \rbr{1 + \frac{\alpha'_1(\sigma)}{\alpha'_0(\sigma)}} e^{\left(\lambda_A+\lambda_B\lambda_H/\gamma_R\right)(t-t_0)}\|x_{t_0}\|,
\end{align}
where the second inequality is also due to \Cref{lemma:ucc-carries-over}. Combining the above three displays together, and~noting that $\|Q(t) - H^\top(t) R^{-1}(t) H(t)\|\leq \lambda_Q + \lambda_H^2/\gamma_R$, we obtain
\begin{align}\label{nequ:13}
x_{t_0}^\top S(t_0) x_{t_0} & \leq \cbr{\left(\lambda_Q + \frac{\lambda_H^2}{\gamma_R} \right)\rbr{1 + \frac{\alpha'_1(\sigma)}{\alpha'_0(\sigma)}}^2 + \frac{\lambda_R\lambda_B^2}{(\alpha_0'(\sigma))^2}}\|x_{t_0}\|^2 \int_{t_0}^{t_1}  e^{2\left(\lambda_A+\lambda_B\lambda_H/\gamma_R\right)(t-t_0)} dt \nonumber\\
& \leq \cbr{\left(\lambda_Q + \frac{\lambda_H^2}{\gamma_R} \right)\rbr{1 + \frac{\alpha'_1(\sigma)}{\alpha'_0(\sigma)}}^2 + \frac{\lambda_R\lambda_B^2}{(\alpha_0'(\sigma))^2}}\|x_{t_0}\|^2\cdot \frac{e^{2\left(\lambda_A+\lambda_B\lambda_H/\gamma_R\right)\sigma} - 1}{2(\lambda_A+\lambda_B\lambda_H/\gamma_R)}.
\end{align}
To establish the lower bound, we know from \Cref{lemma:riccati-positive-definite} that the Riccati matrix $S$ remains positive definite for all $t\in [0,T]$. Thus, let us define $U(t) \coloneqq S^{-1}(t)$ and have $\dot{U}(t) = -S^{-1}(t) \dot{S}(t) S^{-1}(t)$. Substituting this~relation into the Riccati equation~\eqref{eqn:riccati-equation}, we have
\begin{subequations} \label{eqn:inversericcatiequation}
\begin{align} 
\dot{U}(t) &= \big(A(t) - B(t) R^{-1}(t) H(t)\big) U(t) + U(t) \big(A(t) - B(t) R^{-1}(t) H(t)\big)^\top \notag \\
&\quad + U(t)\big(Q(t) - H^\top(t) R^{-1}(t) H(t)\big) U(t) - B(t) R^{-1}(t) B^\top(t), \quad t\in [0,T),\\
U(T) &= Q_T^{-1}.
\end{align}
\end{subequations}
Note that \eqref{eqn:inversericcatiequation} itself is a matrix Riccati equation. The existence and uniqueness follow from that of $S$. This Riccati equation corresponds to a dual linear-quadratic optimal control problem on the interval $[0, T]$,~formulated as \cite[(3.12-14)]{Kirk2004Optimal}
\begin{subequations}\label{nequ:12}
\begin{align}
\min_{y(\cdot),\, v(\cdot)} \;\; & \frac{1}{2}\int_{0}^T \begin{bmatrix}
y(t)\\
v(t)
\end{bmatrix}^\top\begin{bmatrix}
B(t) R^{-1}(t) B^\top(t)  & 0\\
0 & (Q(t) - H^\top(t) R^{-1}(t) H(t))^{-1}
\end{bmatrix}\begin{bmatrix}
y(t)\\
v(t)
\end{bmatrix} dt + \frac{1}{2}y^\top(T) Q_T^{-1} y(T), \nonumber \\
\text{s.t. } \;\;\;\;  & \dot{y}(t) = -\big(A(t) - B(t) R^{-1}(t) H(t)\big)^\top y(t) + v(t), \quad t\in (0,T],\\
& y(t_0) = y_0\in\mathbb{R}^{n_x}.
\end{align}
\end{subequations}
The state dynamics are now given by the pair $(-(A - BR^{-1}H)^{\top}, I)$, and the analysis of this system mirrors~that of the shifted OCP in~\Cref{theorem:shifted-ocp}. In particular, for any $t_0\in [0,T-\sigma]$, we apply~\Cref{lemma:dual-controllability} to construct~an~explicit zero-steering control over the horizon $[t_0, t_0+\sigma]$ that drives any initial state $0\neq y_{t_0}\in \rr^{n_x}$ to the origin~within time $\sigma$ from $t_0$, given by
\begin{equation*}
v(t; y_{t_0}) \coloneqq \begin{cases}
-\Phi_{-(A - BR^{-1}H)^{\top}}^{\top}(t_0, t) \, W_{-(A - BR^{-1}H)^{\top}, I}^{-1}(t_0, t_0+\sigma) \, y_{t_0}, & t \in[t_0, t_0+\sigma], \\
0, & t\in (t_0+\sigma, T].
\end{cases}
\end{equation*} 
The corresponding state with the above control trajectory is given by \eqref{eqn:mild-solution}: for $t\in [t_0,t_0+\sigma]$,
\begin{equation*}
y(t; y_{t_0}) = \Phi_{-(A - BR^{-1}H)^{\top}}(t, t_0) \left(I - W_{-(A - BR^{-1}H)^{\top}, I}(t_0, t) \, W_{-(A - BR^{-1}H)^{\top}, I}^{-1}(t_0, t_0+\sigma)\right) y_{t_0},
\end{equation*}
and for $t\in(t_0+\sigma, T]$, $y(t; y_{t_0}) = 0$. With the above specific state-control trajectories, we have
\begin{equation*}
\frac{1}{2} y_{t_0}^{\top} U(t_0) y_{t_0} \leq
\frac{1}{2}\int_{t_0}^{t_0+\sigma}
\begin{bmatrix}
y(t;y_{t_0})\\
v(t; y_{t_0})
\end{bmatrix}^{\top}
\begin{bmatrix}
B(t)R^{-1}(t)B^{\top}(t) & 0 \\
0 & \left(Q(t) - H^{\top}(t) R^{-1}(t) H(t)\right)^{-1}
\end{bmatrix}\begin{bmatrix}
y(t;y_{t_0})\\
v(t; y_{t_0})
\end{bmatrix} dt.
\end{equation*}
Following the same derivation as in \eqref{nequ:10}, we have from Assumption \ref{assumption:matrices-are-bounded-below} and \Cref{lemma:dual-controllability} that
\begin{equation*}
v^\top(t;y_{t_0}) (Q(t) - H^{\top}(t) R^{-1}(t) H(t))^{-1} v(t;y_{t_0}) \leq \frac{1}{\gamma_Q(\tilde{\alpha}_0(\sigma))^2}e^{2\left(\lambda_A+\lambda_B\lambda_H/\gamma_R\right)(t-t_0)}\|y_{t_0}\|^2.
\end{equation*}
Following the same derivation as in \eqref{nequ:11}, we have
\begin{equation*}
\|y(t;y_{t_0})\| \leq \left(1+\frac{\tilde{\alpha}_1(\sigma)}{\tilde{\alpha}_0(\sigma)}
\right)\|y_{t_0}\|e^{\left(\lambda_A+\lambda_B\lambda_H/\gamma_R\right)(t-t_0)}.
\end{equation*}
Combining the above three displays, we obtain
\begin{align}\label{nequ:14}
y_{t_0}^{\top} S^{-1}(t_0) y_{t_0}  = y_{t_0}^{\top} U(t_0) y_{t_0} & \leq \cbr{\frac{\lambda_B^2}{\gamma_R}\left(1+\frac{\tilde{\alpha}_1(\sigma)}{\tilde{\alpha}_0(\sigma)}\right)^2 + \frac{1}{\gamma_Q(\tilde{\alpha}_0(\sigma))^2}}\|y_{t_0}\|^2\int_{t_0}^{t_0+\sigma}e^{2\left(\lambda_A+\lambda_B\lambda_H/\gamma_R\right)(t-t_0)} dt \nonumber\\ 
& = \cbr{\frac{\lambda_B^2}{\gamma_R}\left(1+\frac{\tilde{\alpha}_1(\sigma)}{\tilde{\alpha}_0(\sigma)}\right)^2 + \frac{1}{\gamma_Q(\tilde{\alpha}_0(\sigma))^2}}\|y_{t_0}\|^2\cdot\frac{e^{2\left(\lambda_A+\lambda_B\lambda_H/\gamma_R\right)\sigma} - 1}{2(\lambda_A+\lambda_B\lambda_H/\gamma_R)}.
\end{align}
\noindent$\bullet$ \textbf{Case 2:} $t_0\in(T-\sigma, T]$. We now discuss the case without uniform controllability. The bounds will similarly be derived from the relationship between the Riccati matrix and optimal cost, and without the ability to~steer an arbitrary initial state at $t_0$ to $0$ due to the lack of complete controllability, we simply apply the zero control trajectory $\tilde{u}(t) = 0$, $t\in[t_0, T]$. With the zero control, the state trajectory evolves~according~to~the~homogeneous dynamics, i.e., $\tilde{x}(t) = \Phi_{A - B R^{-1} H}(t, t_0) x_{t_0}$, $t\in [t_0,T]$. With the above state-control trajectories $(\tilde{x}, \tilde{u})$, we have
\begin{align}\label{nequ:15}
x_{t_0}^{\top} S(t_0) x_{t_0} & \leq \int_{t_0}^{T}
\begin{bmatrix}
\tilde{x}(t) \\ \tilde{u}(t)
\end{bmatrix}^{\top}\begin{bmatrix}
Q(t) - H^{\top}(t) R^{-1}(t) H(t) & 0 \\
0 & R(t)
\end{bmatrix}\begin{bmatrix}
\tilde{x}(t) \\ \tilde{u}(t)
\end{bmatrix} dt + \tilde{x}(T)^{\top} Q_T\tilde{x}(T) \nonumber\\
& \le \left( \lambda_Q + \frac{\lambda_H^2}{\gamma_R} \right) \int_{t_0}^{T} \|\tilde{x}(t)\|^2\, dt 	+ \lambda_Q\|\tilde{x}(T)\|^2 \nonumber\\
& \stackrel{\mathclap{\eqref{nequ:7}}}{\leq} \left( \lambda_Q + \frac{\lambda_H^2}{\gamma_R} \right)\|x_{t_0}\|^2\int_{t_0}^Te^{2\left(\lambda_A +{\lambda_B \lambda_H}/{\gamma_R}\right)(t-t_0)} dt + \lambda_Q\|x_{t_0}\|^2e^{2\left(\lambda_A +{\lambda_B \lambda_H}/{\gamma_R}\right)(T-t_0)}  \nonumber\\
& \leq \left( \lambda_Q + \frac{\lambda_H^2}{\gamma_R} \right)\|x_{t_0}\|^2 \frac{e^{2\left(\lambda_A+\lambda_B\lambda_H/\gamma_R\right)\sigma} - 1}{2(\lambda_A+\lambda_B\lambda_H/\gamma_R)} + \lambda_Q\|x_{t_0}\|^2e^{2\left(\lambda_A +{\lambda_B \lambda_H}/{\gamma_R}\right)\sigma} \nonumber\\
& \leq \left( \lambda_Q + \frac{\lambda_H^2}{\gamma_R} \right)\rbr{1+\frac{1}{2(\lambda_A+\lambda_B\lambda_H/\gamma_R)}} e^{2\left(\lambda_A +{\lambda_B \lambda_H}/{\gamma_R}\right)\sigma}\cdot\|x_{t_0}\|^2.
\end{align}
To obtain the lower bound, we consider again the inverse~\mbox{Riccati}~\mbox{equation}~\eqref{eqn:inversericcatiequation}~and~\mbox{corresponding}~\mbox{optimal}~control problem \eqref{nequ:12}. In particular, for any $t_0\in (T-\sigma,T]$ and any $y_{t_0}\in\rr^{n_x}$, we apply the zero control~$\tilde{v}(t) = 0$, $t\in [t_0,T]$, and the state trajectory becomes $\tilde{y}(t) = \Phi_{-(A - B R^{-1} H)^{\top}}(t, t_0) y_{t_0}$. Therefore,~the~bound~of~$U(t) = S^{-1}(t)$ on $(T-\sigma, T]$ becomes 
\begin{align}\label{nequ:16}
& y_{t_0}^{\top} S^{-1}(t_0) y_{t_0}  = y_{t_0}^{\top} U(t_0) y_{t_0} \nonumber\\
& \leq \int_{t_0}^T \begin{bmatrix}
\tilde{y}(t)\\
\tilde{v}(t)
\end{bmatrix}^\top\begin{bmatrix}
B(t) R^{-1}(t) B^\top(t)  & 0\\
0 & (Q(t) - H^\top(t) R^{-1}(t) H(t))^{-1}
\end{bmatrix}\begin{bmatrix}
\tilde{y}(t)\\
\tilde{v}(t)
\end{bmatrix}dt + \tilde{y}^\top(T) Q_T^{-1} \tilde{y}(T) \nonumber\\
& \leq \frac{\lambda_B^2}{\gamma_R}\int_{t_0}^{T}\|\tilde{y}(t)\|^2 dt + \frac{1}{\gamma_Q}\|\tilde{y}(T)\|^2 \stackrel{\mathclap{\eqref{nequ:7}}}{\leq} \frac{\lambda_B^2}{\gamma_R}\|y_{t_0}\|^2\int_{t_0}^{T}e^{2 \left( \lambda_A + {\lambda_B\lambda_H}/{\gamma_R} \right)(t-t_0)} dt + \frac{\|y_{t_0}\|^2}{\gamma_Q}e^{2 \left( \lambda_A + {\lambda_B\lambda_H}/{\gamma_R} \right)(T-t_0)} \nonumber\\
& \leq \cbr{\frac{\lambda_B^2}{2\gamma_R\left( \lambda_A + {\lambda_B\lambda_H}/{\gamma_R}\right)} + \frac{1}{\gamma_Q}}e^{2 \left( \lambda_A + {\lambda_B\lambda_H}/{\gamma_R} \right)\sigma}\cdot\|y_{t_0}\|^2.
\end{align}
Finally, we combine \eqref{nequ:13}, \eqref{nequ:14}, \eqref{nequ:15}, and \eqref{nequ:16}, and define
\begin{subequations}  \label{eqn:final-riccati-bounds}
\begin{align}
c_0(\sigma) \coloneqq \min\Bigg\{ & 2\rbr{\lambda_A+\frac{\lambda_B\lambda_H}{\gamma_R}}\rbr{\frac{\lambda_B^2}{\gamma_R}\left(1+\frac{\tilde{\alpha}_1(\sigma)}{\tilde{\alpha}_0(\sigma)}\right)^2 + \frac{1}{\gamma_Q(\tilde{\alpha}_0(\sigma))^2}}^{-1}, \nonumber \\
& \rbr{\frac{\lambda_B^2}{2\gamma_R\left( \lambda_A + {\lambda_B\lambda_H}/{\gamma_R}\right)} + \frac{1}{\gamma_Q}}^{-1} \Bigg\}\cdot \exp(-2\left( \lambda_A + {\lambda_B\lambda_H}/{\gamma_R} \right)\sigma);\\
c_1(\sigma) \coloneqq \max\Bigg\{ & \rbr{\left(\lambda_Q + \frac{\lambda_H^2}{\gamma_R} \right)\rbr{1 + \frac{\alpha'_1(\sigma)}{\alpha'_0(\sigma)}}^2 + \frac{\lambda_R\lambda_B^2}{(\alpha_0'(\sigma))^2}}\frac{1}{2(\lambda_A+\lambda_B\lambda_H/\gamma_R)}, \nonumber\\
& \left( \lambda_Q + \frac{\lambda_H^2}{\gamma_R} \right)\rbr{1+\frac{1}{2(\lambda_A+\lambda_B\lambda_H/\gamma_R)}} \Bigg\}\exp(2\left( \lambda_A + {\lambda_B\lambda_H}/{\gamma_R} \right)\sigma),
\end{align}
\end{subequations}
where $\alpha'_0(\sigma)$, $\alpha'_1(\sigma)$ are from \Cref{lemma:ucc-carries-over}, and $\tilde{\alpha}_0(\sigma)$, $\tilde{\alpha}_1(\sigma)$ are from \Cref{lemma:dual-controllability}. This completes the proof.


\bibliographystyle{informs2014} 
\bibliography{ref}

\begin{flushright}
\scriptsize \framebox{\parbox{\textwidth}{Government License: The submitted manuscript has been created by UChicago Argonne, LLC, Operator of Argonne National Laboratory (``Argonne"). Argonne, a U.S. Department of Energy Office of Science laboratory, is operated under Contract No. DE-AC02-06CH11357.  The U.S. Government retains for itself, and others acting on its behalf, a paid-up nonexclusive, irrevocable worldwide license in said article to reproduce, prepare derivative works, distribute copies to the public, and perform publicly and display publicly, by or on behalf of the Government. The Department of Energy will provide public access to these results of federally sponsored research in accordance with the DOE Public Access Plan. http://energy.gov/downloads/doe-public-access-plan. }}
\normalsize
\end{flushright}

\end{document}